\documentclass[a4paper,10pt]{extarticle}
\usepackage[margin=2cm]{geometry}
\usepackage[pagebackref=false,linktoc=page,colorlinks=true,linkcolor=blue,citecolor=blue]{hyperref}
\usepackage{algorithmicx}
\usepackage{algorithm}
\usepackage{algpseudocode}
\usepackage{setspace}
\usepackage{epsfig}
\usepackage{psfrag}
\usepackage[retainorgcmds]{IEEEtrantools}
\usepackage{xcolor}
\usepackage{authblk}
\usepackage{latexsym}
\usepackage{amsmath,amssymb,amsfonts,amsthm}
\usepackage{mathrsfs}
\usepackage{bm}
\usepackage{bbm}
\usepackage{breqn}
\usepackage[T1]{fontenc}
\usepackage{pstool, psfrag} 
\usepackage{mathrsfs}

\usepackage{graphicx}
\usepackage{natbib}
\usepackage{pstool,psfrag}
\usepackage{enumitem}
\usepackage[all]{xy}
\usepackage[section]{placeins}

\newtheorem{theorem}{Theorem}
\newtheorem{lemma}{Lemma}
\newtheorem{example}{Example}
\newtheorem{corollary}{Corollary}
\newtheorem{proposition}{Proposition}
\newtheorem{remark}{Remark}
\makeatletter \newcommand{\PR}{\ensuremath{\mathbb{P}}}
\newcommand{\sm}{\ensuremath{\setminus}}
\newcommand\ddfrac[2]{\frac{\displaystyle #1}{\displaystyle #2}}
\newcommand{\RR}{\ensuremath{\mathrm{\mathbb{R}}}}

\newcommand{\EE}{\ensuremath{\mathbb{E}}}
\newcommand{\AAA}{\ensuremath{A}_0}
\renewcommand{\AA}{\ensuremath{A}_1}
\newcommand{\AB}{\ensuremath{A}_2}

\newcommand{\BA}{\ensuremath{B}_1}
\newcommand{\BB}{\ensuremath{B}_2}

\newcommand{\SI}{\ensuremath{\mathbf{S}}}
\newcommand{\NN}{\ensuremath{\mathbb{N}}}
\newcommand{\ZZ}{\ensuremath{\mathbb{Z}}}

\newcommand{\PP}{\ensuremath{\mathbb{P}}}

\newcommand{\eps}{\ensuremath{\varepsilon}}
\newcommand{\borel}{\ensuremath{\mathscr{B}}}
\newcommand{\level}{\ensuremath{v}}
\newcommand{\res}{\ensuremath{z}}
\newcommand{\Res}{\ensuremath{Z}}

\newcommand{\wk}{\ensuremath{\,\overset {\mathrm {w} }{\longrightarrow
    }\,}}
\newcommand{\cind}{\ensuremath{\,\overset {d }{\longrightarrow
    }\,}}
\newcommand{\cinp}{\ensuremath{\,\overset {p }{\longrightarrow
    }\,}}

\newcommand{\scale}{b}
\newcommand{\loc}{a}



\newcommand{\tpo}{\ensuremath{{t}}}

\newcommand{\CommaBin}{\mathbin{\raisebox{0.5ex}{,}}}

\newcommand{\kkminone}{1\,:\,k-1}
\newcommand{\kplusonek}{1\,:\,k}

\newcommand{\afunD}{a}
\newcommand{\bfunD}{b}
\newcommand{\kk}{0\,:\,k-1}
\newcommand{\tk}{t-k\,:\,t-1}

\newcommand{\skb}{-s+1\,:\,-s+k}
\newcommand{\tplusonek}{t-k+1\,:\,t}
\newcommand{\kplusonekplusone}{0\,:\,k}
\newcommand{\jk}{j-k\,:\,j-1}
\newcommand{\tplusonetplusone}{0\,:\,t}

\title{\textbf{Hidden tail chains and recurrence equations
    for dependence parameters associated with extremes of higher-order
    Markov chains}} \author[*]{\textbf{Ioannis
    Papastathopoulos}}
\author[*]{\textbf{Adrian Casey}}
\author[$\dagger$]{\textbf{Jonathan A. Tawn}}
\affil[*]{\small School of Mathematics and Maxwell Institute,
  University of Edinburgh, Edinburgh, EH9 3FD, United Kingdom}
\affil[$\dagger$]{\small Department of Mathematics and Statistics,
  Lancaster University, Lancaster, LA1 4YF, United Kingdom}

\affil[$$]{\small i.papastathopoulos@ed.ac.uk
  $\quad$ acasey2@exseed.ed.ac.uk $
  \quad$ j.tawn@lancaster.ac.uk } 
\date{}
\linespread{1.03}
\begin{document}
\maketitle
\begin{abstract}
  We derive some key extremal features for
  $k$th order Markov chains that can be used to understand how the
  process moves between an extreme state and the body of the process.\
  The chains are studied given that there is an exceedance of a
  threshold, as the threshold tends to the upper endpoint of the
  distribution.\ Unlike previous studies with
  $k>1$, we consider processes where standard limit theory describes
  each extreme event as a single observation without any information
  about the transition to and from the body of the distribution.\ Our
  work uses different asymptotic theory which results in
  non-degenerate limit laws for such processes.\ We study the extremal
  properties of the initial distribution and the transition
  probability kernel of the Markov chain under weak assumptions for
  broad classes of extremal dependence structures that cover both
  asymptotically dependent and asymptotically independent Markov
  chains.\ For chains with
  $k>1$, the transition of the chain away from the exceedance involves
  novel functions of the
  $k$ previous states, in comparison to just the single value, when
  $k=1$. This leads to an increase in the complexity of determining
  the form of this class of functions, their properties and the method
  of their derivation in applications. We find that it is possible to
  derive an affine normalization, dependent on the threshold excess,
  such that non-degenerate limiting behaviour of the process is
  assured for all lags.\ These normalization functions have an
  attractive structure that has parallels to the Yule-Walker
  equations.\ Furthermore, the limiting process is always linear in
  the innovations.\ We illustrate the results with the study of
  $k$th order stationary Markov chains with exponential margins based
  on widely studied families of copula dependence structures.
\end{abstract}
\noindent \textbf{Key-words:} conditional extremes; conditional
independence; Markov chains; tail chains; recurrence equations
\newline
\noindent\textbf{MSC subject classifications:} Primary: 60GXX,
Secondary: 60G70
\frenchspacing
\section{Introduction}
The extreme value theory of sequences of independent and identically
distributed (i.i.d.) random variables has often been generalised to
include the situation where the variables are no longer independent,
as in the monograph of \cite{leadling83} where for stationary
processes the focus is on long-range dependence conditions and local
clustering of extremes as measured by the extremal index.\ Among the
most useful stochastic processes are positive recurrent Markov chains,
with a continuous state space, which provide the backbone of a broad
range of statistical models and meet the required long-range
dependence conditions \citep{obri87,root88}.\ Such models have
attracted considerable interest in the analysis of extremes of
stochastic processes, by considering the behaviour of the process when
it is extreme, that is, when it exceeds a high threshold.\ \cite{root88}
showed that, under certain circumstances, the times of extreme events
of stationary Markov chains that exceed a high threshold converge to a
homogeneous Poisson process and that the limiting characteristics of
the values within an extreme event, including the extremal index, can
be derived as the threshold converges to the upper endpoint of the
marginal distribution.

Although powerful, this approach only reveals the behaviour of the
chain whilst it remains at the same level of marginal extremity as the
threshold, and therefore it is only informative about clustering for a
subset of processes, termed asymptotically dependent processes which
are defined below.\ For any Markov process which does not exhibit any
asymptotic dependence, known as asymptotically independent processes,
e.g., for any Gaussian Markov process, this limit theory describes
each extreme event as a single observation.\ Motivated by this
observation, in this paper we seek to understand better the behaviour
of a Markov chain within an extreme event under less restrictive
conditions through using a more refined limit theory.\ Our analysis
allows us to characterise the event as it moves between an extreme
state and the body of the distribution.\ In the case of first-order
Markov chains, \cite{papaetal17} treat both asymptotically dependent
and asymptotically independent chains in a unified theory.\ The focus
of this paper is similar, but this time on higher-order Markov chains,
which leads to a substantial increase in the complexity relative to
the situations considered by \cite{papaetal17}. To the best of our
knowledge, important characteristics of the extremal behaviour of
higher-order Markov chains have not been dealt with in-depth, yet
these are crucial for understanding the evolution of extreme events of
random processes and for providing well-founded parametric models that
can be used for inference, prediction and assessment of
risk, e.g., \cite{winttawn15, winttawn17}.

To help illustrate the complexity in higher-order Markov chains,
consider standard measures of extremal dependence \citep{coleheff99}.\
When analysing the extremal behaviour of a real-valued Markov process
$\{X_t \,:\, t=0,1,2,\dots \}$ with marginal distribution $F_t$ and
stationary copula \citep{joe15}, one has to distinguish between two
classes of extremal dependence.\ Let $D=\{1, \hdots, k\}$, where
$k\in \NN$ is the order of a Markov chain, and define
$M = 2^D \sm \{\emptyset\}$.\ The two classes can be
characterized through the quantities
\begin{equation*}
  \chi_{A}=\lim_{u \to 1}\PR(\{F_s(X_s)>u\}_{s\in A}\,|\,
  F_0(X_0)>u), \qquad \text{$A \in M$},
\end{equation*}
assuming that the limit exists for all $A\in M$.\ Bounds can be
obtained for the coefficients, e.g., for any $A_1, A_2 \in M$,
$0\leq\chi_{A_1 \cup A_2} \leq \min(\chi_{A_1}, \chi_{A_2})$.\ When
$\chi_D>0$ ($\chi_j = 0$ for all $j \in D$) we say that the process is
\emph{fully asymptotically dependent} (\emph{fully asymptotically
  independent}); for these processes, respectively, the bounds imply
that for all $A\in M$, $\chi_A>0$ if $|A|<k$ and $\chi_A=0$ if
$|A|>2$.
Previous work on $k$th order Markov chains considers only the case
where the process is fully asymptotically dependent, deriving results
via the tail chain \citep{resnzebe13a,janssege14}.\ In this paper, we
cover both cases, but also, we consider intermediate cases where
$\chi_A>0$ and $\chi_B=0$, for at least one $A,B \in M\sm D$
with $B \supsetneq A$.\

To derive greater detail about the behaviour within extreme events for
Markov chains more information than simply 
 $\chi_A$ for all $A\in M$ is required. In the case of stationary, fully asymptotically dependent Markov chains, where $k\ge 1$, with regular variation assumptions on the marginal
distribution, the appropriate strategy is to study \emph{tail chains} 
 \citep{janssege14}. A tail chain arises as a limiting process
 after witnessing an extreme state, under
rescaling of the future Markov chain by the extreme observation,
resulting in the tail chain being a multiplicative random walk. Even in these restrictive cases few results exist, e.g., \cite{perf97, yun98, janssege14}.
However, tail chains fail to reveal the detailed structure of extreme events for Markov chains for which $\chi_D=0$.
For such process, we show that the \emph{hidden tail
  chain} contains the necessary information, where the distinction between the hidden tail chain
and the tail chain is explained below.\
\cite{papaetal17} show that it is simplest to focus 
on Markov chains with marginal distributions with
exponential-like tails, in the max-domain of attraction of the Gumbel
distribution, using affine normalizations, as this reveals structure
not apparent through the use of regularly varying marginals with affine normalisations.\ 
This is the approach we will also take.



We let
$\sup\{x \,:\, F_0(x) < 1\}=\infty$.\ For $k>1$, we assume that there
exist $k-1$ norming functions $a_t\,:\,\RR \to \RR$ and
$b_t\,:\,\RR \to \RR_+$, for $t=1,\ldots, k-1$, such that
\[
  \left\{\frac{X_t - a_t(X_0)}{b_t(X_0)} \,:\,t=1,\ldots,k-1\right\}
  ~\Big|~\{X_0 > u\} \wk \{\Res_t \,:\,t=1,\ldots, k-1\}
\]
as $u\to \infty$, where $\wk$ denotes weak convergence of measures and
$(\Res_1,\ldots, \Res_{k-1})$ is a random vector that is
non-degenerate in each component.\ Then our aim is to find conditions
that guarantee the existence of an infinite sequence of additional
functions $a_t\,:\,\RR \to \RR$ and $b_t\,:\,\RR \to \RR_+$ for
$t=k,k+1,\ldots$, such that

\[
  \left\{\frac{X_t -
      a_t(X_0)}{b_t(X_0)}\,:\,t=1,2,\ldots\right\}~\Big|~ \{X_0 > u\} \wk
  \{\Res_t \,:\,t=1, 2,\ldots\}
\]
where each $\Res_t$ is non-degenerate.\ The limit process
$\{\Res_t\,:\,t=1,2,\ldots\}$ is termed the hidden tail chain.\ Note
that this limit process generalises the tail chain studied by
\cite{janssege14}, as in that treatment, the norming functions are
restricted to be $a_t(x)=x$ and $b_t(x)=1$ for all $t$, and any
$\Res_t$ can be degenerate at $\{-\infty\}$.\ In cases where we find
that $a_t(x)/x\to \alpha_t < 1$ as $x\to \infty$ for all
$t=1,2,\ldots ,k-1$, that is, the process has asymptotic pairwise
independence for all lags up to $k-1$, then the tail chain degenerates
as $\{-\infty, -\infty, \ldots\}$ but the hidden tail chain is
$\{\Res_1, \Res_2, \ldots\}$, which is finite and stochastic for all
components. Furthermore, if we find that $a_t(x) \sim x$ and
$b_t(x)\sim 1$ as $x\to \infty$ are required for all
$t=1,2,\ldots ,k-1$ then the hidden tail chain is identical to the
tail chain. So the hidden tail chain reveals important structure of
the extreme events lost by the tail chain when the tail chain becomes
degenerate but it equals the tail chain otherwise. Hence, hidden tail
chains have wider use than tail chains. We focus almost exclusively on
forward in time hidden tail chains, as above, but we also briefly
discuss back-and-forth hidden tail chains, expanding on the equivalent
feature for tail chains that \cite{janssege14} study.

Our primary target is to find how the first $k-1$ norming functions
$a_t(\cdot)$ and $b_t(\cdot)$, control those where $t\geq k$ and to
identify the transition dynamics of the hidden tail chain.\ To find
the behaviour of the $t\ge k$ norming functions requires a step-change
in approach relative to the case when $k=1$ studied by
\cite{papaetal17}. In particular, the transitions involve novel
functions, $a$ and $b$, of the $k$ previous values in comparison to
just the single value when $k=1$. Here we develop results for
determining the form of this class of functions and present a method
of their derivation in applications. It is important to also characterise how the dynamics of the hidden
tail chain encode information about how the process changes along its
index and state space.\  Under weak conditions, we make
the surprising finding that 
we can always express $\Res_{t}$ in the form
\begin{equation*}
  \Res_{t} =
  \psi^\loc_\tpo(\bm \Res_{\tk})+\psi^\scale_\tpo(\bm \Res_{\tk
  }) \,\eps_\tpo \qquad \text{for all $t>k$},
\end{equation*}
where $\bm \Res_{\tk}=(\Res_{t-k},\ldots, \Res_{t-1})$,
$\psi^\loc_\tpo\,:\, \RR^k \to \RR$,
$\psi^\scale_\tpo\,:\,\RR^k \to \RR_+$ are continuous update functions
and $\{ \eps_\tpo \,:\, t=1,2,\dots \}$ is a sequence of
non-degenerate i.i.d.\ innovations.\ This simple structure for the
hidden tail chain is controlled through the update functions
$\psi^\loc_\tpo$ and $\psi^\scale_\tpo$ which we show take particular
classes of forms.\ Using the values of $a_t,b_t$ and the properties of
$Z_t$, as $t\rightarrow \infty$, we are able to investigate how the
Markov chain returns to a non-extreme state following the occurrence
of an extreme state.\ Addtionally, some parallels between the extremal
properties of the norming functions and the Yule--Walker equations,
used in standard time-series analysis \citep{yule27, walk31}, are
identified.

The limit theory developed in this paper is the first that considers
asymptotic independence when studying extreme values of any structured
process other than a first-order Markov processes. The extension to
$k$th order Markov processes opens the possibility to developing
similar theory for much broader classes of graphical models. Studying
multivariate extreme values on graphical structures has been a rich
vein of research recently, with several influential papers such as
\cite{engehitz19, segers2019one, asensege2022} and \cite{
  engelke2020sparse}. However, all these papers focus on the case of
all underlying distributions of cliques on the graph being
asymptotically dependent. We believe that the results in this paper
will help to unlock these approaches to enable the case when some, or
all, cliques have asymptotic independence.
\paragraph{Organization of the paper.}
In Section~\ref{sec:th_res}, we state our main theoretical results for
higher-order tail chains with affine update functions under rather
broad assumptions on the extremal behaviour of both fully
asymptotically dependent and fully asymptotically independent Markov
chains. As in previous accounts (\cite{perf94,resnzebe13a,janssege14},
\cite{kulisoul} and \cite{papaetal17}), our results only need the
homogeneity (and not the stationarity) of the Markov chain and
therefore, we state our results in terms of homogeneous Markov chains
with initial distribution $F_0$.\ In Section~\ref{sec:SDE} we study
hidden tail chains of asymptotically independent and asymptotically
dependent stationary Markov chains with standardized marginal
distributions. Subsequently, in Section~\ref{sec:autocovariance} we
characterise closed-form solutions for the norming functions for a
class of asymptotically independent Markov chains, with the structure
of these functions paralleling that of the autocovariance in
Yule--Walker equations.\ In Section~\ref{sec:examples_section}, we
provide examples of Markov chains constructed from widely studied
joint distributions including a Markov chain which is neither fully
asymptotically independent or dependent.\ All proofs are postponed to
Appendix~\ref{sec:proofs}.

\paragraph{Some notation.} We use the following notation.\ Vectors are
typeset in bold and vector algebra is used throughout the paper.\ For
a sequence of measurable functions $\{g_t\}_{t \in \NN}$ and real
valued numbers $\{x_t\}_{t \in \NN}$, the notation $\bm g_{\tk}(x)$
and $\bm x_{\tk}$, , for $t,k,t-k\in \NN$, is used to denote
$(g_{t-k}(x),\hdots,g_{t-1}(x))$ and $(x_{t-k},\hdots,x_{t-1})$,
respectively.\ By convention, univariable functions on vectors are
applied componentwise, e.g., if $f \,:\,\RR\to \RR$,
$\bm x \in \RR^k$, then $f (\bm x) = (f (x_1), \ldots, f (x_k))$.\ The
symbols $\bm 0_p$ and $\bm 1_p$, where $p \in \NN$ are used to denote
the vectors $(0,\ldots,0) \in \RR^p$ and $(1,\ldots,1)\in\RR^p$.\ For
a topological space $E$ we denote its Borel-$\sigma$-algebra by
$\borel(E)$ and the set of bounded continuous functions on $E$ by
$C_b(E)$.\ If $f_n,f$ are real-valued functions on $E$, we say that
$f_n$ converges uniformly on compact sets to $f$ if for any compact
$C \subset E$ the convergence
$\lim_{n \to \infty} \sup_{x \in C} \lvert f_n(x)-f(x) \rvert = 0$
holds true.\ Moreover, $f_n$ is said to converge uniformly on compact
sets to $\infty$ if $\inf_{x \in C} f_n(x) \to \infty$ for compact
sets $C \subset E$.\ Weak convergence of measures on $E$ is
abbreviated by $\wk$.\ \smash{For random elements $X, X_1,X_2,\dots$
  defined on the same probability space, we say $\{X_n\}$} converges
in distribution to $X$, \smash{and we write $X_n\cind X$, if the
  distributions $P_n$ of the $X_n$ converge weakly to the distribution
  $P$} of $X$, that is, if $P_n\wk
  P$.\ 
The closure of set $A$ is denoted by $\overline{A}$.\ 
We use the notation $\lVert \bm x \lVert $ for the $L_1$ norm of a
$k$-dimensional vector $\bm x$.\ For a Cartesian coordinate system
$\RR^k$ with coordinates $x_1,\ldots,x_{k}$, $\nabla$ is defined by
the partial derivative operators as
$\nabla=\sum_{i=1}^k (\partial/\partial x_i) \bm e_i$ for an
orthonormal basis $\{\bm e_1, \ldots, \bm e_k\}$.\ For a
differentiable function $f\,:\,\RR^k\rightarrow \RR$,
$\nabla f(\bm x)=((\nabla f)_1(\bm x),\dots, (\nabla f)_k(\bm x))$
denotes the gradient vector of $f$ at $\bm x$. The notation
$\bm x\cdot \bm y$ is used for the scalar product of two vectors
$\bm x, \bm y\in \RR^k$, that is,
$\bm x\cdot \bm y = \sum_{i=1}^kx_i \,y_i$.\ A function
$f\,:\,\RR^k \to \RR$ is termed $\rho-$homogeneous, $\rho\in\RR$ if
$f(t \bm x)= t^\rho f(\bm x)$ for all $t>0$ and $\bm x\in\RR^k$.\ The
spectral radius $\rho(\bm J)$ of a square matrix
$\bm J\in \RR^{k\times k}$ equals the maximum of the modulus of its
eigenvalues. The standard $(k-1)$-dimensional unit-simplex
$\left\{\bm \omega \in \RR_+^k\,:\, \lVert\bm \omega \lVert
  =1\right\}$, $k\geq 1$, is denoted by $\SI_{k-1}$.

\section{Theory}
\label{sec:th_res}
\subsection{Marginal standardization}
To facilitate the generality of our theoretical developments, our
assumptions about the margins of the process throughout
Section~\ref{sec:th_res} 
only
concern the tail behaviour of the random variable at which we
condition the Markov process to exceed a level.\ This assumption is in
the style of theoretical approaches in conditional extreme value
theory \citep{heffres07} and is made precise by Assumption $\AAA$.
\begin{description}[wide=0\parindent] \em
\item[Assumption $\AAA$.] $F_0$ has upper end point $\infty$ and there
  exists a non-degenerate probability distribution $H_0$ on
  $[0,\infty)$ and a measurable norming function $\sigma(v)>0$, such
  that
  \[
    \frac{F_0(v+\sigma(v) x)}{\overline{F}_0(v)}\wk H_0(x)\qquad
    \text{as $v\to\infty$}.
  \]
\end{description}
From \cite{pick75}, the limit distribution $H_0$ can be identified by
a generalized Pareto distribution with a non-negative shape
parameter.\ In later sections, we will take
$H_0(x)=1-\exp(-x), x\geq 0$, that is, the standard exponential
distribution, such that $F_0$ lies in the maximum domain of attraction
of a Gumbel distribution.

\subsection{Chains with location and scale norming}
\label{sec:A}
The next assumption ensures that after an extreme event at time $t=0$,
a non-degenerate initial distribution, given $\{X_0=v\}$, is obtained
in the limit as $v\to\infty$ for the first $k-1$ renormalized states
of the Markov process.
\begin{description}[wide=0\parindent]
\item[Assumption $\AA$.] \emph{(behaviour of initial states in the
    presence of an extreme event)} \em If $k>1$, there exist
  \begin{enumerate}
  \item[$(i)$] for $t=1,\hdots,k-1$, measurable functions
    $a_t\,:\,\RR \to \RR$ and $b_t\,:\,\RR \to \RR_+$, satisfying
    $a_t(v)+b_t(v)\,x\to \infty$ as $v\to \infty$, for all $x\in \RR$;
  \item[$(ii)$] a distribution $G$ supported on $\RR^{k-1}$ that has
    non-degenerate margins such that
  \end{enumerate}
  \begin{IEEEeqnarray*}{rCl}
    && \Pr\left(\frac{\bm X_{\kkminone}-\bm a_{\kkminone}(v)}{\bm
        b_{\kkminone}(v)} \leq \bm \res_{\kkminone}~\Big|~X_0 = v\right)
    \wk G\,[-\bm \infty, \bm \res_{\kkminone}] \quad \text{as
      $v\to \infty$.}
  \end{IEEEeqnarray*}
\end{description}
\begin{remark}\label{rem:dist_support} When saying that a distribution
  is supported on a subset $A$ of $\RR^k$, we do not allow the
  distribution to place mass at the boundary $\partial A$ of $A$.
\end{remark}
Assumption $\AA$ implies that $a_t(v) \to \infty$ and
$b_t(v)=o(a_t(v))$ as $v\to \infty$ and in Section \ref{sec:scale_only}
we cover the case where $a_t(v) = 0$ for all $v > 0$ and
$b_t(v)\to \infty$ as $v\to
\infty$.\

After initializing the states $X_0,\ldots, X_{k-1}$, a complete
characterization of the advancing sequence of states for $t\geq k$ can
be given from the one-step transition probability kernel of the
homogeneous Markov process
\[
  \pi(\bm x_{\kk}, x_k):= \PR\left(X_{k}\leq x_k \mid \bm X_{\kk} = \bm
    x_{\kk}\right).
\]
To motivate our next assumption about the behaviour of the transition
probability kernel of the process, consider how a complete
characterization may be given for higher-order Markov processes with
$k>1$ using induction on $\NN$.\ Fix a $t \geq k> 1$ and assume there
exist sequences of norming functions $a_i$ and $b_i$,
$i=1,\dots, t-1$, such that,
\begin{equation*}
  \frac{\bm X_{1\,:\,t-1} - \bm a_{1\,:\,t-1}(
    X_0)}{\bm b_{1\,:\,t-1}(X_0)} ~\Big |~\{X_0>u\} \cind \bm \Res_{1\,:\,t-1}\qquad \text{as $u\to \infty$},
\end{equation*}
where each $\Res_i$ is a random variable with a non-degenerate
distribution on $\RR$. Therefore, what is required is to assert that,
under the induction hypothesis, we can find $a_t$ and $b_t$ such that
$\{\bm X_{1\,:\,t} - \bm a_{1\,:\,t}( X_0)\}/\bm b_{1\,:\,t}(X_0) \mid
\{X_0>u\} \cind \bm \Res_{1\,:\,t}$, as $u\to \infty$, where $\Res_t$
is a random variable with a non-degenerate distribution supported on
$\RR$ and $\bm a_{1\,:\,t}=(\bm a_{1\,:\,t-1}, a_t)$ and
$\bm b_{1\,:\,t}=(\bm b_{1\,:\,t-1}, b_t)$.\ To motivate our
assumptions that guarantee this latter convergence, it suffices to
consider marginal convergence, \textit{viz}., the case where the
distribution of $\{X_t-a_t(X_0)\}/b_t(X_0)\mid \{X_0 > u\}$ converges
weakly under the induction hypothesis.\ For any $t\geq k$, standard
calculations give that
\begin{IEEEeqnarray}{l}
  \Pr\left(\frac{X_t-a_t(X_0)}{b_t(X_0)} \leq x_t~\Big|~X_0
    >u\right)=\nonumber\\
  \frac{1}{\overline{F}_0(u)}
  \int\displaylimits_{u}^{\infty}  \int\displaylimits_{\RR^{t-1}} 
  \PR\left(\bm
    X_{0\,:\,t-1}\in d\bm x_{0\,:\,t-1} \right)
  \Bigg[\,\int\displaylimits_{ -\infty}^{x_t}
  \PR\left(\frac{X_t-a_t(X_0)}{b_t(X_0)} \in d z_t ~\Big |~ \bm
    X_{0\,:\,t-1} =\bm x_{0\,:\,t-1}\right)\Bigg],
  \label{eq:marg_conv}
\end{IEEEeqnarray}
where $d \bm x_{0\,:\,t-1}$ is shorthand for
$ d x_0\times\cdots\times d x_{t-1}$.\ Firstly, replace $a_t(X_0)$ by
$a_t(x_0)$ in the innermost integral, by virtue of the conditioning on
the exact value of $X_0$ being equal to $x_0$.\ Then, use the Markov
property so that the conditioning on all previous states is reduced to
conditioning on the previous $k$ states.\ Changing variables to
$z_0 = \{x_0 - u\}/\sigma(u)$ and $z_i = \{x_i - a_i(x_0)\}/b_i(x_0)$
for $i=1,\dots,t-1$, we thus have that for any $t\geq k$, expression
\eqref{eq:marg_conv} is equal to
\begin{IEEEeqnarray}{rCl}
  &&\int\displaylimits_{0}^{\infty} \frac{F_0\{v_u(d
    z_0)\}}{\overline{F}_0(u)} \times \left[
    \,\,\,\int\displaylimits_{\RR^{t-1}} \PR\left(\frac{\bm
        X_{1\,:\,t-1}-\bm a_{1\,:\,t-1}(v_u(z_0))}{\bm
        b_{1\,:\,t-1}(v_u(z_0))}\in d\bm z_{1\,:\,t-1}~\Big|~ X_0 =
      v_u(z_0)\right)\times\right.
  \nonumber \\\nonumber\\
  && \quad\times \left.\left\{\,\,\int\displaylimits_{-\infty}^{x_t}
      \PR\left(\frac{X_t-a_t(v_u(z_0))}{b_t(v_u(z_0))} \in d z_t ~\Big
        |~ \frac{\bm X_{\tk}-\bm a_{\tk}(v_u(z_0))} {\bm
          b_{\tk}(v_u(z_0))}=\bm
        z_{\tk}\right)\right\}\right],\label{eq:kernel_connexion}
\end{IEEEeqnarray}
where $a_0(x)=x$, $b_0(x)=1$ for all $x\in \RR$ and
$v_u(z_0)=u + \sigma(u)\, z_0$.\ Hence, convergence of the innermost
integral in the curly parentheses in
equation~\eqref{eq:kernel_connexion}, is necessary for marginal
convergence.\ Let
$\bm A_{t}(v,\bm z)=\bm a_{\tk}(v) + \bm b_{\tk}(v) \,\bm z$.\ Then,
observe that for arbitrary mappings $a\,:\,\RR^k\to \RR$ and
$b\,:\,\RR^k\to\RR_+$ , the innermost integral can be written as
\begin{IEEEeqnarray*}{rCl}
  &&\int\displaylimits_{-\infty}^{x_t} \PR\left(\frac{X_t-a(\bm
      X_{\tk})}{b(\bm X_{\tk})} \in \frac{d z_t}{\psi_{t,u}^b(\bm
      X_{\tk})}-\psi_{t,u}^a(\bm
      X_{\tk}) ~\Big |~ \bm X_{\tk}=\bm A_{t}(v_u(z_0),\bm
    z_{\tk})\right),
\end{IEEEeqnarray*}
where $\psi_{t,u}^a$ and $\psi_{t,u}^b$ are given in expression
\eqref{eq:psi_functions} and depend on the mappings $a$, $b$ and the
normings $a_t, b_t$, $t \geq k$.\ Therefore, the connection between
the convergence of the transition probability kernel and the sought
marginal convergence can be established when the oscillation of the
functions $a$ and $b$ in a neighbourhood of infinity is controlled,
that is, when $a$ and $b$ are chosen such that the functions
$\psi_{t,u}^a$ and $\psi_{t,u}^b$ converge locally uniformly to
real-valued limits ($\psi_t^\loc$ and $\psi_t^\scale$, respectively),
as $u\to \infty$.\ These observations motivate our next assumption.
\begin{description}[wide=0\parindent] 
\item[Assumption $\AB$.] \emph{(behaviour of the next state of the
    process as the previous $k$ states become extreme)} \em Let
  $k\geq 1$, $a_0(x)=x$ and $b_0(x)=1$. If $k > 1$, suppose that for
  $\bm X_{\kkminone}$, the Assumption $\AA$ holds with norming
  functions $\bm a_{\kkminone}$ and $\bm b_{\kkminone}$.\ There exist,
  \begin{enumerate}[wide=0\parindent] 
  \item[$(i)$] for $t=k,k+1,\dots$, measurable functions
    $a_t\,:\,\RR \to \RR$ and $b_t\,:\,\RR \to \RR_+$,
    continuous update functions
    $\psi_{t}^a\,:\,\RR^k \to \RR$,
    $\psi_t^b\,:\,\RR^k \to \RR_+$ and measurable functions
    $\afunD\,:\,\RR^k \to \RR$,
    $\bfunD\,:\,\RR^k \to \RR_+$, such that, for
    all $\bm \res\in\RR^k$
    \begin{equation}   
      \psi_{t,v}^a(\bm \res_v):=\frac{a(\bm A_{t}(v,\bm \res_v))-a_{t}(v)}{b_{t}(v)}
      \to \psi_{t}^a( \bm \res) \quad \text{and} \quad
      \psi_{t,v}^b(\bm \res_v):=\frac{b(\bm A_{t}(v,\bm \res_v))}{b_{t}(v)}\to \psi_{t}^b(\bm \res), 
      \label{eq:psi_functions}
    \end{equation}
    whenever $\bm \res_v \to \bm \res$ as $v\to\infty$, where
    $\bm A_{t}(v,\bm \res):= \bm a_{\tk}(v)+\bm b_{\tk} (v)\,\bm \res$;
  \item[$(ii)$] a non-degenerate distribution $K$ supported on $\RR$,
    such that for all $\bm \res\in\RR^k$ and for any $f\in C_b(\RR)$
  \begin{equation*}
    \int_\RR f(x) \pi[\bm A_{t}(v,\bm \res_v), a(\bm A_{t}(v,\bm \res_v)) +
    b(\bm A_{t}(v,\bm \res_v))\,dx]\to\int_{\RR}f(x)K(dx),\qquad t=k,k+1,\dots,
    \label{eq:A2}
  \end{equation*}
  whenever $\bm \res_v \to \bm \res$ as $v\to \infty$.
\end{enumerate}
\end{description}
By the same token, we establish the weak convergence of the
renormalized Markov chain to a hidden tail-chain.\ This is asserted by
Theorem \ref{thm:tailchain} below.
\begin{theorem}
  \label{thm:tailchain}
  Let $\{X_t\,:\,t=0,1,\hdots\}$ be a homogeneous $k$th order
  Markov chain satisfying Assumptions $\AAA, \AA$ and $\AB$.\ Then as
  $v\to \infty$
  \begin{equation}
    \left(\frac{X_0 - u}{\sigma(v)}\CommaBin\frac{X_1 -
        a_1(X_0)}{b_1(X_0)}\CommaBin\cdots\CommaBin\frac{X_t -
        a_t(X_0)}{b_t(X_0)}\right)~\Big|~\{X_0>v\} \cind
    (E_0,\Res_1,\hdots,\Res_t), \qquad t \geq k,
    \label{eq:htc}
  \end{equation}
  where
\begin{enumerate}
\item[$(i)$] $E_0\sim H_0$ and $(\Res_1,\Res_2\hdots,\Res_t) $ are independent,
\item[$(ii)$] $\Res_0=0$ a.s., $(\Res_1,\hdots,\Res_{k-1})\sim G$ and
  \begin{equation}
    \Res_{s} = \psi_s^a( \bm \Res_{s-k\,:\,s-1}) + \psi_s^b(
    \bm \Res_{s-k\,:s-1})\,\varepsilon_s,\qquad s=k,k+1,\hdots
    \label{eq:tail_chain}
  \end{equation}
  for a sequence of i.i.d. random variables $\varepsilon_s\sim K$.
\end{enumerate}
\end{theorem}
Our theory provides a constructive approach to identify the sequence
of additional norming functions $a_t$ and $b_t$, for $t=k,k+1,\dots$.\
This is due to Proposition \ref{lemma:doa_equivalence} below, a proof
is given in Appendix~\ref{sec:proof_doa_equivalence}.
\begin{proposition}
  \label{lemma:doa_equivalence}
  Let $a\,:\,\RR^k\to \RR$ and $b\,:\,\RR^k\to \RR_+$ be measurable
  maps.\ Let $t\geq k$ and $\bm \res\in\RR^k$.\ The following
statements are equivalent
  \begin{itemize}
  \item[(i)] There exist measurable functions $a_t\,:\,\RR\to \RR$,
    $b_t\,:\,\RR\to \RR_+$ and continuous functions
    $\psi_t^a\,:\,\RR^k\to\RR$ and $\psi_t^b\,:\,\RR^k\to\RR_+$, such
    that convergence \eqref{eq:psi_functions} holds.
  \item[(ii)] There exist continuous functions
    $\lambda_t^a\,:\,\RR^k\to\RR$ and $\lambda_t^b\,:\,\RR^k\to\RR_+$,
    such that for all $\bm \res \in \RR^k$
    \begin{IEEEeqnarray*}{rCl} && \frac{a(\bm A_{t}(v,\bm
\res_v))-a(\bm A_{t}(v,\bm 0))}{b(\bm A_{t}(v,\bm 0))}
\to\lambda_{t}^a( \bm \res) \quad\text{and} \quad \frac{b(\bm
A_{t}(v,\bm \res_v))}{b(\bm A_{t}(v,\bm 0))}\to \lambda_t^b(\bm \res),
\end{IEEEeqnarray*}
\text{whenever $\bm \res_v\to \bm \res$ as $v\to \infty$}.
\end{itemize}
\end{proposition}
\begin{remark}
  \label{rem:order1} When $k=1$, due to Proposition
  \ref{lemma:doa_equivalence} we can choose, without loss of
  generality, $a_1(v)=a(v)$ and $b_1(v)=b(v)$ so that
  $\psi_1^a(0) = \{a(v) - a_1(v)\}/b_1(v) = 0$ and
  $\psi_1^b(0)= b(v)/b_1(v) = 1$.\ Consequently,
  expression~\eqref{eq:tail_chain} implies that
  $\Res_1 = \varepsilon_1$ and thus, the special case of $k=1$ in
  Theorem~\ref{thm:tailchain} corresponds to the results of
  \cite{papaetal17}.
\end{remark}

\subsection{Near extremally independent chains}
\label{sec:EI_chains}
In this section, we consider Markov chains where no norming of the
location and no norming of the scale are needed.\ This case resembles
the formulation of Theorem \ref{thm:tailchain}, but has $a_t(v)=0$ and
$b_t(v)=1$, for all $t\geq 1$.\ Thus, the next assumption ensures that
after an extreme event at time $t=0$, a non-degenerate distribution,
given $\{X_0=v\}$, is obtained in the limit as $v\to\infty$ for the
first $k$ states of the Markov process, without any renormalization.
\begin{description}[wide=0\parindent]
\item[Assumption $\AA^*$.] \emph{(behaviour of next $k$ states in the
    presence of an extreme event)} \em There exists a distribution $G$
  supported on $\RR^{k}$ that has non-degenerate margins such that
  \begin{IEEEeqnarray*}{rCl}
    && \Pr\left(\bm X_{1\,:\,k} \leq \bm \res_{1\,:\,k}~\Big|~X_0 =
      v\right) \wk G\,[-\bm \infty, \bm \res_{1\,:\,k}] \quad \text{as
      $v\to \infty$.}
  \end{IEEEeqnarray*}
\end{description}
\cite{hefftawn04} showed that Assumption $\AA^*$ holds for the
Morgenstern copula for $k\geq 1$.\ A related assumption also appears
\cite{maulik2002asymptotic} for the case $k=1$ with $(X_0, X_1)$ being
nonnegative random variables.\ Here, we note that if
$\bm X_{\kplusonekplusone}$ has the independence copula, then
$G(\bm \res_{1\,:\,k}) = \prod_{j=1}^kF_j(z_j)$ whereas cases with
$G_j(z) \geq F_j(z)$ ($G_j(z) \leq F_j(z)$) for all $z\in\RR$, with
$G_j$ being the $j$th marginal distribution of $G$, correspond to
positive (negative) near extremal independence at lag $j$ in the
hidden tail chain.

Assumptions $\AAA$ and $\AA^*$ are sufficient to establish the weak
convergence of the conditioned Markov chain to a hidden tail-chain in
Theorem \ref{thm:tailchain_EI} below.\ The proof of this theorem
follows along the lines of the proof of Theorem~\ref{thm:tailchain}
and is omitted for brevity.
\begin{theorem}
  \label{thm:tailchain_EI}
  Let $\{X_t\,:\,t=0,1,\hdots\}$ be a homogeneous $k$th order Markov
  chain satisfying Assumptions $\AAA$ and $\AA^*$.\ Then as
  $v\to \infty$
  \begin{equation}
    \left(\frac{X_0 - u}{\sigma(v)}, X_1,\dots, X_t\right)~\Big|~\{X_0>v\} \cind
    (E_0,\Res_1,\hdots,\Res_t)
    \label{eq:htc_EI}
  \end{equation}
  where
  \begin{enumerate}
  \item[$(i)$] $E_0\sim H_0$ and $(\Res_1,\Res_2\hdots,\Res_t) $ are
    independent,
  \item[$(ii)$] $(\Res_1,\hdots,\Res_{k})\sim G$ and
    \[
      \Res_{s}=\pi^{-1}(\bm \Res_{s-k\,:\,s-1}, U_s), \qquad s=k+1,k+2,\dots,
    \]
    where $\{U_s\}$ is a sequence of independent uniform$(0,1)$ random
    variables and $\pi^{-1}\,:\,\RR^k \times (0,1)\to \RR$ with
    $\pi^{-1}(\bm z, u) := \inf\{x \in \RR\,:\,\pi(\bm z, x)> u\}$.
  \end{enumerate}
\end{theorem}
\noindent We note that if $\bm X_{\kplusonekplusone}$ has the
independence copula, then $\pi^{-1}(\bm z, u)$ is independent of
$\bm z$.

\subsection{Nonnegative chains with only scale norming}
\label{sec:scale_only} 
Consider nonnegative Markov
chains where no norming of the location is needed.\ As in
\cite{papaetal17}, we require extra care relative to Section
\ref{sec:A} since the convergences in Assumption $\AB$ $(i)$ will
be satisfied for all $x \in (0, \infty)$, not all $x \in [0, \infty)$.\ Hence, we have to control the mass of the
limiting renormalized initial distribution and the limiting
renormalized transition probability kernel of the Markov process.
\begin{description}[wide=0\parindent] \em
\item[Assumption $\BA$.] \emph{(behaviour of the next state as the
    previous states becomes extreme)} There exist measurable functions
  $b_t\,:\,\RR_+\to\RR_+$ for $t=1,\hdots,k$, such that
  $b_t(v)\to \infty$ as $v\to\infty$ and a non-degenerate distribution
  function $G$ on $[0,\infty)^{k}$, with no mass at any of the
  half-planes
  $C_j=\{(\res_1,\ldots,\res_{k-1})\in [0,\infty)^{k-1}:
  \text{$\res_j = 0$}\}$, that is, $G(\{C_j\})=0$ for $j=1,\ldots, k-1$,
  such that as $v\to \infty$,
  \begin{IEEEeqnarray*}{rCl} && \Pr\left(\frac{\bm X_{\kkminone}}{\bm
        b_{\kkminone}(v)} \leq \bm \res_{\kkminone}~\Big|~X_0 =
      v\right) \wk G\,[\bm 0, \bm \res_{\kkminone}] \quad \text{as
      $v\to \infty$.}
  \end{IEEEeqnarray*}
\end{description}

\begin{description}[wide=0\parindent]
\item[Assumption $\BB$.] \emph{(behaviour of the next state of the
process as the initial states become extreme)} \em Let $k\geq 1$ and
$b_0(x)=x$. If $k > 1$, suppose that for $\bm X_{\kkminone}$, the
Assumption $\BA$ holds with norming functions $\bm b_{\kkminone}$.\
There exist
\begin{enumerate}
\item[$(i)$] for $t=k,k+1,\dots$, measurable functions
  $b_t\,:\,\RR_+ \to \RR_+$, continuous update functions
  $\psi_t^b\,:\,\RR_+^k \to \RR_+$ and a measurable function
  $\bfunD\,:\,\RR_+^k \to \RR_+$, such that for all
  $\delta_1,\ldots,\delta_k > 0$ and $\bm \res \in [\delta_1,\infty)
  \times\ldots \times [\delta_k, \infty)$,
  \begin{equation} \lim_{v\to\infty} \frac{b( \bm B_{t}(v,\bm
      \res_v))}{b_{t}(v)}\to \psi_{t}^b(\bm \res)>0,
    \label{eq:psi_functions_scale}
  \end{equation} whenever $\bm \res_v \to \bm \res$ as $v\to
  \infty$ and 
  $\sup\{\lVert \bm \res \lVert_{\infty}\,:\, \bm
  \res\in A_c\} \to 0$ as $c\downarrow 0$,
  where $\bm B_{t}(v,\bm \res):= \bm b_{\tk} (v)\,\bm \res$
  and  $A_c = \{\bm \res\in
  (0, \infty)^k\,:\, \psi_t^b(\bm \res)\leq c\}$ with the convention
  that $\sup(\emptyset) = 0$;
\item[$(ii)$] a non-degenerate distribution $K$ supported on
  $[0,\infty)$ with no mass at $\{0\}$, that is, $K\,\{0\}=0$, such
  that, for any $f\in C_b(\RR_+)$,
  \end{enumerate}
  \begin{equation*} \int_{\RR_+}f(x)\, \pi[\bm B_{t}(v,\bm \res_v),
b(\bm B_{t}(v,\bm \res_v))\,dx]\to \int_{\RR_+} f(x)\,K(dx), \qquad
\text{$t=k,k+1,\dots$}
  \end{equation*} whenever $\bm \res_v \to \bm \res$ as $v\to \infty$.
\end{description}

\begin{theorem}
  \label{thm:tailchain:nonneg} Let $\{X_t\,:\,t=0,1,\hdots\}$ be a
homogeneous Markov chain satisfying Assumptions $\AAA, \BA$ and $
\BB$.\ Then as $u\to \infty$
  \begin{equation} \left(\frac{X_0 -
u}{\sigma(u)}\CommaBin\frac{X_1}{b_1(X_0)}\CommaBin\cdots\CommaBin\frac{X_t}{b_t(X_0)}\right)~\Big|~\{X_0>u\}
\cind (E_0,\Res_1,\hdots,\Res_t)
    \label{eq:htc_scale}
  \end{equation} where
  \begin{enumerate}
  \item[$(i)$] $E_0\sim H_0$ and $(\Res_1,\Res_2\hdots,\Res_t) $ are
independent,
  \item[$(ii)$] $\Res_0=1$ a.s., $(\Res_1,\hdots,\Res_{k})\sim G$ and
    \begin{equation} \Res_{s} = \psi_s^b(\bm
\Res_{s-k\,:s-1})\,\varepsilon_s,\quad s=k,k+1,\hdots
      \label{eq:tail_chain_scale}
    \end{equation} for a sequence of i.i.d. random variables $\varepsilon_s\sim K$.
  \end{enumerate}
\end{theorem}
\begin{remark} Theorem \ref{thm:tailchain:nonneg} appears
  simply to be the $k$-order extension of Theorem 3.1 in
  \cite{kulisoul} but it differs as $H_0$ is in a broader class than
  the Pareto family considered by them. Here, an exponential tail for
  $H_0$ is also included.
\end{remark}  

\subsection{Back-and-forth hidden tail chains}

{ \color{black} In the discussion above, formally the entities we have
  referred to as tail chains and hidden tail chains are in fact
  forward tail and hidden tail chains
  \citep[\textit{cf.}][]{janssege14}.\ These describe the behaviour of
  the Markov chain only forward in time from a large observation.\
  There is also the parallel interest in a backward tail/hidden tail
  chain, to give how the chain evolves into an extreme event, and the
  joint behaviour of the two, known as back-and-forth tail processes.
}

{ \color{black} Here we focus on an extension of the back-and-forth
  tail chains developed by \cite{janssege14}.\ The backward hidden
  tail chain characteristics are similar in structure to the forward
  hidden tail chain properties identified in
  Sections~\ref{sec:A}--\ref{sec:scale_only}.\ To save repetition,
  here we outline the 
  back-and-forth hidden tail chains for the assumptions in
  Section~\ref{sec:A} only. For this purpose, it suffices to consider
  a straightforward extension of Assumption $\AB$ which allows us to
  characterize the backward behaviour of the chain from an extreme
  event by requiring functional normalization for the backward chain
  $X_{-s}\mid \bm X_{-s+1\,:\,-s+k}$, $s\in\NN$.\ Clearly, if the
  chain is time-reversible, then Assumption $\AB$ holds backwards with
  the same functional normalizations $a$ and $b$ and the same limit
  distribution $K$. In general, however, there is no mathematical
  connection between these forward and backward quantities and
  Assumption $\AB^{-}$ below considers this more general case.
  \begin{description}[wide=0\parindent]
  \item[Assumption $\AB^{-}$.] \emph{(behaviour of the backward state
      of the process)} \em Let
    $k\geq 1$, $a_0(x)=x$ and $b_0(x)=1$. If $k > 1$, suppose that for
    $\bm X_{\kkminone}$, the Assumption $\AA$ holds with norming
    functions $\bm a_{\kkminone}$ and $\bm b_{\kkminone}$.\ There
    exist,
  \begin{enumerate}[wide=0\parindent] 
  \item[$(i)$] for $s=1,2,\dots$, measurable functions
    $a_{-s}\,:\,\RR \to \RR$ and $b_{-s}\,:\,\RR \to \RR_+$, continuous
    update functions $\psi_{-s}^{a^-}\,:\,\RR^k \to \RR$,
    $\psi_{-s}^{b^-}\,:\,\RR^k \to \RR_+$ and measurable functions
    $\afunD^{-}\,:\,\RR^k \to \RR$, $\bfunD^{-}\,:\,\RR^k \to \RR_+$,
    such that, for
    all $\bm \res\in\RR^k$
    \begin{equation}
      \frac{a^{-}(\bm A_{-s}(v,\bm \res_v))-a_{-s}(v)}{b_{-s}(v)}
      \to \psi_{-s}^{a^-}( \bm \res) \quad \text{and} \quad
      \frac{b^{-}(\bm A_{-s}(v,\bm \res_v))}{b_{-s}(v)}\to \psi_{-s}^{b^-}(\bm \res), 
      \label{eq:psi_functions_back}
    \end{equation}
    whenever $\bm \res_v \to \bm \res$ as $v\to\infty$, where
    $\bm A_{-s}(v,\bm \res):= \bm a_{\skb}(v)+\bm
    b_{\skb} (v)\,\bm \res$;
  \item[$(ii)$] a non-degenerate distribution $K^{-}$
    supported on $\RR$, such that for all $\bm \res\in\RR^k$ and for
    any $f\in C_b(\RR)$
  \begin{equation*}
    \int_\RR f(x) \pi[\bm A_{-s}(v,\bm \res_v), \afunD^{-}(\bm A_{-s}(v,\bm \res_v)) +
    \bfunD^{-}(\bm A_{-s}(v,\bm \res_v))\,dx]\to\int_{\RR}f(x)K^{-}(dx),\qquad s=1,2,\dots,
    \label{eq:A2_back}
  \end{equation*}
  whenever $\bm \res_v \to \bm \res$ as $v\to \infty$.
\end{enumerate}
  \end{description}
  
  The back-and-forth hidden tail chain is presented in Theorem
  \ref{thm:tailchain_back}.\ For the sake of brevity, we do not
  include its proof as this is identical to the proof of
  Theorem~\ref{thm:tailchain}.
  \begin{theorem}
    \label{thm:tailchain_back}
    Let $\{X_t\,:\,t=0,1,\hdots\}$ be a homogeneous $k$th order Markov
    chain satisfying Assumptions $\AAA, \AA$, $\AB$ and $\AB^-$.\ Then
    as $v\to \infty$
    \begin{equation*}
      \left(\frac{\bm X_{-s\,:\,-1} -
          \bm a_{-s\,:\,-1}(X_0)}{\bm b_{-s\,:\,-1}(X_0)}\CommaBin\frac{X_0 - u}{\sigma(v)}\CommaBin\frac{\bm X_{1\,:\,t} -
          \bm a_{1\,:\,t}(X_0)}{\bm b_{1\,:\,t}(X_0)}\right)~\Big|~\{X_0>v\} \cind
      (\bm \Res_{-s\,:\,-1},E_0,\bm \Res_{1\,:\,t}),\qquad t \geq k,\quad s\in\NN,
    \end{equation*}
    where
    \begin{enumerate}
    \item[$(i)$] $E_0\sim H_0$ is independent of
      $(\bm \Res_{-s\,:\,-1}, \bm \Res_{1\,:\,t})$,
    \item[$(ii)$] $\Res_0=0$ a.s., $\bm Z_{1\,:\,k-1}\sim G$,
      \begin{equation*}
        \Res_{t} = \psi_t^a( \bm \Res_{t-k\,:\,t-1}) + \psi_t^b(
        \bm \Res_{t-k\,:t-1})\,\varepsilon_t,\qquad t=k,k+1,\hdots
      \end{equation*}
      and
      \begin{equation*}
        \Res_{s} = \psi_{-s}^{a^-}( \bm \Res_{-s+1\,:\,-s+k}) + \psi_{-s}^{b^-}(
        \bm \Res_{-s+1\,:\,-s+k})\,\varepsilon_{-s},\qquad s=1,2,\hdots
      \end{equation*}
      for independent sequences of i.i.d. random variables
      $\{\varepsilon_{-s}\}_{s=1}^{\infty}$ and
      $\{\varepsilon_t\}_{t=k}^{\infty}$, where
      $\varepsilon_{-s} \sim K^-$ and $\varepsilon_t\sim K$.
    \end{enumerate}
  \end{theorem}


  In general there is a relationship between the forward and backward
  hidden tail chains.\ When $k=1$ these are independent, but when
  $k>1$ and $t+s> k$, then $\Res_{t}$ is conditionally independent of
  $\Res_{-s}$ given $(\bm \Res_{-s+1\,:\,-1}, \bm\Res_{1\,:\,t-1})$.
  Hence, given any consecutive block of terms in the back-and forth
  hidden tail chain of size $k$, then the values before and after this
  block are independent.\ We remark that the precise dependence
  conditions between the forward and backward hidden tail chains have
  been given for the case where only $a_j(x)=x$ and $b_j(x)=1$ for all
  $j\neq 0$ by \cite{janssege14}.\ We focus on the forward hidden tail
  chain and do not address the inter-connections between the different
  $a_j(x)$ and $b_j(x)$ for positive and negative $j$.\
}

\section{Stochastic recurrence equations and dependence parameters}

\label{sec:SDE}
\subsection{Introduction}
The theory presented in Section~\ref{sec:th_res} comprises a
generalization of the theory presented in \cite{papaetal17} who dealt
only with first-order homogeneous Markov chains.\ Although working
with homogeneous chains embeds the theory in a rather broad setting,
it is impossible to explore the form of the results of
Theorems~\ref{thm:tailchain}--\ref{thm:tailchain_back}
unless further structure is imposed.\ In practice, standardization to
a common marginal form is typically performed on all marginal
distributions of the process via the probability integral transform.\
This approach is in the style of copula methods where the assumption
of identical margins is sufficient to identify the extremal dependence
structure of a random vector.\ Therefore, we will assume that all
one-dimensional marginal distributions of the Markov process are
standardized to unit-rate exponential random variables, as this is the
marginal choice with the clearest mathematical formulation.\ To this
end, we will assume that the Markov chain $\{X_t\}$ is stationary with
unit-exponential marginal distributions, that is,
$F_t(x)=\Pr(X_t \leq x) = (1-\exp(-x))_+$, for $t=0,1,\dots$, which
implies that the limit distribution $H_0$ in Assumption $\AAA$ is also
unit exponential.\ In Section~\ref{sec:SDE_AD} we consider the fully
asymptotic dependence case when $\chi_D>0$, so that $\chi_t > 0$ for
all $t\geq 1$.\ In Section~\ref{sec:SDE_AI} we consider the case where
the process is fully asymptotically independent, that is, $\chi_j=0$
for all $j \in D$, which also implies $\chi_t = 0$ for all $t\geq 1$.\
Intermediate cases are discussed in Example \ref{ex:alog} of Section
\ref{sec:examples}.

\cite{hefftawn04} found that for various copula models for a random
vector $\bm X_{0\,:\,d}$, $d\in \NN$, with exponentially tailed random
variables, the weak convergence of the conditional distribution of the
renormalized states
$\{\bm X_{1\,:\,d}-\bm a_{1\,:\,d}(X_0)\}/\bm b_{1\,:\,d}(X_0)$ given
$X_0>v$, to some distribution with non-degenerate margins, holds with
the normalization functions taking the simple form
$\bm a_{1\,:\,d}(v)=\bm \alpha_{1\,:\,d}\, v$ and
$\bm b_{1\,:\,d}(v) = v^{\beta}\bm 1_d$, where
$(\bm \alpha_{1\,:\,d},\beta) \in [0,1]^{d}\times[0,1)$.\ The
parameters $\alpha_t$ and $\beta$ have a simple interpretation and
control the strength of extremal association between variables $X_0$
and $X_t$, for $t=1,\dots,d$. Informally, in the presence of an
extreme event $X_0$ with $X_0 > v$ and $v$ sufficiently large, we may
then think of $X_t$ as $X_t = \alpha_t X_0 + X_0^\beta \,\Res_t$ where
$\Res_t$ arises from a non-degenerate distribution.\ Thus, $\alpha_t$
and $\beta$ are slope and scale parameters, respectively, with larger
values of $\alpha_t$ indicating stronger linear dependence between
$X_0$ and $X_t$ and with larger values of $\beta$ indicating a more
diffuse distribution for $X_t\mid X_0 > v$.\ In particular, when
$\alpha_t=1$ and $\beta=0$, then $X_0$ and $X_t$ are asymptotically
dependent so that $\chi_t >0$.\ When $\alpha_t\in [0,1)$ and
$\beta\in[0,1)$, then $X_0$ and $X_t$ are asymptotically independent
so that $\chi_t = 0$.\ An important special is when $X_0$ and $X_t$
are extremally independent so that no norming is needed, thus
$\alpha_t=0$ and $\beta=0$ as in Section \ref{sec:EI_chains}.\




\subsection{Fully asymptotically dependent Markov chains}
\label{sec:SDE_AD}
\begin{corollary}[\em Fully asymptotically dependent Markov chains]
  \label{cor:SDE_AD}
  \begin{description}[wide=1\parindent] Let $\{X_t\,:\,t=0,1,\hdots\}$
    be a $k$-th order stationary Markov chain with unit-exponential
    margins.\ Suppose that Assumption $\AA$ holds with $a_t(x)=x$ and
    $b_t(x)=1$, for $t=1,\dots,k-1$.\ Suppose further that $\AB$ holds
    with the function $a$ being non-zero and continuous, such that
    $\exp\{a( \log \bm x )\}$, $\bm x \in \RR^k_+$, is
    $1$--homogeneous, that is,
    \begin{equation} a(v\bm 1_k+ \bm y) - v = a(\bm y), \qquad
      \text{for all $v\in\RR$ and all $\bm y\in\RR^k$,}
      \label{eq:ams_condition}      
    \end{equation}
    and $a(\bm 0) \leq 0$.\ Then, the convergence \eqref{eq:htc} holds
    with $a_t(x)=x$ and $b_t(x)=1$ for $t\geq k$, and
    \begin{equation*} \Res_t = a(\bm \Res_{\tk}) + \varepsilon_t,
      \quad t=k,k+1,\dots,
    \end{equation*}
    for a sequence $\{\varepsilon_{t}\}_{t=k}^\infty$ of i.i.d. random
    variables with a distribution supported on $\RR$. Furthermore,
    $\mathbb{E}(Z_t) < 0$ for all $t\geq 1$.
  \end{description}
\end{corollary}
Although $a_t(x)=x$ and $b_t(x)=1$ for all $t\geq 1$, fully
asymptotically dependent Markov chains return to the body of the
distribution after witnessing an extreme event.\ This is due to the
negative drift of the tail chain, that is, $\mathbb{E}(\Res_t) < 0$
for all $t\geq 1$ which ensures that the Markov chain will return to
the body regardless of the behaviour of the norming functions.

\subsection{Fully asymptotically independent Markov chains}
\label{sec:SDE_AI}
\begin{corollary}[Fully asymptotically independent Markov chains with
  location and scale norming]
  \label{cor:SDE_AI}
  \begin{description}[wide=1\parindent] Let $\{X_t\,:\,t=0,1,\hdots\}$
    be a $k$-th order stationary Markov chain with unit exponential
    margins.\ Suppose Assumption $\AA$ holds with
    $a_t(v)=\alpha_t\,v$, $\alpha_t \in (0,1)$ and
    $b_t(v) = v^{\beta}$, $\beta \in [0,1)$, for $t=1,\dots,k-1$.\
    Suppose that Assumption $\AB$ holds with the function $a$ being
    $1$--homogeneous and the function $b$ being $\beta-$homogeneous
    when $\beta\in (0,1)$ and unity when $\beta = 0$.\
    Then, convergence \eqref{eq:htc}
    holds with $a_t(v)=\alpha_t \, v$ and $b_t(v)=v^{\beta}$, where
    \begin{IEEEeqnarray}{lCl} \alpha_t = a(\bm \alpha_{\tk}), \quad
      t=k,k+1,\dots,
      \label{eq:recurrence_slope}
    \end{IEEEeqnarray} and
    \begin{IEEEeqnarray}{rCl} \Res_t &=& \nabla a(\bm \alpha_{\tk})
      \cdot \bm \Res_{\tk} \,+\, b(\bm \alpha_{\tk})
      \,\varepsilon_t\qquad
      t=k,k+1,\ldots\label{eq:linear_tail_chain},
    \end{IEEEeqnarray} for a sequence $\{\varepsilon_t\}_{t=k}^\infty$
    of i.i.d. random variables from a non-degenerate distribution $K$
    on $\RR$.\ Additionally, if the function $a$ is twice continuously
    differentiable, $a(\bm 1_k)\neq 1$, and the spectral radius
    $\rho(\bm J_f(\bm 0))$ of the Jacobian matrix
    $\bm J_f\,:\,\RR^{k}\to \RR^{k\times k}$ of the map
    $\RR^k\ni\bm x_{1\,:\,k}\mapsto f(\bm x_{1\,:\,k})=(\bm
    x_{2\,:\,k},a(\bm x_{1\,:\,k}))$, is strictly less than unity,
    then
    \begin{equation}
      \alpha_t \to 0\quad\text{as $t\to\infty$}.
      \label{eq:alpha_convergence}
    \end{equation}
  \end{description}
\end{corollary}

{\color{black} Despite its relatively weak assumptions
  Corollary~\ref{cor:SDE_AI} provides considerable insight into the
  behaviour of the hidden tail chain.  It shows that the norming
  functions $a_t$, $t=k,k+1,\ldots,$ have a particularly neat
  structure, not least $a_t(X_0)=\alpha_t\,X_0$, where $\alpha_t$ is
  determined by the recurrence equation~\eqref{eq:recurrence_slope} of
  the $k$ previous values $\bm \alpha_{\tk}$ through the
  $1$--homogeneous function $a$.\ Also $\alpha_t\to 0$ as
  $t\to \infty$ leading eventually to no location norming in the
  limit, which is consistent with the independence case.  For a
  flexible parametric class of the function $a$, in
  Section~\ref{sec:autocovariance} we are able to explicitly solve the
  recurrence equation \eqref{eq:recurrence_slope} and find the form of
  a geometric decay to zero in $\alpha_t$ as $t$ increases.

  When $\beta=0$ the behaviour of the forward tail chain $t\to \infty$
  is almost entirely given by Corollary~\ref{cor:SDE_AI}. Here
  $X_t|\{X_0=v\} = \alpha_t \,v + Z_t +o_p(1)$ as
  $v\rightarrow \infty$.\ For fixed $t$, $X_t$ grows as
  $v\rightarrow \infty$.\ However, if we allow $t\rightarrow \infty$
  sufficiently fast with $v\rightarrow \infty$ the location tends to
  zero and $X_t$ converges to the process $\{Z_t\}$, which is a
  non-degenerate stationary autoregressive process. So, with such
  combined limiting operations, we have that $X_t\mid \{X_0=v\}$
  returns to the body of the distribution as $t\to \infty$, becoming
  independent of $X_0$. Here, if $\alpha_t\sim A^t$, as
  $t\rightarrow \infty$ where $A$ is a constant with $0<A<1$, i.e.,
  geometric decay, then we would need $t/\log(v)\rightarrow \infty$,
  as $v\rightarrow \infty$ for this result to hold.\ When $0<\beta<1$
  the limiting behaviour of the forward chain $X_t|\{X_0=v\}$ as
  $t\to \infty$ is only partially implied by
  Corollary~\ref{cor:SDE_AI}. This is because $Z_t\cinp 0$, since both
  the location and scale terms of $\epsilon_t$ in
  expression~\eqref{eq:linear_tail_chain} tend to zero, but its
  scaling $(v)^{\beta}$ tends to infinity.  Consequently, the limiting
  behaviour is determined by the relative speed of convergence of
  $\alpha_t\, v\rightarrow 0$ and $v^{\beta}\,Z_t\cinp 0$ if we link
  the growth rate of $t$ to that of $v$.}

In general we can view the recurrence relation in
expression~\eqref{eq:recurrence_slope} as the parallel of the
Yule--Walker equations and hence, term them the \textit{extremal
  Yule--Walker equations}.\ The Yule--Walker equations provide a
recurrence relation for the autocorrelation function in standard time
series that is used to determine the dependence properties of a linear
Markov process.\ For a $k$th order linear Markov process
$Y_t = \sum_{i=1}^k \phi_i\, Y_{t-k+i-1}+\eta_t$ with
$\{\eta_t\}_{t=-\infty}^\infty$ a sequence of zero mean, common finite
variance and uncorrelated random variables, where the set of
regression parameters $\phi_1,\ldots, \phi_k$ are real valued
constants such that the characteristic polynomial
$1-\phi_k\, z - \phi_{k-1}\, z^2- \cdots - \phi_1\, z^k\neq 0$ on
$\{z\in \mathbb{C}\,:\, \lvert z \lvert \leq 1\}$, the Yule--Walker
equations relate the autocorrelation function of the process
$\rho_t = \text{cor}(Y_{s-t}, Y_s)$ at lag $t$ with the regression
parameters $\phi_1,\ldots, \phi_k$ and the $k$ lagged autocorrelations
according to $\rho_t = \sum_{i=1}^k \phi_{i} \,\rho_{t-i}$,
$t \in \ZZ$. The sequence $\{\alpha_t\}$ has a similar structure for
extremes via recurrence~\eqref{eq:recurrence_slope}.

In Corollary \ref{cor:SDE_AI} we rule out the case $\beta < 0$,
considered by \cite{hefftawn04}, corresponding to the case where
location only normalization gives limits that are degenerate, with all
limiting mass at $\{0\}$. For simplicity, the theory developed in this
paper deals only with positive extremal association in Markov chains
and hence, in Corollary \ref{cor:SDE_AI}, the case $\alpha_t <0$
corresponding to the case where $X_0$ and $X_t$ exhibit negative
extremal association, is also ruled out.\ We note, however, that this
latter case can be easily accommodated by suitable transformations of
the marginal distributions, e.g., by standardizing margins to standard
Laplace distributions, see for example \cite{keefpaptawn13} and
\citet[Theorem 3,][]{papaetal17}.

\begin{corollary}[\em Fully asymptotically independent Markov chains
  with only scale norming]
  \label{cor:SDE_AI_scale}
  \begin{description}[wide=1\parindent] Let $\{X_t\,:\,t=0,1,\hdots\}$
    be a $k$-th order stationary Markov chain with unit-exponential
    margins.\ Suppose that Assumption $\BA$ holds with
    $b_t(x) = x^{\beta}$, $\beta \in (0,1)$, for $t=1,\dots,k-1$.\
    Suppose further that Assumption $\BB$ holds with the function $b$
    being continuous and $\beta-$homogeneous.\ Then convergence
    \eqref{eq:htc_scale} holds with $b_t(x)=x^{\beta_t}$,
    $\beta_t\in (0,1)$, $t\geq k$, where $\beta_t$ satisfies the
    recurrence relation
    $\log \beta_t = \log \beta +
    \log\left(\max_{i=1,\ldots,k}\beta_{t-i}\right)$.\ This gives the
    solution
    \begin{IEEEeqnarray*}{rCl} \log \beta_t &=& (\lfloor 1 +
      (t-1)/k\rfloor) \log \beta, \qquad t\geq k,
    \end{IEEEeqnarray*} where $\lfloor x \rfloor$ denotes the integer part
    of $x$.\ It follows that $\beta_t \to 0$ as $t\to \infty$. Also, for
    $t\geq k$ we have
    \[ \Res_{t} =
      \begin{cases} b(\Res_{t-k}, \bm 0_{k-1}) \,\varepsilon_{t} &
                                                                   \text{when $\text{mod}_k(t)=0$}\\ b(\Res_{t-k},\hdots, \Res_{t-1})
        \,\varepsilon_{t} & \text{when $\text{mod}_k(t) = 1$}\\
        b(\Res_{t-k},\hdots, \Res_{t-j}, \bm 0_{j-1}) \,\varepsilon_{t} &
                                                                          \text{when $\text{mod}_k(t) = j \in \{2,\ldots,k-1\}$},
      \end{cases}
    \] for a sequence $\{\varepsilon_t\}_{t=k}^\infty$ of
    i.i.d. random variables with distribution supported on $\RR_+$ and
    $\Res_0=1$ a.s..
  \end{description}
\end{corollary} As with Corollary \ref{cor:SDE_AI} we find a form of
geometric decay in the dependence parameters $\beta_t$ as $t$
increases, leading eventually to extremal independence ($\alpha_t=0$
and $\beta_t\to 0$) in the limit as $t\to \infty$, so that $X_t$
returns to the body of the distribution as it becomes independent of
$X_0$.\ However, in contrast to Corollary~\ref{cor:SDE_AI} where the
location parameter $\alpha_t$ changed with $t$, here it is the power
parameter $\beta_t$ of the scale function.\ In particular, $\beta_t$
decays geometrically to $0$ stepwise, with steps at every $k$ lags. At
time $t$ the resulting hidden tail chain depends only on the last $j$
values, with $j = \text{mod}_k(t)$.

\section{A class of recurrence relations for dependence parameters in
  asymptotically independent Markov chains with closed form solutions}
\label{sec:autocovariance}
The results of Section~\ref{sec:SDE} provide insight into the form of
the norming and updating functions of Theorems~\ref{thm:tailchain} and
\ref{thm:tailchain:nonneg}, not least for asymptotically independent
Markov chains where $(\alpha_t,\beta_t)\neq (1,0)$ for all $t > 0$.\ A
precise formulation of the location and scale parameters $\alpha_t$
and $\beta_t$ for $t\geq k$, however, depends on the form of
functionals $a(\cdot)$ and $b(\cdot)$ which is opaque even when these
are assumed to be homogeneous functionals.\ Motivated by examples
considered in Section~\ref{sec:examples}, here we give an explicit
characterization of the solution to the extremal Yule--Walker
equations~\eqref{eq:recurrence_slope} in Corollary~\ref{cor:SDE_AI}
for a parsimonious parametric subclass of $1$--homogeneous functionals
which imbeds many of the examples of Section
\ref{sec:examples_section}.

Consider the $1$--homogeneous function
$a_{\mathcal{M}}:\RR_+^k \to \RR_+$ defined by
\begin{equation} a_{\mathcal{M}}(\bm x) = c\,\left\{\gamma_1\,
    (\gamma_1 \, x_1)^{\delta} + \cdots + \gamma_k\, (\gamma_k
    \,x_k)^{\delta}\right\}^{1/\delta},  \quad\delta \in
  \overline{\RR}, \quad(\gamma_1,\ldots, \gamma_k) \in \SI_{k-1},
  \quad 0< c < \left(\sum_{i=1}^k
    \gamma_{i}^{1+\delta}\right)^{-1/\delta}.
  \label{eq:model}
\end{equation}
The functional $a_{\mathcal{M}}$ is continuous in
$\delta \in \overline{\RR}$ with
\begin{equation} \lim_{\delta \to 0} a_{\mathcal{M}}(\bm x ) = c_0\,
  x_1^{\gamma_1}\,x_2^{\gamma_2}\cdots \,x_k^{\gamma_k}, \qquad
  c_0=c\,\prod_{i=1}^n\gamma_i^{\gamma_i} < 1. 
  \label{eq:deltacont}
\end{equation} and $\lim_{\delta \to \pm \infty} a_{\mathcal{M}}( \bm x ) = a_{{\mathcal{M}},\pm
  \infty}\left(\gamma_1\,x_1, \gamma_2\,x_2,\hdots ,
  \gamma_k\,x_k\right) $ where
$a_{\mathcal{M}, \infty}(\bm x)= c_{\infty}\,\max_{i=1,\ldots,k} x_i$,
$a_{\mathcal{M},-\infty}(\bm x)=c_{-\infty}\min_{i=1}^k x_i$,
$c_{\infty}=c\,\max_i\gamma_i$, $c_{-\infty}=c\,\min_{i\,:\,\gamma_i > 0}\gamma_i$, where $0 < c_{-\infty}< 1$ and $0 < c_{-\infty}<1$.

\begin{proposition}
  \label{prop:closedform} Consider the function $a_{\mathcal{M}}$
  defined by expression~\eqref{eq:model}.\ Suppose that the $s\in\NN$
  distinct (possibly complex) roots of the characteristic polynomial
  \begin{equation*} x^k - c^\delta\, \gamma_k^{1+\delta}\,
    x^{k-1}-\cdots-c^\delta\,\gamma_1^{1+\delta} = 0
  \end{equation*} are $r_1,\hdots,r_s$ with multiplicities
  $m_1,\hdots,m_s$, $\sum_i m_i = k$.\ Then the solution of the
  recurrence relation~\eqref{eq:recurrence_slope} with $a(\bm x) =
  a_{\mathcal{M}}(\bm x)$ for all $\bm x\in\RR_+^k$, subject to the
  initial condition $(\alpha_1,\hdots,\alpha_{k-1}) \in (0,1)^{k-1}$, is
  \begin{equation} \alpha_t =\left(\sum_{i=1}^s ( C_{i0} + C_{i1}\,t +
      \cdots + C_{i,m_i-1} t^{m_i-1}) \,r_i^t \right)^{1/\delta}
    \qquad \text{for $t=k,k+1,\hdots$}
    \label{eq:soln_mult}
  \end{equation} where the constants $C_{i0},\hdots,C_{i,m_i-1}$,
  $i=1,\hdots,s$, are uniquely determined by the initial condition via
  the system of equations
  \[ \alpha_t = \left(\sum_{i=1}^s ( C_{i0} + C_{i1}\,t + \cdots +
      C_{i,m_i-1} t^{m_i-1}) \,r_i^t \right)^{1/\delta} \qquad \text{for
      $t=0,\hdots,k-1$},
  \] with $\alpha_0=1$.
\end{proposition}
From Corollary~\ref{cor:SDE_AI}, it follows that the sequence
$\{\alpha_t\}$ in Proposition \ref{prop:closedform} satisfies
$\alpha_t\to 0$ as $t\to \infty$.\ Let
$I_r=\{I\in\{1,\dots,s\}\,:\,|r_I|=\max_{i=1,\dots,s}|r_i|\}$. Under
the assumption that $|I_r|=1$, then we have that $\alpha_t$ in
expression \eqref{eq:soln_mult} satisfies
$\alpha_{t} \sim C_{I,m_I-1}\,t^{(m_I-1)/\delta} (r_I^{1/\delta})^t\to
0$, $I\in I_r$, $\delta \in \RR \sm \{0\}$, as $t\to
\infty$.\ 
\begin{remark} Although solution \eqref{eq:soln_mult} holds for any
  $\delta \in \overline{\RR}$, it is not evident what form the
  solution takes when $\delta=0$ or when $\delta=\pm\infty$.\ These
  cases are considered separately below.
  \begin{description}[wide=0\parindent]
    \item[Case $\delta \to 0$:] A logarithmic transformation in
limit~\eqref{eq:deltacont} results in the linear nonhomogeneous
recurrence relation
    \[ \log \alpha_t - \gamma_1 \log \alpha_{t-1} - \cdots - \gamma_k
\log \alpha_{t-k} = \log c-I(\bm \gamma)
    \] where $I(\bm\gamma)=-\sum_{i=1}^k\gamma_i\log \gamma_i$.\
Suppose that the $s\in\NN$ distinct (possibly complex) roots of the
characteristic polynomial
    \begin{equation} x^k - \gamma_k\, x^{k-1}-\cdots- \gamma_1 = 0,
    \end{equation} are $r_1,\hdots,r_s$ with multiplicities
$m_1,\hdots,m_s$, $\sum m_i=k$.\ Then the solution of recurrence
\eqref{eq:recurrence_slope} is
    \[ \alpha_t = \exp\left\{\sum_{i=1}^s \left( C_{i0} + C_{i1}\,t +
\cdots + C_{i,m_i-1} t^{m_i-1}\right) \,r_i^t + \frac{c- I(\bm
\gamma)}{\gamma_1+ 2\gamma_2 + \cdots + k\gamma_k} t\right\} \quad
\text{for $t=k,k+1,\hdots$}
    \] where the constants $C_{i0},\hdots,C_{i,m_i-1}$,
$i=1,\hdots,s$, are uniquely determined by the system of equations
    \[ \alpha_t = \exp\left\{\sum_{i=1}^s ( C_{i0} + C_{i1}\,t +
\cdots + C_{i,m_i-1} t^{m_i-1}) \,r_i^t+ \frac{c- I(\bm
\gamma)}{\gamma_1+ 2\gamma_2 + \cdots + k\gamma_k} t \right\} \quad
\text{for $t=0,\hdots,k-1$},
    \] with $\alpha_0=1$ and $\alpha_t \in (0,1)$ for $t=1,\ldots,
k-1$.
    
  \item[Case $\lvert\delta\rvert \to\pm \infty$:] Using forward
substitution, we have that for $\delta \to \infty$ the solution of
\eqref{eq:recurrence_slope} is
    \begin{equation} \alpha_{t}=c^t \,\max_{i=1,\ldots, k}(c_{t-i}
\,\alpha_{i-1}),\qquad c_{t-i}=\max \prod_{n=1}^k \gamma_{k+1-n}^{j_n}
\qquad t\geq k
      \label{eq:max_recursive}
    \end{equation} where the maximum for $c_{t-i}$ in
expression~\eqref{eq:max_recursive} is taken over $0\leq j_1\leq
\ldots \leq j_k\leq t-i$ such that $\sum_{m=1}^{t-i} m\,j_m = t-i$.\
The case $\delta \to -\infty$ is obtained by replacing the maximum
operator in expression \eqref{eq:max_recursive} by the minimum
operator.
  \end{description}
\end{remark} Although Corollary~\ref{cor:SDE_AI} assumes $a$ is
differentiable, it is useful to understand the behaviour of the
solution of the recurrence relation \eqref{eq:recurrence_slope}, for
$a=a_{\mathcal{M}}$, with $a_{\mathcal{M}}$ given in equation
\eqref{eq:model}, as $\lvert \delta \rvert \to \infty$.\ Recurrence
relationship \eqref{eq:max_recursive} shows that even when function
$a$ is non-differentiable, a simple recurrence is possible.\
\section{Results for kernels based on important copula classes}
\label{sec:examples_section}
\subsection{Strategy for finding norming functionals}
\label{sec:strategy} In Section~\ref{sec:examples}, we consider
examples of $k$th order Markov chains that are covered by the proposed
theory.\ In order to obtain the hidden tail chain for a given Markov
process one needs to derive appropriate norming functions $a_t, b_t$,
for time lags $t=1,\ldots, k-1$ and norming functionals $a$ and $b$ to
ensure that after an extreme event at time $t=0$, the joint
distribution of the renormalized $k$ states in the process, converges
weakly to a non-degenerate limit.\ The methods for finding $a_t, b_t$
for $t=1, \ldots ,k-1$ for all our examples is given by Theorem 1 in
\cite{hefftawn04}.\ To obtain the full sequence of norming functions
$a_t, b_t$, for $t=1,2,\ldots$, we need a new strategy that allows us
to also derive the functionals $a$ and $b$.\ We explain the strategy
here for the case $a\neq 0$ and note that the case $a=0$ and
$b \neq 1$ is handled in similar manner.

Assuming the conditional distribution of $X_k\mid \bm X_{0\,:\,k-1}$
admits a Lebesgue density almost everywhere, a similar argument as in
the proof of Theorem 1 in \cite{hefftawn04} guarantees that the
functionals $a$ and $b$ can be identified, up to type, by
\begin{equation} \lim_{\level\to \infty} \PR\left(X_k < a(\bm
    X_{\kk})~\Big |~ \bm X_{\kk}=\bm A_t(\level, \bm \res) \right) = p \in
  (0,1), \qquad \text{for all $\bm z\in\RR^k$},
  \label{eq:kernel_a}
\end{equation} and
\begin{dmath} b(\bm A_t(\level, \bm \res))=\frac{\PR\left(X_k > a(\bm
      X_\kk)~\big |~ \bm X_{\kk}=\bm A_t(\level,\bm
      \res)\right)}{\left[d\,\PR\left\{X_k \le y~\big|~ \bm
        X_{\kk}=\bm A_t(\level,\bm \res)\right\}/dy \right]|_{y=a(\bm
      A_t(\level, \bm \res))}},
  \label{eq:hazard}
\end{dmath} where
$\bm A_t(\level,\bm \res)=\bm a_{\tk}(\level)+\bm b_{\tk}(\level)
\,\bm \res$ with $\bm a_{\tk}$ and $\bm b_{\tk}$ as in Assumption
$\AB$.\ Expression \eqref{eq:hazard} can be cumbersome to use in
practice so we resort to asymptotic inversion in order to identify
$\bfunD$. In particular, to find a representative form for $\bfunD$ we
make an informed choice based on the leading order terms in an
asymptotic expansion of the conditional distribution in expression
\eqref{eq:kernel_a} to obtain
\begin{equation} \PR\left(X_k < a(\bm X_{\kk})+b(\bm
    X_{\kk})\,y~\Big |~ \bm X_{\kk}=\bm A_t(\level,\bm \res) \right) \wk
  K[-\infty,y], \qquad x\in\RR, \qquad \text{for all $\bm z\in\RR^k$}
  \label{eq:kernel}
\end{equation} where $K$ is a non-degenerate distribution on $\RR$.\
This strategy is illustrated step by step in examples 1--3 of
Section~\ref{sec:examples}, where the identification of leading order
terms is straightforward.
\subsection{Preliminaries}
\label{sec:prel}
To illustrate the results in Theorems~\ref{thm:tailchain}
and~\ref{thm:tailchain:nonneg}, we study the extremal behaviour of
$k$th order stationary Markov chains with unit exponential margins,
with transition probability kernels for the copula of $k+1$
consecutive values given in Section~\ref{sec:examples}. The examples
cover both extremal dependence types, not least they include two
subclasses of asymptotically dependent max-stable
distributions---namely those with logistic and H\"{u}sler--Reiss
dependence---and two classes of asymptotically independent
distributions---namely the Gaussian copula and the inverted max-stable
distribution with logistic dependence. We also consider an example of
a transition probability kernel for a second-order Markov chain using
max-stable distribution which exhibits a mixture of asymptotic
independence and asymptotic dependence over different lags, and does
not satisfy the assumptions of the theory developed in
Section~\ref{sec:th_res}.\ The theory which motivates these as copulae
does not matter here, we simply view them as a range of interesting
and well known copula families for which we study their extremes in a
Markov setting.\

Let $F$ denote the joint distribution function of a random vector
$\bm X=(X_0,\ldots,X_{k})$, assumed to be absolutely continuous
w.r.t. Lebesgue measure with unit exponential margins, that is,
$F_i(x)=F_E(x)=(1-\exp(-x))_+$, $i=0,\ldots,k$.\ Further, let
$[k]=\{0,1,\ldots,k\}$ and
$\mathscr{P}([k])=2^{[k]} \sm \{\emptyset\}$.\ The construction
of all Markov processes studied in this section, is summarised as
follows. Writing $C\,:\,[0,1]^{k+1}\to [0,1]$ for the copula of
$\bm X$, that is,
$C(\bm u)=F(F_E^\leftarrow(u_0),\ldots,F^\leftarrow_E(u_k))$, where
$\bm u=(u_0,\ldots,u_k) \in [0,1]^{k+1}$, we define the Markov kernel
$\pi_E\,:\, \borel(\RR^k)\to [0,1]$ of the stationary process by
\begin{IEEEeqnarray*}{rCl} \pi_E(\bm x_{\kk},
  x_k)&=&\left[\frac{\partial^{k}} {\partial u_{0} \cdots \partial
      u_{k-1}}\,C(\bm u_{\kk}, u_{k})\Big/ \frac{\partial^{k}}
    {\partial u_{0}\cdots \partial u_{k-1}}\,C(\bm u_{\kk},
    1)\right]\Bigg|_{\bm u_{\kplusonekplusone}=\bm
    v_{\kplusonekplusone}},
\end{IEEEeqnarray*} where
$\bm v_{\kplusonekplusone}=\{1-\exp(-\bm x_{\kplusonekplusone})\}_+$.\
Assuming the copula function satisfies appropriate conditions that
ensure stationarity \citep{joe15}, then the initial distribution
$F(\bm x_{\kk},\infty)$ is $k$ dimensional invariant distribution of a
Markov process with unit exponential margins and kernel $\pi_E$.\
    
In what follows, we set up the notation for the associatede transition
probability kernels that we study in Section~\ref{sec:examples} where
we derive norming functions and hidden tail chains and impose
conditions that ensure stationarity of the Markov chain in each
example.
\begin{description}[wide=0\parindent]      
\item[Gaussian copula:] Our first example concerns stationary Gaussian
  autoregressive processes with positive dependence transformed
  componentwise to have exponential marginal distributions.\ Let
  $\bm \Sigma \in \RR^{(k+1) \times (k+1)}$ be a $(k+1)$-dimensional
  Toeplitz correlation matrix, that is,
  $\bm \Sigma = (\rho_{\lvert i-j \rvert})_{1 \leq i,j \leq k+1}$ with
  $\rho_0=1$, $\rho_i > 0$, for $i=1,\ldots,k$, assumed to be positive
  definite.\ The distribution function of the standard
  $(k+1)$-dimensional Gaussian with mean $\bm 0_{k+1}$ and variance
  $\bm \Sigma$, in exponential margins, is
  \[ F(\bm x_{\kplusonekplusone})=
    \int_{-\bm\infty}^{\Phi^{\leftarrow}\{1-\exp(-\bm
      x_{\kplusonekplusone})\}} \frac{\lvert \bm Q
      \lvert^{1/2}}{(2\pi)^{k/2}}\, \exp\left(- \bm s^{\top}\,\bm Q\,
      \bm s/2\right)~\mathrm{d}\bm s \qquad \bm s=(s_0,\ldots,
    s_{k})^\top, \bm x_{\kplusonekplusone} \in \RR^{k+1},
  \] where $\Phi^{\leftarrow}\,:\,[0,1]\to \RR$ denotes the quantile
  function of the standard normal distribution function $\Phi(\cdot)$
  and $\bm Q=\bm \Sigma^{-1} = (q_{i-1,j-1})_{1\leq i,j \leq k+1}$ is
  a symmetric positive definite matrix.\ This joint distribution gives
  the transition probability kernel
  \begin{IEEEeqnarray*}{rCl} \pi_E(\bm x_{\kk}, x_k) &=&
    \pi_G\left\{\Phi^\leftarrow\{1-\exp(-\bm x_{\kk})\},
      \Phi^\leftarrow\{1-\exp(-x_{k})\}\right\}, (\bm x_{\kk}, x_k) \in
    \RR^{k} \times \RR
  \end{IEEEeqnarray*} where the kernel $\pi_G$ is the full conditional
  distribution function of the multivariate normal given by
  \[ \pi_G(\bm x_{\kk}, x_k) = \Phi\left[q_{kk}^{1/2}\left\{x_k-
        \sum_{t=0}^{k-1}\left(-\frac{q_{tk}}{q_{kk}}\right)\,x_t
      \right\}\right].
  \] The condition $\rho_i > 0$, for $i=1,\ldots, k$ appears
  restrictive but is made to simplify the presentation.\ If we worked
  with standard Laplace marginals, instead of exponential marginals,
  as say in \cite{keefpaptawn13}, the presentation would be equally
  simple for any values $\lvert \rho_i \rvert > 0$, $i=1,\ldots,k$, of
  the correlation matrix $\bm \Sigma$.
\item[Max-stable copula:] A class of transition probability kernels
  for asymptotically dependent Markov processes is obtained from the
  class of multivariate extreme value distributions \citep{resn87}. The
  $k+1$ dimensional distribution function of the multivariate extreme
  value distribution with exponential margins is given by
  \begin{equation} F(\bm x_{\kplusonekplusone}) = \exp(-V(\bm
    y_{\kplusonekplusone})), \quad \text{where
      $\bm y_{\kplusonekplusone} = T(\bm x_{\kplusonekplusone}):=
      -1/\log(1- \exp(-\bm x_{\kplusonekplusone}))$}, \quad \bm
    x_{\kplusonekplusone} \in \RR^{k+1}_+,
    \label{eq:MEV}
  \end{equation} with $V\,:\,\RR_+^{k+1}\to \RR_+$ a $-1$-homogeneous
  function, known as the exponent measure, given by
  \begin{equation} V(\bm y_{\kplusonekplusone})=\int_{\SI_{k}}
    \max_{i=0, \ldots, k}\left(\frac{\omega_i}{y_i}
    \right)~H(\mathrm{d}\bm \omega),
    \label{eq:exp_measure}
  \end{equation} where $H$ is a Radon measure on $\SI_k$
  that has total mass
  $k+1$ and satisfies the moment constraints $\int_{\SI_{k}} \omega_i
  H(\mathrm{d}\bm \omega) = 1$, for $i =
  0,\ldots,k$.\ Throughout this section, we assume that
  $V$ has continuous mixed partial derivatives of all orders which
  ensures that a density for
  $F$ exists \citep{coletawn91}.\ For any $J \subseteq
  [k]$, we write
  $V_J$ to denote the higher-order partial derivative
  $\partial^{\lvert J \lvert} V(\bm x_{\kplusonekplusone})/\prod_{j
    \in J}\partial x_j$ and
  $\Pi_{m}$ for the set of partitions of $[m]$, with
  $m=0,\ldots,k$.\ Furthermore, for a vector $\bm z = (\bm
  x_{0\,:\,m}, \bm x_{m+1\,:\,k})$, we write $V(\bm x_{0\,:\,m}, \bm
  x_{m+1\,:\,k})=V(\bm z)$.  For $m = 0,\ldots, k-1$ we define $V(\bm
  x_{0\,:\,m}, \,\infty \cdot \bm 1_{k-m}):=\lim_{\bm x_{m+1\,:k} \to
    \infty \cdot \bm 1_{k-m}}V(\bm x_{0\,:\,m}, \bm x_{m+1\,:\,k})$,
  and for $J\subseteq [m]$, we define
  $V_{J}(\bm x_{0\,:\,m}, \infty\cdot\bm 1_{k-m}):=\partial^{\lvert J
    \rvert}\,V(\bm x_{0\,:\,m}, \infty \cdot \bm 1_{k-m})/\prod_{j\in
    J}\partial x_j$.
  Stationarity is achieved by requiring that the
  distributions of $\{X_i\,:\,i\in A\}$ and $\{X_i\,:\,i\in B\}$ are
  identical for any set $B$ that is a translate of the set $A$, that is,
  when there exists a unique $\omega \in \ZZ$ such that
  $B = \{x + \omega\,:\, x\in A\}$.\ Hence, to ensure stationarity in
  the time series outlined in Examples \ref{ex:ims}--\ref{ex:alog}, we
  assume for the $(k+1)$-variate exponent measures associated to
  multivariate extreme value and inverted max-stable copula models
  that
  \begin{equation} \lim_{\bm x_{[k]\sm A} \to \infty} V(\bm
    x)\,\Big|_{\bm x_A=\bm y} = \lim_{\bm x_{[k]\sm B} \to
      \infty} V(\bm x)\,\Big|_{\bm x_B=\bm y}, \qquad \bm y\in
    \RR_+^{\lvert A \lvert}.
    \label{eq:statV}
  \end{equation} whenever $B$ is a translate set of $A$, with $A, B
  \subseteq [k]$.  The transition probability kernel induced by the
  multivariate extreme value copula, in exponential margins, is
  \begin{equation} \pi_E(\bm x_{\kk}, x_k) = \frac{\left[ \sum_{p \in
          \Pi_{k-1} }(-1)^{\lvert p \lvert} \prod_{J \in p} V_J(\bm
        y_{\kplusonekplusone})\right]} {\left[\sum_{p \in
          \Pi_{k-1}}(-1)^{\lvert p\lvert} \prod_{J \in p} V_J(\bm
        y_{\kk}, \infty)\right]} \exp\left\{V(\bm y_{\kk}, \infty) -
      V(\bm y_{\kplusonekplusone})\right\}
    \label{eq:mstranskernel}
  \end{equation} where $(\bm x_{\kk}, x_k) \in \RR^{k} \times \RR$ and
  with $\bm y_{\kplusonekplusone}$ as defined in
  expression~\eqref{eq:MEV}.
\item[Inverted max-stable:] Lastly, the final class of transition
  kernels is based on the class of inverted max-stable distributions
  \citep{ledtawn97, papatawn16}.\ The specification of this
  distribution is most elegantly expressed in terms of its
  $(k+1)$-dimensional survivor function. In exponential margins, this
  is expressed as
  \begin{equation} \overline{F}(\bm x_{\kplusonekplusone}) =
    \exp(-V(1/\bm x_{\kplusonekplusone})),
    \label{eq:inverted_ms}
  \end{equation} where $V$ denotes an exponent measure as defined by
  expression \eqref{eq:exp_measure}. To ensure stationarity, $V$ is
  assumed to satisfy conditions \eqref{eq:statV}.\ This distribution
  gives the transition probability kernel
  \begin{IEEEeqnarray}{rCl} \pi^{\text{inv}}(\bm x_{\kk}, x_k) &=& 1-
    \pi_E[-\log\{ 1- \exp(-\bm x_{\kk})\}, -\log\{ 1- \exp(-1/x_k)\}],
    \label{eq:ims_kernel}
  \end{IEEEeqnarray} where $(\bm x_{\kk}, x_k) \in \RR^{k} \times \RR$
  and $\pi_E$ as given by equation~\eqref{eq:mstranskernel}.
\end{description}

\subsection{Examples}
\label{sec:examples} We illustrate examples for a range of $k$th order
Markov processes and show how they all fit with the theory
developed. Central to all examples is the weak convergence of the
renormalized initial distribution and the renormalized transition
probability kernel.\ Under suitable regularity conditions, this
permits the complete characterisation of the hidden tail chain.\ Our
proofs for the weak convergence of each transition probability kernel
are presented in
Appendices~\ref{sec:kernel_MVN}--\ref{sec:HR_convergence} where we
implement step by step the strategy we outlined in
Section~\ref{sec:strategy}.\ The behaviour of each (hidden) tail chain
is illustrated in Figure \ref{fig:tail_chains} using simulation, for
specific examples of the classes of processes we cover.

Example 4 is an example of a Markov process not covered by the theory
developed thus far which exhibits irregular behaviour; this new
behaviour permits the possibility of sudden switches from extreme to
non-extreme states and vice versa. In this setting, a novel form of
normalization of the transition probability kernel is required which,
together with the associated hidden tail chain, carries information
about the mechanism that governs the sudden transitions. Although the
development of general theory for this type of process is beyond the
scope of this paper, for our example we derive the hidden tail chain
and illustrate its behaviour in Figure \ref{fig:alog}. We only mention
in passing that for this example, the strategy that is implemented is
similar to the strategy presented in Section \ref{sec:strategy}.
\begin{example}[\em Stationary Gaussian autoregressive
  process---positive dependence]
  \label{ex:gaussian} \normalfont \citet[Section~8.6]{hefftawn04}
  showed that Assumption $\AA$ holds with norming functions
  $a_i(v)=\rho_i^2 v$, $b_i(v)=v^{1/2}$, that is,
  $\alpha_i = \rho_i^2$ and $\beta_i=1/2$, for $i=1,\ldots,k-1$ and
  initial limiting distribution
  $G(\bm \res_\kkminone)=\Phi_k(\bm \res_\kkminone; \Sigma_0)$,
  $\bm \res_\kkminone \in \RR^{k-1}$, where $\Phi_k(\cdot;\bm \Sigma_0)$ denotes
  the cumulative distribution function of the $k$-dimensional
  multivariate normal distribution with mean vector zero and
  covariance matrix
  $\Sigma_0=(2 \rho_{i}\,\rho_{j}(\rho_{\lvert j-i\lvert}-
  \rho_i\,\rho_j))_{1\leq i,j\leq k-1}$.
  \noindent Appendix \ref{sec:kernel_MVN} shows Assumption $\AB$ holds
  with norming functionals
  \begin{IEEEeqnarray}{lll} a(\bm u)=\Big(\sum_{i=1}^{k} \phi_i \,
    u_i^{1/2}\Big)^2, &\qquad b(\bm u)=a(\bm u)^{1/2}, & \qquad \bm
    u=(u_1,\ldots,u_k)\in \RR_+^k,
    \label{eq:a_gaussian}
  \end{IEEEeqnarray} where $\phi_{i} = -q_{k-i,k}/q_{kk}$,
  $i=1,\ldots, k$ denote the first $k$ partial autocorrelation
  coefficients of the stationary Gaussian process (on Gaussian
  margins), and the transition probability kernel of the renormalized
  Markov chain converges weakly to the distribution
  \begin{equation} K(x)=\Phi\left\{(q_{kk}/2)^{1/2}\,x \right\},
    \qquad x\in \RR.
    \label{eq:limit_Gaussian_kernel}
  \end{equation} Corollary~\ref{cor:SDE_AI} asserts that a suitable
  location normalization after $t\geq k$ steps has $\alpha_t=a(\bm
  \alpha_{\tk}) = \rho_t^2$ and $\beta_t={1/2}$, with $\rho_t=
  \sum_{i=1}^k \phi_i \, \rho_{t-i}$, for $t\geq k$. This leads to the
  scaled autoregressive tail chain
  \begin{equation} \Res_t = \rho_t \, \sum_{i=1}^k
    \frac{\phi_i}{\rho_{t-i}} \, \Res_{t-i} + \rho_t \, \varepsilon_t
    \qquad t \geq k,
  \end{equation} and $\{\varepsilon_t\}_{t=k}^\infty$ is a sequence of
  i.i.d. random variables with distribution $K$ given by
  expression~\eqref{eq:limit_Gaussian_kernel}.

  The hidden tail chain is a non-stationary $k$th order autoregressive
  Gaussian process with zero mean and auto-covariance function
  $\text{cov}(\Res_{t-s}, \Res_t) = (2 \rho_{t-s}\,\rho_{t}(\rho_{s}-
  \rho_{t-s}\,\rho_{t}))$ when $t\neq s$.\ The variance of the process
  satisfies $\text{var}(\Res_t)=\mathcal{O}(\rho_t^2)$ as
  $t\to \infty$, hence showing that the process degenerates to 0 in
  the limit as $t\to \infty$.\ This long-term degenerative behaviour
  is shown for a special case of this hidden tail chain in panel $(a)$
  of Figure~\ref{fig:tail_chains}.
\end{example}

\begin{remark}  The location functional $a$ in
expression~\eqref{eq:a_gaussian} can be written in form
\eqref{eq:model} with $c = \left(\sum_{i=1}^k\phi_i^{2/3}\right)^3$
and $\gamma_i = \phi_i^{2/3}/\sum_{i=1}^k \phi_i^{2/3}$.
\end{remark}

\begin{example}[\em Inverted max-stable copula with logistic
  dependence]
  \label{ex:ims} \normalfont Consider a stationary $k$th order Markov
chain with a $(k+1)$-dimensional survivor function
\eqref{eq:inverted_ms} and exponent measure of logistic type given by
      \begin{IEEEeqnarray}{rCl} V(\bm y_{\kplusonekplusone})&=& \lVert
\bm y_{\kplusonekplusone}^{-1/\alpha}\rVert^\alpha \qquad \bm
y_{\kplusonekplusone} \in\RR_+^d, \label{eq:logistic}.
\end{IEEEeqnarray} where $\alpha \in (0,1)$.\
\citet[Section~8.5]{hefftawn04} showed that Assumption $\BA$ holds
with $b_i(v)=v^{1-\alpha}$, that is, $\beta_i=1-\alpha$ for
$i=1\ldots, k-1$, and limiting initial distribution
$G(\bm \res)= \prod_{i=1}^{k-1}\{1-\exp(-\alpha \res_i^{1/\alpha})\}$,
$\bm \res \in (0,\infty)^{k-1}$.\ Appendix~\ref{sec:kernel_inv_log}
shows that Assumption $\BB$ holds with normalizing functionals
  \begin{IEEEeqnarray}{ll} a(\bm u)= 0,\qquad b(\bm u)=\lVert \bm
u^{1/\alpha} \rVert^{\alpha\,(1-\alpha)},& \qquad \bm
u=(u_1,\ldots,u_k)\in\RR_+^k,
    \label{eq:ab_invlog}
  \end{IEEEeqnarray} and the transition probability kernel of the
  renormalized Markov chain converges weakly to the distribution
  \[
    K(x) = 1-\exp(-\alpha \,x^{1/\alpha}), \qquad x \in (0,\infty),
  \]
  as $u \to \infty$.\ Corollary~\ref{cor:SDE_AI_scale} asserts that a
  suitable normalization after $t\geq k$ steps is $a_t(v)=0$,
  $\log b_t(v)=\left({(1-\alpha)^{1 + \lfloor (t-1)/k\rfloor}}\right)
  \, \log v$, which leads to the scaled random walk hidden tail chain
  \[
    \Res_{t} = \begin{cases}
                 \lVert (\Res_{t-k}, \bm 0_{k-1})^{1/\alpha}\rVert^{\alpha (1-\alpha)} \,\varepsilon_{t} & \text{when $\text{mod}_k(t)=0$}\\
                 \lVert\bm \Res_{t-k\,:\,t-1}^{1/\alpha}\rVert^{\alpha (1-\alpha)} \,\varepsilon_{t} & \text{when $\text{mod}_k(t)=1$}\\
                 \lVert(\bm \Res_{t-k\,:\,t-j}^{1/\alpha}, \bm 0_{j-1})\rVert^{\alpha
                 (1-\alpha)} \,\varepsilon_{t} & \text{when
                                                 $\text{mod}_k(t) = j\in\{2,\ldots,k-1\}$},
               \end{cases}
             \]    
             where $\{\varepsilon_t\}_{t=k}^\infty$ is a sequence of
             i.i.d. random variables with distribution $K$.

    This hidden tail chain is a non-stationary process, specifically,
    after a logarithmic transfomation, it is a non-stationary
    non-linear $k$th order autoregressive process.\ The first element
    of the process is $\Res_0=1$ a.s., the next $k-1$ elements of the
    process $\bm \Res_{\kkminone}$ are i.i.d. positive random variables
    with distribution function $K(x)$, $x > 0$.\ Subsequent elements
    $\Res_t$, for $t\geq k$, have distributions that vary both in mean
    and variance.\ We see that for $b$ given by
    expression~\eqref{eq:ab_invlog} and any $\bm x_{1:k} \in \RR^k_+$,
    then
    $b(\bm x_{1:k}) > b( \bm x_{2:k}, 0) > \cdots > b(x_k, \bm
    0_{k-1})$.  This leads to oscillating behaviour which is shown for
    a special case of this hidden tail chain in panel $(b)$ of
    Figure~\ref{fig:tail_chains}. In particular, the mean and variance
    of the hidden tail chain both can be seen to decrease in a segment
    of $k$ consecutive time points $(s, s+1, \ldots, s+k-1)$ for any
    $s > k$ such that $\text{mod}_k(s) = 1$.
  \end{example}
    
  \begin{example}[\em Multivariate extreme value copula--all mass on
    interior of simplex]
    \label{ex:mev}
    \normalfont \citet[Section~8.4]{hefftawn04} showed that if the
    spectral measure $H$ in expression \eqref{eq:exp_measure} places
    no mass on the boundary of $\SI_k$, then Assumption $\AA$ holds
    for distribution \eqref{eq:MEV} with norming functions $a_i(v)=v$,
    $b_i(v)=1$, for $i=1,\ldots,k-1$ and limiting distribution
    \begin{equation}
      G(\bm \res_{\kkminone}) = -V_{0} [\exp\{(0,\bm \res_{\kkminone})\}, \infty],
      \qquad \bm \res_{\kkminone}\in \RR^{k-1}.
      \label{eq:initial_ms}
    \end{equation} 
    Appendix~\ref{sec:proof_ms} shows that for any functional $a$
    satisfying condition~\eqref{eq:ams_condition}, a slighthly weaker
    form of Assumption $\AB$ holds (remembering here that
    $b\equiv 1$), in the sense that the limit distribution $K$, which
    is given here by
    \begin{IEEEeqnarray}{rCl}
      K(x; \bm \res_{\kkminone})&=& \ddfrac{V_{\kk} [\exp(\bm
        \res_{\kk}), \exp(a(\bm \res_{\kk}) + x) ]}{V_{\kk}
        [\exp(\bm \res_{\kk}),\infty]},\qquad x\in\RR,
      \label{eq:ms_convergence_example}
    \end{IEEEeqnarray}
    depends on $\bm \res_{\kkminone}\in\RR^{k-1}$.\ Without additional
    assumptions, it is impossible to elicit additional information
    about $a$ or the distribution $K$ in expression
    \eqref{eq:ms_convergence_example}.\ However, because both
    $V_{\kk}(\cdot,\infty)$ and $V_{\kk}(\cdot)$ are
    $-(k+1)$--homogeneous functions, the map
    \begin{equation}
      \RR^{k+1}_+\ni\bm y_{\kplusonekplusone}\mapsto
      V_{\kk}(\bm y_{\kk}, y_k)/V_{\kk}(\bm
      y_\kk,\infty)\in\RR_+
      \label{eq:map_0_hom}
    \end{equation}
    is $0$--homogeneous and this latter property restricts the
    possible forms the transition kernel
    $V_{\kk}\,:\,\RR_+^{k}\times \borel(\RR_+)\to \RR_+$ can
    take.\ One such simple form which is seen to hold for a wide
    variety of parametric models for the exponent measure is given by
    Property $K_1$ below.
    \begin{description}[wide=0\parindent] \em
    \item[Property $K_1$.] There exists a continuous function
      $a_P\,:\,\RR_+^k\to \RR_+$ which is $1$--homogeneous, and a
      non-degenerate distribution function $K_P$ on $\RR_+$, such
      that,
      \begin{itemize}
      \item[$(i)$] $K_P^\leftarrow(p^\star)=1$ for some
        $p^\star\in(0,1)$, where
        $K_P^\leftarrow(p)=\inf\left\{x\in\RR \,:\, K_P(x)>
          p\right\}$;
      \item[$(ii)$]
        $V_{\kk}(\bm y_{0\,:\,k-1}, y_k) = V_{\kk}(\bm y_{0\,:\,k-1} ,
        \infty) \, K_P\{y_k/a_P(\bm y_{0\,:\,k-1})\}$, for all
        $\bm y_{\kplusonekplusone}\in\RR_+^{k}$.
      \end{itemize}
    \end{description}
    Under Property $K_1$, some additional information about the
    location functional $a$ and the limit distribution $K$ can be
    given. Proposition \ref{prop:max_stable} gives a simple method for
    below.
    \begin{proposition}
      \label{prop:max_stable}
      Suppose that for a max-stable distribution with exponent measure
      $V$, Property $K_1$ holds.\ Let
      $a(\bm x_{\kk})=\log [a_P\{\exp (\bm x_{\kk})\}]$,
      $\bm x_{\kk}\in\RR^k$ and assume there exists a right-inverse
      $\RR\times\RR^k_+\ni (q,\bm y_{\kk})\mapsto
      V_{\kk}^\leftarrow(q;\bm y_{\kk})\in\RR_+$ such that
      $V_{\kk}\{\bm y_\kk, V_{\kk}^{\leftarrow}(q;\bm y_\kk)\} = q$
      for all $q$ and $\bm y_\kk$ in the domain of
      $V_{\kk}^{\leftarrow}$.\ Then 
      \begin{description}[wide=0\parindent]
      \item[$(i)$] $a$ satisfies property~\eqref{eq:ams_condition} and
        for all $\bm x_\kk\in\RR^k$,
        \begin{equation}
          a(\bm x_\kk)= \log V_{\kk}^{\leftarrow}\{p^\star \,
          V_{\kk}(e^{\bm x_\kk},\infty);e^{\bm x_\kk}\}.
          \label{eq:a_max_stable}
        \end{equation}
      \item[$(ii)$] Assumption $\AB$ holds with normalizing
        functionals $a$ and $b\equiv 1$, and $K(x) = K_P (e^x)$,
        $x\in\RR$.
      \item[$(iii)$] For all $x\in\RR$,
        \[
          K(x) = \frac{V_{\kk} \{\exp(\bm \res_{\kk}^\star),
            \exp(x)\}} {V_{\kk}\{\exp(\bm \res_{\kk}^\star),
            \infty\}},
        \]
        where $\bm \res_\kk^\star$ satisfies $a_P(\bm z_\kk^\star)=1$.
      \end{description}
    \end{proposition}
    
    In what follows we treat two important special cases for the
    distribution \eqref{eq:MEV}. 
    These special cases are the multivariate
    extreme value distribution with logistic dependence
    \citep{beiretal04} and H\"{u}sler--Reiss dependence
    \citep{husedavi13}, which we cover below.
    \begin{description}[wide=0\parindent]
    \item[Logistic dependence:] The exponent measure of the
      $(k+1)$-dimensional max-stable distribution with logistic
      dependence is given in expression \eqref{eq:logistic} where
      $\alpha\in(0,1)$ controls the strength of dependence, with
      stronger dependence as $\alpha$ decreases. The case $\alpha=1$
      is excluded as that corresponds to independence.\ The initial
      limiting distribution \eqref{eq:initial_ms} is
      $G(\bm \res_{\kkminone})= \left\{1 + \lVert \exp\left(-\bm
          \res_{\kkminone}/\alpha\right) \rVert\right\}^{\alpha - 1}$,
      $\bm \res_{\kkminone}\in \RR^{k-1}$.\
      Appendix~\ref{sec:logistic_convergence} shows that Assumption
      $\AB$ holds with normalizing functionals
      \begin{IEEEeqnarray*}{lll}
        a(\bm u)=-\alpha\,\log\left( \lVert\exp\left(-\bm
            u/\alpha\right)\rVert\right), &\qquad b(\bm u)=1,
        & \qquad \bm u=(u_1,\ldots,u_k)\in\RR_+^k,
      \end{IEEEeqnarray*}
      and the transition probability kernel of the renormalized Markov chain
      converges weakly to the distribution
      \[
        K(x) = \{1 + \exp(-x/\alpha)\}^{\alpha-k}\qquad x \in \RR.
      \]
      Corollary~\ref{cor:SDE_AD} asserts that a suitable normalization
      after $t\geq k$ steps is $a_t(v)=v$, $b_t(v)=1$, which leads to
      the tail chain
      \begin{equation}
        \Res_t = -\alpha \, \log \, \lVert \exp(- \bm
        \Res_{t-k\,:\,t-1}/\alpha) \rVert  + \varepsilon_t, \qquad t=k,k+1,\dots,
      \end{equation}
      where $\{\varepsilon_t\}_{t=k}^\infty$ is a sequence of
      i.i.d. random variables with distribution $K$. Note that the
      tail chain can also be expressed as
      \[
      \Res_t = \Res_{t-k}-\alpha \, \log \, \lVert \exp\{- (0, \Res_{t-k}\bm 1_{k-1} - \bm
        \Res_{t-k+1\,:\,t-1})/\alpha\} \rVert  + \varepsilon_t,\qquad t=k,k+1,\dots.
      \]
      Here the hidden tail chain is identical to the tail chain as
      $a_t(x)=x$ and $b_t(x)=1$ for all $t=1,2,\dots$. 
      When $k=1$, the tail chain can be seen to reduce to the random
      walk results of \cite{Smith92} and \cite{perf94}, but when
      $k>1$, the tail chain behaves like a random walk with an
      additional factor which depends in a non-linear way on the
      ``profile''
      $ \Res_{t-k}\bm 1_{k-1}- \bm \Res_{t-k+1\,:\,t-1}$, of the
      $k-1$ previous values.\ 
    \item[H\"{u}sler--Reiss dependence:] The exponent measure of the
      $(k+1)$-dimensional max-stable distribution with
      H\"{u}sler--Reiss dependence is
      \begin{IEEEeqnarray*}{rCl}
        V(\bm y_{\kplusonekplusone})&=& \sum_{i=0}^k
        \frac{1}{y_i}\,\Phi_{k}\left[\left\{\log(y_j/y_i) + \sigma^2 -
            \sigma_{ij}\right\}_{j\neq i};\,\bm \Sigma^{(i)}\right],
        \qquad \bm y_{\kplusonekplusone}\in\RR_+^{k+1}
    \end{IEEEeqnarray*}
    where $\Phi_{k}(\,\cdot\, ;\,\bm \Sigma^{(i)})$ denotes the
    multivariate normal distribution function with mean zero and
    covariance matrix
    $\bm \Sigma^{(i)} = \bm T_i\, \bm \Sigma \, \bm T_i^\top$, for
    $\bm \Sigma = (\sigma_{ij})_{i,j=0}^k$ a positive definite
    Toeplitz covariance matrix with common diagonal elements
    $\sigma_{ii}=\sigma^2$. Here $\bm T_i$ is a $k\times (k+1)$ matrix
    with the $(i+1)$th column having $-1$ for each entry and the other
    columns being the $k$ standard orthonormal basis vectors of
    $\RR^k$, that is,
    \[
      \bm T_i = \left(\begin{matrix}
          1&0& \cdots & 0 & -1 & 0 & \cdots & 0\\
          0&1& \cdots & 0 & -1 & 0 & \cdots & 0  \\
          &\dots& \cdots & \cdots & \cdots & \cdots & \cdots &\\
          0 & 0 & \cdots & 0 & -1 & 0  & \cdots &1
        \end{matrix}\right),\qquad i=0,\ldots,k.
    \]
    The matrix $\bm \Sigma$ controls the strength of dependence, with
    larger values for $\sigma_{ij}$ indicating stronger dependence
    between the associated elements of the random vector. The initial
    limiting distribution \eqref{eq:initial_ms} is
    $G(\bm x)=\Phi_{k-1}[\bm x-\{-\text{diag}(\bm
    \Sigma^{(0)})/2\};\bm \Sigma^{(0)}]$, $\bm x \in \RR^{k-1}$.\
    Appendix~\ref{sec:HR_convergence} shows Assumption $\AB$ holds with
    normalizing functionals
      \[
        a(\bm u) = -\tau \bm K_{01}^\top\bm C \bm K_{10} \cdot \bm u,
        \qquad b(\bm u)=1, \qquad \bm u = (u_1,\ldots, u_k)\in\RR_+^k
      \]
      where the quantities $\tau, \bm q, \bm C, \bm K_{10}$ and
      $\bm K_{01}$ are defined in Appendix~\ref{sec:HR_convergence}.\
      The transition probability kernel of the renormalized Markov chain converges
      weakly to the distribution
      \[
        K(x) = \Phi\{(x/\tau)+  (\bm K_{01}^\top \bm \Sigma^{-1}\bm
        1_{k+1}^\top)/(\bm 1_{k+1}^\top \bm q)\} \qquad x \in \RR.
      \]
      Corollary~\ref{cor:SDE_AD} asserts that a suitable normalization
      after $t\geq k$ steps is $a_t(v)=v$, $b_t(v)=1$, which leads to
      the hidden tail chain (identical to the tail chain) of
      \begin{IEEEeqnarray*}{rCl}
        \Res_t &=& -\tau \bm K_{01}^\top\bm C \bm K_{10} \cdot \bm
        \Res_{t-k\,:\,t-1} + \varepsilon_t,\qquad t=k,k+1,\dots,
      \end{IEEEeqnarray*}
      where $\{\varepsilon_t\}_{t=k}^\infty$ is a sequence of
      i.i.d. random variables with distribution $K$.\ Note that the
      tail chain can also be expressed as
      \[
        \Res_t = \Res_{t-k}+ \tau \bm K_{01}^\top\bm C \bm K_{10} \cdot (0,
        \Res_{t-k}\bm 1_{k-1} - \bm \Res_{t-k+1\,:\,t-1}) + \varepsilon_t,
        \qquad t=k,k+1,\dots,
      \]
       which shows, similarly with the
      logistic copula, that the tail chain behaves like a random walk
      with an additional factor which depends linearly on the
      ``profile''
      $\Res_{t-k} \bm 1_{k-1}-\bm \Res_{t-k+1\,:\,t-1}$, of the
      $k-1$ previous values, so differs from the previous example in
      this respect.\
    \end{description}

    Panels $(c)$ and $(d)$ in Figure~\ref{fig:tail_chains} show an
    almost linear behaviour for two special cases of the tail chains
    presented. Although the copulae used to derive both tail chains
    have the same extremal coefficient \citep[see][]{schltawn03},
    ensuring that the core level of extremal dependence is common in
    both, the decay rate of the two processes is markedly
    different. This shows that the type of drift function and
    distribution for the innovation term $\varepsilon_t$ might impact
    upon the characteristics of transitioning from an extreme state to
    the main body of the process.
  
    \begin{figure}[htpb!]
      \centering
      \includegraphics[scale=1]{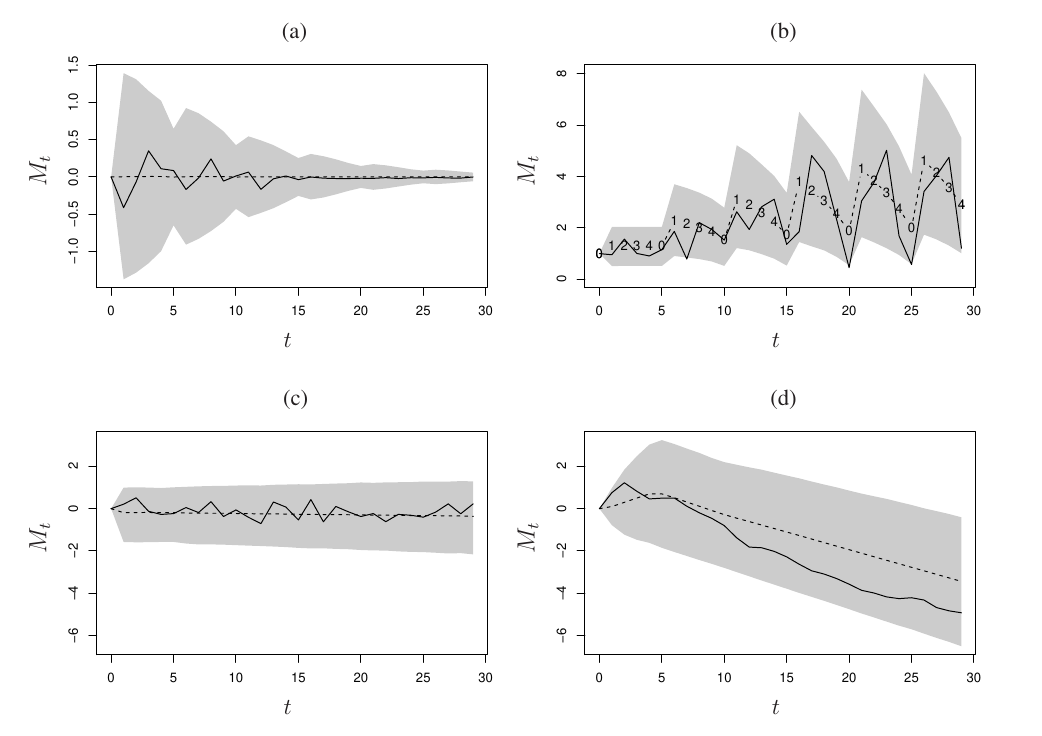}
      \caption{Properties for each hidden
        tail chain for Examples 1--3. Presented for each chain are:
        pointwise 2.5\% and 97\% quantiles of the sampling
        distribution (shaded region), mean of the sampling
        distribution (dashed line) and one realization from the
        (hidden) tail chain (solid line). The copula of
        $\bm X_{\kplusonekplusone}$ used to derive the (hidden) tail
        chain comes from: (a) standard multivariate Gaussian copula
        with Toeplitz positive-definite covariance matrix $\Sigma$
        generated by the vector $(1, 0.70, 0.57, 0.47, 0.39, 0.33)$,
        (b): inverted logistic with
        $\alpha=\log(\bm 1_{k+1}^\top\,\Sigma^{-1}\,\bm 1_{k+1})/\log
        k=0.27$, (c) logistic copula with $\alpha=0.32$. The value of
        the function $\text{mod}_k(t)$ is also highlighted on the mean
        function of the time series with numbers ranging from 0 to 4
        for all $t$. (d): H\"{u}sler--Reiss copula with Toeplitz
        positive-definite covariance matrix generated by the vector
        $(1, 0.9, 0.7, 0.5, 0.3, 0.1)$. The parameters for all copulas
        are chosen such that the coefficient of residual tail
        dependence $\eta$ \citep{ledtawn97} and the extremal
        coefficient $\theta$ \citep{beiretal04} are equal for the
        copulas in panels (a) and (b), and (c) and (d), respectively.}
      \label{fig:tail_chains}  
    \end{figure}
  \end{example}

  \begin{example}[Multivariate extreme value copula
     with asymmetric logistic structure
    \label{ex:alog}
    \citep{tawn90}]\normalfont This is a second-order Markov process
    for which Assumptions $\AA$ and $\AB$ fail to hold and it has more
    complicated structure than we have covered so far where weak
    convergence on $\RR^k$ was studied.\ In this example, the weak
    convergences in Assumptions $\AA$ and $\AB$ no longer hold on
    $\RR^{k-1}$ and $\RR$ (cf. Remark \ref{rem:dist_support}), but on
    $\overline{\RR}^{k-1}$ and $\overline{\RR}$, respectively.\ The
    example is a special case of a stationary Markov chain with
    transition probability kernel \eqref{eq:mstranskernel} and
    exponent measure given by
    \begin{IEEEeqnarray}{rCl}
      V(x_0,x_1,x_2)&=& \theta_0\, x_0^{-1} + \theta_1\,
      x_1^{-1}+ \theta_{2}\, x_2^{-1} +\nonumber\\
      && + \theta_{01}\left\{ \left(x_0^{-1/\nu_{01}}+
          x_1^{-1/\nu_{01}}\right)^{\nu_{01}} +
        \left(x_1^{-1/\nu_{01}}+
          x_2^{-1/\nu_{01}}\right)^{\nu_{01}}\right\}  +      \label{eq:alog} \\
      &&+\, \theta_{01} \left(x_0^{-1/\nu_{02}}+x_2^{-1/\nu_{02}}
      \right)^{\nu_{02}} + \theta_{012} \left(x_0^{-1/\nu_{012}}+
        x_1^{-1/\nu_{012}}+x_2^{-1/\nu_{012}}\right)^{1/\nu_{012} },\nonumber
    \end{IEEEeqnarray}
     where $\nu_A \in (0,1)$ for any
    $A\in 2^{\{0,1,2\}}\sm \emptyset$, and
    \begin{IEEEeqnarray*}{l}
      \theta_0 + \theta_{01} + \theta_{02} + \theta_{012}=1, \quad
      \theta_1 + 2\theta_{01} + \theta_{012}=1, \quad \theta_2 +
      \theta_{01} + \theta_{02} + \theta_{012}=1
    \end{IEEEeqnarray*}
    with
    $\theta_0, \theta_1, \theta_2, \theta_{01}, \theta_{02},
    \theta_{012} >0.$

    The initial distribution of the Markov process is
    $F_{01}(x_0,
    x_1)=F_{012}(x_0,x_1,\infty)=\exp\{-V(y_0,y_1,\infty)\}$, with
    $(y_0, y_1)$ defined in expression \eqref{eq:MEV}.\ It can be seen
    that the transition probability kernel
    $\pi(x_0, x_1)= -y_0^2 \,V_0(y_0,y_1,\infty)
    \exp\left(y_0^{-1}-V(y_0,y_1,\infty)\right)$ associated with the
    conditional distribution of $X_1 \mid X_0$,
    converges with two distinct normalizations, that is,
    $\pi(v, dx)\wk K_0(dx)$ and $\pi(v, v+dx)\wk K_1(dx)$
    as $v\to\infty$, to the distributions
    $K_0=(\theta_0+\theta_{02}) \, F_E +
    (\theta_{01}+\theta_{012})\,\delta_{+\infty},$ and
    $K_1 = (\theta_0+\theta_{02})\, \delta_{-\infty} + \theta_{01}
    \,G_{01} + \theta_{012} \, G_{012}$, respectively, where
    $F_E(x)=(1-\exp(-x))_+$,
    $G_{A}(x)=\left(1+\exp(-x/\nu_A)\right)^{\nu_A-1}$ and $\delta_x$
    is a point mass at $x \in [-\infty, \infty]$ \citep[cf. Example 5
    in][]{papaetal17}. Distributions $K_0$ and $K_1$ have entire mass
    on $(0,\infty]$ and $[-\infty,\infty)$ respectively.\ In the first
    and second normalizations, mass of size
    $(1-\theta_{01}-\theta_{012})$ escapes to $+\infty$ and mass of
    size $(\theta_0+\theta_{02})$ escapes to $-\infty$,
    respectively. As explained by \cite{papaetal17}, the reason for
    this behaviour is that the separate normalizations are related to
    two different modes of the conditional distribution of
    $X_{1}\mid X_0$.\ This phenomenon also manifests in the
    conditional distribution of $X_2\mid \{X_0, X_1\}$, which is given
    by
  \begin{IEEEeqnarray*}{rCl}
    \pi(\bm x_{0:1},\, x_2) &=& \frac{(V_0 V_1 -
      V_{01})(\bm y_{0:2})}{(V_0 V_1-V_{01}
      )(\bm y_{0:1},\infty)}
    \,\exp(V(\bm y_{0:1},\infty)-V(\bm y_{0:2})),
  \end{IEEEeqnarray*}
  where $g(f_1,f_2,f_3)(x) := g(f_1(x),f_2(x), f_3(x))$ for maps $g$
  and $f_i$, $i=1,2,3$.\ Here the problem is more complex, with this
  transition probability kernel converging with $2\,(2^k-1)=6$
  distinct normalizations.\ Letting
  \begin{IEEEeqnarray}{rCl}
    a_{11,1}(v_1,v_2) &=&
    -\nu_{012}\log\{\exp(-v_1/\nu_{012})+\exp(-v_2/\nu_{012})\}\nonumber\\
    a_{10,1}(v_1,v_2) &=&v_1, a_{01,1}(v_1,v_2) =v_2 \label{eq:functionals}\\
    a_{11,0}(v_1,v_2) &=&a_{01,0}(v_1,v_2) = a_{10,0}(v_1,v_2) = 0\nonumber,
  \end{IEEEeqnarray}
  it
  can be shown that for $(x_0,x_1)\in \RR^2$ and as $v\to \infty$,
  \begin{IEEEeqnarray*}{rl}
    \pi((v+x_0, v+x_1), a_{11,1}(v+x_0, v+x_1) + dy)\wk
    K_{\{1,1\},\{1\}}(dy;x_0,x_1)& \quad \text{on $[-\infty,\infty)$}\\
    \pi((v+x_0, v+x_1),  a_{11,0}(v+x_0, v+x_1)+dy)\wk K_{\{1,1\},\{0\}}(dy;x_0,x_1)& \quad \text{on $(0,\infty]$}\\
    \pi((v+x_0, x_1), a_{10,1}(v+x_0, x_1)+dy)\wk K_{\{1,0\},\{1\}}(dy;x_0,x_1)& \quad \text{on $[-\infty,\infty)$},\\
    \pi((v+x_0, x_1),  a_{10,0}(v+x_0, x_1) + dy)\wk K_{\{1,0\},\{0\}}(dy;x_0,x_1)& \quad \text{on $(0,\infty]$}\\
    \pi((x_0, v+x_1), a_{01,1}(x_0,v+x_1)+dy)\wk K_{\{0,0\},\{1\}}(dy;x_0,x_1)& \quad \text{on $[-\infty,\infty)$}\\
    \pi((x_0, v+x_1), a_{01,0}(x_0,v+x_1)+dy)\wk K_{\{0,1\},\{0\}}(dy;x_0,x_1) & \quad \text{on $(0,\infty]$}
  \end{IEEEeqnarray*}  
  \sloppy where the limiting measures are given by
  \begin{IEEEeqnarray*}{rCl}
    &&K_{A, \{1\}}=m_{A} \,\delta_{-\infty}+(1-m_{A})\,G_{A,\{1\}}
    \text{ and } K_{A, \{0\}}=m_{A}
    \,G_{A,\{0\}}+(1-m_{A})\,\delta_{\infty}, \quad A\in
    \{0,1\}^2\sm \{0,0\}
\end{IEEEeqnarray*}
with
\begin{equation*}
  m_{A}(x_0,x_1) = \begin{cases}\Big\{1 + (\kappa_{012}/\kappa_{01})\,
    e^{\lambda(x_0+x_1)}\,{\displaystyle
      W_{\nu_{01}-1}(e^{x_0},e^{x_1}\,;\,\nu_{01})
      /
      W_{\nu_{012}-1}(e^{x_0},e^{x_1}\,;\,\nu_{012})}\Big\}^{-1},& \text{$A=\{1,1\}$}\\
  \theta_{0}/(\theta_0+\theta_{02}), & \text{$A=\{1,0\}$}
  \\
  \theta_{1}/(\theta_1+\theta_{01}), & \text{$A=\{0,1\}$},
  \end{cases}
\end{equation*}
and
\begin{align*}
  G_{\{1,1\},\{1\}}(y\,;\,x_0,x_1) &=W_2\{1, \exp(y)\,;\,\nu_{012}\}\\
  G_{\{1,1\},\{0\}}(y\,;\,x_0,x_1) &=F_E(y)\\
  G_{\{1,0\},\{1\}}(y\,;\,x_0,x_1) &= W_1\{1,\exp(y)\,;\,\nu_{02}\}\\
  G_{\{1,0\},\{0\}}(y\,;\,x_0,x_1) &= \left[\theta_1 +
                           \theta_{01} \left\{1 + W_1(1, T(y)/T(x_1);\nu_{01})\right\} +
                           \theta_{012} W_1(1, T(y)/T(x_1)\,;\,\nu_{012})\right] g_{10}(x, y)\\
  G_{\{0,1\},\{1\}}(y\,;\,x_0,x_1) &=W_1\{1, \exp(y)\,;\,\nu_{01}\}\\
  G_{\{0,1\},\{0\}}(y\,;\,x_0,x_1) &= \left[\theta_0 +
                           \theta_{01} +\theta_{02} W_1(1, T(y)/T(x_0)\,;\,\nu_{02}) +\theta_{012}
                           W_1(1, T(y)/T(x_0)\,;\,\nu_{012})\right] g_{01}(x, y),
\end{align*}
and the function $T$ is defined in expression~\eqref{eq:MEV},
$\kappa_A = \theta_{A}\,(\nu_{A}-1)/\nu_{A}$,
$W_p(x, y\,;\,\nu)=(x^{-1/\nu} + y^{-1/\nu})^{\nu-p}$, with $x,y>0$,
$p \in \RR$, $\nu \in(0,1)$, and
\begin{IEEEeqnarray*}{l}
  \log g_{10}(x,y)=V(\infty, T(x), \infty) - V(\infty, T(x),
  T(y)),\quad \text{and} \quad
  \log g_{01}(x,y)=V(T(x), \infty, \infty) - V(T(x), \infty,
  T(y)).
\end{IEEEeqnarray*}
To help explain the necessity for requiring the normalizing
functionals \eqref{eq:functionals} to describe the evolution of an
extreme episode after witnessing an extreme event in this 2nd order
Markov process, it is useful to consider the behaviour of the spectral
measure $H$, defined in equation \eqref{eq:exp_measure}, for the
initial distribution $F_{012}$ of this process. Here, the spectral
measure $H$ places mass of size $\lvert A\lvert\,\theta_A$ on each
subface $A\in\mathscr{P}([2])$ of $\SI_2$ \citep{coletawn91} which
implies that different subsets of the variables
$(X_{t-2},X_{t-1},X_t)$ can take their largest values simultaneously,
see for example \cite{simpetal20}. Hence, if the Markov process is in
an extreme episode at time $t-1$, $t\geq 3$, then it follows that
there are four possibilities for the states $(X_{t-2}, X_{t-1})$,
that is, either the variables $X_{t-2}$ and $ X_{t-1}$ are simultaneously
extreme or just one of them is. Consequently, there are two
possibilities for the state of the process at time $t$, that is, the
variable $X_t$ can be either extreme or not, and this is demonstrated
by bimodality in the transition probability kernel under all four
distinct possibilities for the states $X_{t-2}$ and $X_{t-1}$. In
total, this gives rise to 6 distinct possibilities which necessitate
an ``event specific'' normalizing functional to guarantee the weak
convergence of the transition probability kernel. This justifies the
labelling of the functionals in expression~\eqref{eq:functionals}
where the label $(A,b)$ appearing in the subscript, with
$A\in \{0,1\}^2$ and $b\in \{0,1\}$, indicates transitioning from one
of four possible configurations $(A)$ at times $t-2$ and $t-1$ into
two possible configurations $(b)$ at time $t$---with 1 indicating that
the state is extreme and 0 otherwise. The case where the Markov
process is in an extreme episode at time $t-1$, for $t=2$, is handled
similarly noting now that $X_0$ is, by virtue of the conditioning,
already extreme and hence there are two possibilities for $X_0$ and
$X_1$, that is, either $X_1$ is extreme or not.
\begin{figure}[htpb!]
  \centering
  \includegraphics[scale=1]{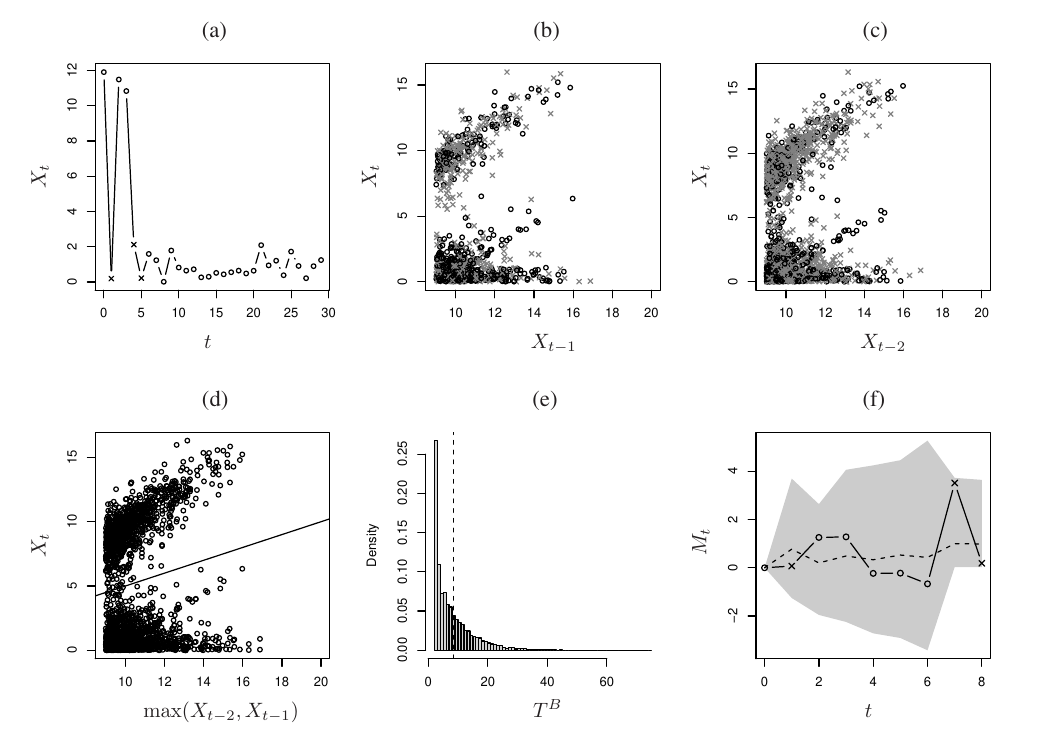}
  \caption{(a): time series plot showing a single realization from the
    2nd order Markov chain with asymmetric logistic dependence
    \eqref{eq:alog} initialized from the distribution of
    $X_0\mid X_0 > 9$.\ For this realization, there are three
    change-points $T_1^X, T_2^X$ and $T_3^X$ and are highlighted with
    a cross.\ (b): Scatterplot of states
    $\{(X_{t-1},X_{t})\,:\, X_{t-1} > 9\}$ drawn from $10^3$
    realisations of the Markov chain initialized from the distribution
    of $X_0\mid X_0>9$.\ Points for which $X_{t-2}<9$ and
    $X_{t-2}\geq 9$ are highlighed with grey crosses and black
    circles, respectively.\ (c): Scatterplot of consecutive states
    $(X_{t-2},X_{t})$.\ Points for which $X_{t-1}<9$ and
    $X_{t-1}\geq 9$ are highlighed with grey crosses and black
    circles, respectively.\ (d): Scatterplot of states
    $\{(\max(X_{t-2},X_{t-1}), X_t)\,:\, \max(X_{t-2}, X_{t-1}) > 9\}$
    and line $X_t = c\,\max(X_{t-2},X_{t-1})$ with $c=\frac{1}{2}$
    superposed.\ (e): Histogram of termination time $T^B$ obtained
    from $10^4$ realizations from the hidden tail chain.\ The Monte
    Carlo estimate of the mean of the distribution is $8.42$ and shown
    with a dashed vertical line.\ (f): pointwise 2.5\% and 97\%
    quantiles of the sampling distribution (shaded region), mean of
    the sampling distribution (dashed line) and one realization from
    the hidden tail chain (solid line), conditioned on $T^B=8$.\ The
    value of the latent Bernoulli process $B_t$ is highlighted with a
    cross when $B_t=0$ and with a circle when $B_t=1$. For all plots
    presented,
    $\theta_{0}=\theta_{1}=\theta_{2}=\theta_{01}=\theta_{02} = 0.3$,
    $\theta_{012}=0.1$, and $\nu_{01}=\nu_{02}=\nu_{012}=0.5$.\ }
\label{fig:alog}  
\end{figure}
Although complex, these modes can be identified by any line determined
by the loci of points
$(\max(x_{t-2},x_{t-1}), \zeta \, \max(x_{t-2}, x_{t-1}))$, where
$x_{t-2},x_{t-1} \in \RR$, for some $\zeta\in(0,1)$, in the distribution
of $X_{t}\mid \max\{X_{t-2},X_{t-1}\}>v$, see panels $(b)$, $(c)$ and
$(d)$ Figure~\ref{fig:alog}, where $v$ is taken equal to 9.\ This
facilitates accounting for the identification of the normalizing
functionals by introducing the stopping times, $T_0^X = 0$ a.s., and
\begin{IEEEeqnarray*}{rCl}
  T^X_j&=&\inf \{t \in (T_{j-1}^X , T^X]\,:\, X_t \leq \zeta \,
  \max(X_{t-2}, X_{t-1})\}, \qquad j\geq 1,
\end{IEEEeqnarray*}
where
\begin{IEEEeqnarray*}{rCl}
  T^X&=&\inf\{t\geq 2\,:\,X_{t-1}\leq  
  \zeta\,\max (X_{t-3},X_{t-2}), X_t\leq 
  \zeta\,\max (X_{t-2},X_{t-1})\},
\end{IEEEeqnarray*}
subject to the convention $X_{-s}=0$ for $s\in \NN\sm\{0\}$,
that is $T_j^X$, with $j\geq 1$, is the $j$th time that $\zeta$ multiplied by
the maximum of the previous two states is not exceeded after time 0,
and the termination time $T^X$ is the first time after time $0$ where
two consecutive states did not exceed $\zeta$ times the maximum of their
respective two previous states.\ Define
\[
  a_1(v) =
  \begin{cases}
    v & T^X_1 > 1 \\
    0 & T^X_1 =1
  \end{cases}
  \qquad \text{and} \qquad b_t(v) = 1 \qquad \text{for all $t\geq 1$}.
\]
Then for $t \in (T_{j-1}^X , T_j^X ]$, letting 
\[
  a_t(v_1,v_2) = \begin{cases}
    a_{11,1}(v_1,v_2)& \text{if $t\neq T_j^X$, $t-1\neq T_{j-1}^X$, $t-2\neq T_{j-1}^X$} \\
    a_{11,0}(v_1,v_2) & \text{if $t=T_j^X$, $t-1\neq T_{j-1}^X$, $t-2\neq T_{j-1}^X$}\\
    a_{10,1}(v_1,v_2)& \text{if $t\neq T_j^X$, $t-1= T_{j-1}^X$, $t-2\neq T_{j-1}^X$}\\
    a_{10,0}(v_1,v_2)& \text{if $t= T_j^X$, $t-1= T_{j-1}^X$, $t-2\neq T_{j-1}^X$}\\
    a_{01,1}(v_1,v_2)& \text{if $t\neq T_j^X$, $t-1\neq T_{j-1}^X$, $t-2= T_{j-1}^X$}\\
    a_{01,0}(v_1,v_2)& \text{if $t= T_j^X$, $t-1\neq T_{j-1}^X$, $t-2= T_{j-1}^X$},
  \end{cases}
\]
yields the hidden tail chain of this process. 
Specifically, let $\{B_t\,:\,t=0,1,\ldots\}$ be a sequence of latent
Bernoulli random variables.\ Define the hitting times
$T_j^B=\inf\{T_{j-1}^B < t \leq T^B\,:\,B_t=0\}$ with $T_0^B=0$
a.s. and $T^B=\inf\{t \geq 2 \,:\,B_{t-1}=0,B_t=0\}$. Then the hidden
tail chain process $\{\Res_t\}$ together with the latent Bernoulli
process $\{B_t\}$ form a second-order Markov process with initial 
distribution $(B_0, \Res_0)=(1,0)$ a.s.,
$B_1 \sim \text{Bern}(\theta_{01}+\theta_{02})$, and
\[
  \PR(\Res_1 \leq y\mid \Res_0, \bm B_{0:1}) =
  \begin{cases}
    \theta_{01} \,G_{01}(y) + \theta_{012} \,G_{012}(y)& B_1 = 1\\
   F_E(y) & B_1=0.
  \end{cases}
\]
The transition mechanism is given by
\[
  B_t\mid \bm B_{t-2\,:\,t-1}, \bm \Res_{t-2\,:\,t-1}, \{t \leq T^B \} \sim
  \text{Bern}(m_{\{\bm B_{t-2\,:\,t-1}\}}(\bm \Res_{t-2\,:\,t-1})),
\]
and
\begin{IEEEeqnarray*}{rCl}
  \PR(\Res_t \leq z \mid \bm B_{t-2\,:\,t}, \bm \Res_{t-2\,:\,t-1}, \{ t
  \leq T^B \})&=& G_{\{\bm B_{t-2\,:\,t-1}\},\{B_t\}}(z - a_{\bm
    B_{t-2\,:\,t-1},B_t}(\bm \Res_{t-2\,:\,t-1})).
\end{IEEEeqnarray*}
Panel $(a)$ in Figure~\ref{fig:alog} illustrates a realization from a
special case of this 2nd order Markov process. This realized path
shows that after witnessing an extreme event at time $t=0$ the process
transitions to the body of the process at time $t=1$ and then, has two
extreme states at $t=2$ and $3$ and two non-extreme states at $t=4$
and $5$. After two non-extreme values the process has permanently
transitioned to its equilibrium, that is, for $t=6,\ldots$ in this
realization. The sampling distribution of the average termination time
$T_B$ of the hidden tail chain is presented in panel $(e)$ whereas the
behaviour of hidden tail chain conditioned on it terminating after 8
steps, that is, $T_B=8$, is shown in panel $(f)$. This shows that
whilst at an extreme state, the average value of $\Res_t$ is stable
through time. 
\end{example}
\appendix
\renewcommand{\theequation}{A.\arabic{equation}}
\setcounter{equation}{0}
\section{Proofs}
\label{sec:proofs}
\subsection{Preparatory results for Theorems 1 and 2}
The proofs of Theorems~\ref{thm:tailchain}
and~\ref{thm:tailchain:nonneg} are based on Lemmas~\ref{lemma:1} and
\ref{lemma:2} below whose proofs are similar to Lemmas 4 and 5 in
\cite{papaetal17} and are omitted for brevity.
\begin{lemma}\normalfont
  \label{lemma:1}
  Let $\{X_t\,:\,t=0,1,\hdots\}$ be a homogeneous $k $-th order Markov
  chain satisfying Assumption $\AB$.\ Then, for any
  $g \in C_b(\RR)$ and for each time step $t=k,k+1,\hdots,$ as
  $v\to \infty$
  \begin{dmath*}
    \int_{\RR} g(y) \pi\left( \bm A_{t}(v,\bm \res), a_{t}(v)+b_{t}(v)
      \,dx \right) \to \int_{\RR} g(\psi_{t}^a(\bm \res)+
    \psi_{t}^b(\bm \res)\,x)\,
    K(dx),
  \end{dmath*}
  and the convergence holds uniformly on compact sets in the variable
  $\bm \res \in \RR^k$.
\end{lemma}

\begin{lemma}\normalfont
  \label{lemma:2} Let $\{X_t\,:\,t=0,1,\hdots\}$ be a homogeneous
  $k $-th order Markov chain satisfying Assumption $\BB$.\ Then, for
  any $g \in C_b([0,\infty))$ and for each time step $t=k,k+1,\hdots$,
  as $v\to \infty$
\[
  \int_{[0,\infty)} g(y) \pi\left(\bm B_{t}(v, \bm x), b_{t+1}(v) \,dy
  \right) \to \int_{[0,\infty)} g(\psi_t^b(\bm x)\,y)\, K(dy),
\]
and the convergence holds uniformly on compact sets in the variable
$\bm x \in [\delta_1,\infty) \times \cdots \times [\delta_k, \infty)$
for any $(\delta_1,\ldots, \delta_{k}) \in (0,\infty)^k$.
\end{lemma}

\begin{lemma}\label{lemma:unif_conv}\normalfont
  [Slight variant of \cite{kulisoul}] Let $(E,d)$ be a
  complete locally compact separable metric space and $\mu_n$ be a
  sequence of probability measures which converges weakly to a
  probability measure $\mu$ on $E$ as
  $n\to \infty$.
  \begin{itemize}
    \item[$(i)$] Let $\varphi_n$ be a uniformly bounded
sequence of measurable functions which converges uniformly on compact
sets of $E$ to a continuous function $\varphi$. Then $\varphi$ is
bounded on $E$ and $\lim_{n\to \infty}
\mu_n(\varphi_n)\to \mu(\varphi)$.
    \item[$(ii)$] Let $F$ be a topological space. If $\varphi\in
C_b(F\times E)$, then the sequence of functions $F\ni x \mapsto \int_E
\varphi(x,y)\mu_n(dy)\in\RR$ converges uniformly on compact sets of
$F$ to the (necessarily continuous) function $F\ni x\mapsto\int_E
\varphi(x,y)\mu(dy)\in \RR$. 
  \end{itemize}
\end{lemma}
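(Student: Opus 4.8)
The plan is to prove the two parts in sequence, using part~$(i)$ as the engine for part~$(ii)$. Throughout, the decisive ingredient is that weak convergence of probability measures on $(E,d)$---which is complete, separable and locally compact---forces uniform tightness of the sequence $\{\mu_n\}$ through Prokhorov's theorem, and it is this tightness that lets me trade the merely uniform-on-compacts convergence of the integrands for genuine convergence of the integrals against the $\mu_n$.

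For part~$(i)$ I would first dispose of boundedness of $\varphi$: if $\lvert \varphi_n \rvert \le C$ for all $n$ and $\varphi_n \to \varphi$ pointwise (a fortiori on compact sets), then $\lvert \varphi \rvert \le C$ everywhere, so $\varphi \in C_b(E)$. I then split
\[
  \mu_n(\varphi_n) - \mu(\varphi) = \mu_n(\varphi_n - \varphi) + \bigl(\mu_n(\varphi) - \mu(\varphi)\bigr).
\]
The second bracket tends to $0$ directly from $\mu_n \wk \mu$, since $\varphi \in C_b(E)$. For the first bracket, fix $\eps > 0$; by tightness there is a compact $K \subset E$ with $\sup_n \mu_n(E \setminus K) < \eps$, while on $K$ the uniform convergence gives $\sup_{y \in K} \lvert \varphi_n(y) - \varphi(y) \rvert \to 0$. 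Bounding $\lvert \mu_n(\varphi_n - \varphi) \rvert$ above by $\sup_{K}\lvert \varphi_n - \varphi \rvert + 2C\,\mu_n(E\setminus K)$ and letting $n \to \infty$ and then $\eps \downarrow 0$ yields $\mu_n(\varphi_n - \varphi) \to 0$, which completes part~$(i)$.

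For part~$(ii)$, write $g_n(x) = \int_E \varphi(x,y)\,\mu_n(dy)$ and $g(x) = \int_E \varphi(x,y)\,\mu(dy)$; continuity of $g$ follows from dominated convergence, as $\varphi$ is jointly continuous and bounded. I would establish uniform convergence on a fixed compact $L \subset F$ by contradiction: were it to fail, there would exist $\eps > 0$ and points $x_n \in L$ with $\lvert g_n(x_n) - g(x_n) \rvert \ge \eps$, and by compactness of $L$ I may pass to a subsequence with $x_n \to x_* \in L$. The key auxiliary step is the claim that joint continuity of $\varphi$ upgrades $x_n \to x_*$ to $\varphi(x_n,\cdot) \to \varphi(x_*,\cdot)$ uniformly on compact subsets of $E$; this follows from a finite-subcover argument applied to the cover of a compact $K \subset E$ by neighbourhoods arising from continuity of $\varphi$ at each $(x_*,y)$. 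Granting the claim, the functions $\varphi_n := \varphi(x_n,\cdot)$ are uniformly bounded and converge uniformly on compacts to the bounded continuous function $\varphi(x_*,\cdot)$, so part~$(i)$ gives $g_n(x_n) = \mu_n(\varphi_n) \to \mu\bigl(\varphi(x_*,\cdot)\bigr) = g(x_*)$, whereas $g(x_n) \to g(x_*)$ by continuity of $g$. Hence $\lvert g_n(x_n) - g(x_n) \rvert \to 0$, contradicting the choice of the $x_n$.

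I expect the main obstacle to be the tightness step in part~$(i)$: one must justify that Prokhorov's theorem applies, which is precisely why the hypotheses insist that $(E,d)$ be complete and separable, so that a weakly convergent sequence of probability measures (together with its limit) is relatively compact and therefore uniformly tight. The joint-continuity-to-uniform-convergence claim in part~$(ii)$ is routine but must be handled with some care, because $F$ is only assumed to be a topological space rather than a metric one, so the finite-subcover argument should be phrased in terms of open neighbourhoods of $x_*$ rather than metric balls.
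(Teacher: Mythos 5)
The paper does not actually prove this lemma; it is imported by citation as a slight variant of a result in \cite{kulisoul}, so there is no internal proof to compare your argument against. Your treatment of part $(i)$ is the standard one and is correct: completeness and separability of $(E,d)$ make the convergent sequence $\{\mu_n\}\cup\{\mu\}$ relatively compact and hence uniformly tight by Prokhorov's theorem, and the decomposition $\mu_n(\varphi_n)-\mu(\varphi)=\mu_n(\varphi_n-\varphi)+\bigl(\mu_n(\varphi)-\mu(\varphi)\bigr)$ together with the bound $\lvert\mu_n(\varphi_n-\varphi)\rvert\le\sup_{K}\lvert\varphi_n-\varphi\rvert+2C\,\mu_n(E\setminus K)$ finishes it; local compactness of $E$ is not even needed for this part.

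Part $(ii)$ contains one step that does not survive the stated generality. You fix a compact $L\subset F$, suppose uniform convergence fails, choose $x_n\in L$ with $\lvert g_n(x_n)-g(x_n)\rvert\ge\eps$, and then ``by compactness of $L$ pass to a subsequence with $x_n\to x_*$''. Since $F$ is only assumed to be a topological space, compactness of $L$ does not imply sequential compactness, so this extraction is not justified; a cluster point and a convergent subnet always exist, but a subnet is no longer indexed by a subsequence of $\NN$ and cannot be fed back into the hypothesis $\mu_n\wk\mu$ without further care. The repair is to argue directly rather than by contradiction: choose compact $K\subset E$ with $\sup_n\mu_n(E\setminus K)<\eps$ and $\mu(E\setminus K)<\eps$; your tube-lemma claim shows that the map $L\ni x\mapsto\varphi(x,\cdot)\vert_K\in C(K)$ is continuous, so its image is compact, hence totally bounded, in the supremum norm; pick $x_1,\dots,x_m\in L$ such that every $x\in L$ satisfies $\sup_{y\in K}\lvert\varphi(x,y)-\varphi(x_i,y)\rvert<\delta$ for some $i$, and bound $\lvert g_n(x)-g(x)\rvert$ by $\lvert g_n(x)-g_n(x_i)\rvert+\lvert g_n(x_i)-g(x_i)\rvert+\lvert g(x_i)-g(x)\rvert\le 2\delta+4C\eps+\max_{i}\lvert g_n(x_i)-g(x_i)\rvert$, the last term vanishing as $n\to\infty$ because there are only finitely many $x_i$. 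For every application in this paper $F$ is a subset of a Euclidean space, where your subsequence argument is perfectly valid, so the gap is one of generality rather than of substance; but as the lemma is stated, the direct covering argument is the one that closes it.
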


\subsection{Proofs of Theorems 1 and 2}
\label{sec:proof1}
\noindent \textit{Preliminaries}. Let $a_0(v)\equiv v$ and $b_0(v)\equiv 1$ and define
\begin{IEEEeqnarray}{rCl} && v_u(\res_0) = u + \sigma(u)\res_0, A_t(v,x) =
  a_t(v)+b_t(v)x, \text{ and }\nonumber \\
  && \bm A_{\tk}(v,\bm x_{\tk}) = (A_{t-k}(v,x_{t-k})
  ,\hdots, A_{t-1}(v,x_{t-1})).
  \label{eq:bmA}
\end{IEEEeqnarray}
We note that in our notation, when $k=1$, the initial distribution of
the rescaled conditioned Markov chain is
\begin{equation} \frac{F_0( v_u(d\res_0))}{\overline{F}_0(u)} =
\PR\left(\frac{X_0-u}{\sigma(u)} \in d\res_0 \mid X_0 > u\right).
\label{eq:init}
\end{equation} whereas when $k>1$, it equals to the product of the right hand
side of equation \eqref{eq:init} with 
\[ \pi_0( v_u(\res_0),
\bm A_{\kkminone}(v_u(\res_0),d\bm \res_{\kkminone})) :=
\PR\left(\bigcap_{j=1}^{k-1}\left\{\frac{X_j-a_j(X_0)}{b_j(X_0)} \in d
\res_j\right\}~\Bigg | ~\frac{X_0 - u}{\sigma(u)} = \res_0\right).
\] For $j \in \{k,\dots,t\}$ with $t\geq k \geq 1$ the transition
kernels of the rescaled Markov chain can be written as
\[
  \pi(\bm A_{\jk}(v_u(\res_0),\bm \res_{j-1,k}),A_j(v_u(\res_0),d\res_j))=\PR\left(\frac{X_j
      - a_j(X_0)}{b_j(X_0)} \in d \res_j~\Bigg | ~
    \left\{\frac{X_{j-i}-a_{j-i}(X_0)}{b_{j-i}(X_0)} =
      \res_{j-i}\right\}_{i=1,\dots, k}\right).
\]
\begin{proof}[Proof of Theorem~\ref{thm:tailchain}] Consider, for
$t\geq k\geq 1$, the measures
\begin{IEEEeqnarray*}{rCl} \mu^{(u)}_t(d\res_0,\dots,d\res_t) &=&
  \prod_{j=k}^t
  \pi(\bm A_{\jk}(v_u(\res_0),\bm \res_{\jk}),A_j(v_u(\res_0),d\res_j))\\
  &&\times \left[\pi_0( v_u(\res_0), \bm
    A_{\kkminone}(v_u(\res_0),d\bm \res_{\kkminone}))\right]^{1(k>1)}
  \frac{F_0( v_u(d\res_0))}{\overline{F}_0(u)}
  \end{IEEEeqnarray*} and
  \begin{IEEEeqnarray*}{rCl} \mu_t(d\res_0,\dots,d\res_t) &=& \prod_{j=k}^t
K\left(\frac{d\res_j -
\psi_{j}^a(\bm \res_{\jk})}{\psi_j^b(\bm \res_{\jk})}\right)
[G(d\res_1\times\cdots\times
d\res_{k-1})]^{1(k>1)}\,H_0(d\res_0),
\end{IEEEeqnarray*} on $[0,\infty)\times \RR^t$, where $1(k>1)$ denotes
the indicator function of $\{k>1\}$.\ For $f\in C_b([0,\infty)\times \RR^t)$,
we may write
  \begin{equation*}
\mathbb{E}\left[f\left(\frac{X_0-u}{\sigma(u)},\frac{X_1-a_1(X_0)}{b_1(X_0)},\hdots,\frac{X_t-a_t(X_0)}{b_t(X_0)}\right)~\Bigg|~
X_0>u\right]= \int_{[0,\infty)\times \RR^t} f(\bm \res_{\tplusonetplusone})
\mu_t^{(u)} (d\res_0,\hdots,d\res_t)
  \end{equation*}  
and
\[ \mathbb{E}\left[f\left(E_0,\Res_1,\cdots,\Res_t\right)\right] =
\int_{[0,\infty)\times \RR^t} f( \bm \res_{\tplusonetplusone}) \mu_t
(d\res_0,\hdots,d\res_t).
\]
We need to show that $\mu_t^{(u)}$ converges weakly to $\mu_t$.\ Let
$g_0\in C_b([0,\infty))$ and $g\in C_b(\RR^{k})$.\ The proof is by
induction on $t$. For $t = k$ it suffices to show that
\begin{IEEEeqnarray}{rCl} && \int_{[0,\infty)\times \RR^{k}} g_0(\res_0)
g(\res_1,\hdots,\res_{k})
\mu_k^{(u)}(d\res_0,d\res_1,\hdots,d\res_{k})\nonumber\\\nonumber\\ &&=
\int_{[0,\infty) } g_0(\res_0) \left[\int_{\RR^{k}} g(\bm \res_{\kplusonek})
\pi((v_u(\res_0),\bm A_{\kkminone}(v_u(\res_0),\bm \res_{\kkminone})),A_{k}(v_u(\res_0),d\res_k))
\right.\nonumber\\\nonumber\\ && \qquad \qquad
\qquad\qquad\qquad\qquad \pi_0( v_u(\res_0),
\bm A_{\kkminone}(v_u(\res_0),d\bm \res_{\kkminone}))\bigg]
\frac{F_0( v_u(d\res_0))}{\overline{F}_0(u)},
\label{eq:induction_first}
\end{IEEEeqnarray} converges to
$\mathbb{E}(g_0(E_0))\,\mathbb{E}(g(\Res_1,\hdots,\Res_k))$.

By Assumptions $\AA$ and $\AB$, the integrand in the term in square
brackets in \eqref{eq:induction_first} converges pointwise to a limit
and is dominated by
$\sup\{g(\bm \res)\,:\, \bm \res\in \RR^k\}\times \pi((v_u(\res_0),\bm
A_{\kkminone}(v_u(\res_0),\bm
\res_{\kkminone})),A_{k}(v_u(\res_0),d\res_k))$.\ Lebesgue's dominated
convergence theorem yields that the term in square brackets
of~\eqref{eq:induction_first} is bounded and converges to
$\mathbb{E}\left[g(\bm \Res_{\kplusonek})\right]$ for $u\to\infty$ since
$v_u(\res_0)\to \infty$ as $u\to \infty$.\ The convergence holds
uniformly in the variable $\res_0\in[0,\infty)$ since $\sigma(u)>0$.\
Therefore Lemma~\ref{lemma:unif_conv} applies, which guarantees
convergence of the entire term \eqref{eq:induction_first} to
$\mathbb{E}(E_0)\,\mathbb{E}(g(\bm \Res_{\kplusonek}))$ due to Assumption
$\AAA$.

Next, assume that the statement is true for some $t > k$. It suffices
to show that for any $g_0\in C_b([0,\infty)\times \RR^t$,
$g\in C_b(\RR)$,
\begin{IEEEeqnarray}{rCl} && \int_{[0,\infty)\times \RR^{t+1}} g_0(\bm
\res_{\tplusonetplusone}) g(\res_{t+1})
\mu_{t+1}^{(u)}(d\res_0,d\res_1,\hdots,d\res_{t+1})\nonumber\\\nonumber\\ &&=
\int_{[0,\infty)\times\RR^t } g_0(\bm \res_{\tplusonetplusone}) \left[\int_{\RR}
g(\res_{t+1})\pi(\bm A_{\tplusonek}(v_u(\res_0),\bm \res_{\tplusonek}),A_t(v_u(\res_0),d\res_{t+1}))
\right]\nonumber\\\nonumber\\
&&\qquad\qquad\qquad\qquad\qquad\qquad\qquad\qquad\qquad
\mu_{t}^{(u)}(d\res_0,d\res_1,\hdots,d\res_{t})
\label{eq:induction_step}
\end{IEEEeqnarray}
converges to
\begin{IEEEeqnarray}{rCl} && \int_{[0,\infty)\times \RR^{t+1}} g_0(\bm
\res_{\tplusonetplusone}) g(\res_{t+1})
\mu_{t+1}(d\res_0,d\res_1,\hdots,d\res_{t+1})\nonumber\\\nonumber\\
&&=\int_{[0,\infty)\times \RR^{t}} g_0(\bm \res_{\tplusonetplusone})
\left[\int_{\RR}g(\res_{t+1})K\left(\frac{d\res_{t+1} -
\psi^a_{t}(\bm \res_{\tplusonek})}
{\psi^b_{t}(\bm \res_{\tplusonek})}\right)\right]
\mu_{t}(d\res_0,d\res_1,\hdots,d\res_{t}).\nonumber\\\nonumber\\
\end{IEEEeqnarray} The term in square brackets
of~\eqref{eq:induction_step} is bounded, and by Lemma~\ref{lemma:1}
and Assumptions $\AA$ and $\AB$, it converges uniformly on compact
sets in both variables
$(\res_0,\bm \res_{\tplusonek})\in[0,\infty)\times \RR^k$ jointly, since
$\sigma(u)>0$.\ Hence the induction hypothesis and
Lemma~\ref{lemma:unif_conv} imply the desired result.
\end{proof}

\begin{proof}[Proof of Theorem~\ref{thm:tailchain:nonneg}] 
  Define
  \begin{equation*}
\bm b_{\tk}(v,\bm x_{\tk}) = (b_{t-k}(v)\,x_{t-k}
,\hdots, b_{t-1}(v)\,x_{t-1}).
  \end{equation*} Consider the measures 
  \begin{IEEEeqnarray}{rCl} \mu^{(u)}_t(d\res_0,\dots,d\res_t) &=&
\prod_{j=k}^t
\pi(\bm b_{\jk}(v_u(\res_0),\bm \res_{\jk}),b_j(v_u(\res_0)) \,d\res_j)\nonumber
\\ \nonumber\\
&& \left[\pi_0( v_u(\res_0),
\bm b_{\kkminone}(v_u(\res_0),d\bm \res_{\kkminone}))\right]^{1(k>1)}
\frac{F_0( v_u(d\res_0))}{\overline{F}_0(u)}
    \label{eq:firstmeasure:nonneg}
  \end{IEEEeqnarray} and
  \begin{IEEEeqnarray}{rCl} \mu_t(d\res_0,\dots,d\res_t)&=& \prod_{j=k}^t
    K\left(\frac{d\res_j}{\psi^\scale_{j}(\bm \res_{\jk})}\right)
    \left[G(d\res_1, \ldots, d\res_{k-1})\right]^{1(k>1)}
    H_0(d\res_0),
    \label{eq:secondmeasure:nonneg}
  \end{IEEEeqnarray} 
  on $[0,\infty) \times [0,\infty)^t$. We may write
  \begin{equation*} \EE \left[ f\left(\frac{X_0 -
u}{\sigma(u)},\frac{X_1}{\scale_1(X_0)},\dots,\frac{X_t}{\scale_t(X_0)}\right)
\,\bigg\vert\, X_0 > u\right] = \int_{[0,\infty) \times [0,\infty)^t}
f\left(\bm \res_{\tplusonetplusone}\right) \mu^{(u)}_t(d\res_0,\dots,d\res_t)
  \end{equation*} and
  \begin{align*} \EE \left[ f\left(E_0,\Res_1,\dots,\Res_t\right) \right] =
\int_{[0,\infty) \times [0,\infty)^t} f\left(\bm \res_{\tplusonetplusone}\right)
\mu_t(d\res_0,\dots,d\res_t)
  \end{align*} for $f \in C_b([0,\infty)\times [0,\infty)^t)$.\ Note
  that $b_j(0)$, $j=1,\dots,t$ need not be defined in
  (\ref{eq:firstmeasure:nonneg}), since $v_u(\res_0)\geq u>0$ for $\res_0\geq
  0$ and sufficiently large $u$, whereas (\ref{eq:secondmeasure:nonneg})
  is well-defined, since the measures $G$ and $K$ put no mass at any
  half-plane $C_j=\{(\bm \res_{\kkminone})\in [0,\infty)^{k-1}: \text{$\res_j =
    0$}\}\in [0,\infty)^{k-1}$ and at $0\in [0,\infty)$ respectively.\
  Formally, we may set $\psi^\scale_{j}(\bm 0)=1$,
  $j=1,\dots,t$ in order to emphasize that we consider measures on
  $[0,\infty)^{t+1}$, instead of $[0,\infty)\times (0,\infty)^t$.\ To
  prove the theorem, we need to show that $\mu^{(u)}_t(d\res_0,\dots,d\res_t)$
  converges weakly to $\mu_t(d\res_0,\dots,d\res_t)$.\ The proof is by
  induction on $t$.\ We show two statements by induction on $t$:
  \begin{description}[wide=0\parindent]
  \item[(I)] $\mu^{(u)}_t(d\res_0,\dots,d\res_t)$ converges weakly to
$\mu_t(d\res_0,\dots,d\res_t)$ as $u \uparrow \infty$.
  \item[(II)] For all $\varepsilon>0$ there exists $\delta_t>0$ such
that $\mu_t([0,\infty)\times [0,\infty)^{t-1} \times
[0,\delta_t])<\eps$.
  \end{description} We start proving the case $t=k$.  \\
  \noindent {(I) for $t=k$:} It suffices to show that for any
  $g_0 \in C_b([0,\infty))$ and $g \in
  C_b([0,\infty)^{k-1})$ 
  \begin{align}
\notag&\int_{[0,\infty)\times [0,\infty)^{k-1} \times [0,\infty)}
g_0(\res_0) g(\bm \res_{\kplusonek}) \mu^{(u)}_1(d\res_0,\ldots, d\res_k)\\
=&\int_{[0,\infty)} g_0(\res_0) \left[\int_{[0,\infty)^{k-1}}
\int_{[0,\infty)} g(\res_1,\ldots,\res_{k})
\pi((v_u(\res_0),\bm b_{\kk}(v_u(\res_0),\bm \res_{\kk})),b_{k}(v_u(\res_0))d\res_k)
\right]\nonumber \\ & \qquad \qquad \qquad \pi_0( v_u(\res_0),
\bm b_{\kkminone}(v_u(\res_0),d\bm \res_{\kkminone}))
                      \frac{F_0(v_u(d\res_0))}{\overline{F}_0(u)}
                      \label{eq:induction:start:lhs:nonneg}
  \end{align} converges to
  \[ \int_{[0,\infty)\times [0,\infty)^{k}} g_0(\res_0) g(\bm \res_{\kplusonek})
\mu_1(d\res_0,\ldots, d\res_k)=\EE(g_0(E_0))
\EE\left(g(\Res_1,\ldots,\Res_k)\right).
\] By Assumptions $\AAA$ and $\BB$, the integrand in the term in
square brackets converges pointwise to a limit and is dominated by
  \[ \sup\{g(\bm \res)\,:\, \bm \res\in \RR_+^k\}\times
\pi((v_u(\res_0),\bm b_{\kkminone}(v_u(\res_0),\bm \res_{\kkminone})),b_{k}(v_u(\res_0))\,d\res_k).
\] Lebesgue's dominated convergence theorem yields that the term in
square brackets of~\eqref{eq:induction:start:lhs:nonneg} is bounded
and converges to $\EE(g(\bm \Res_{\kplusonek}))$ for $u\uparrow \infty$,
since $v_u(\res_0) \to \infty$ for $u \uparrow \infty$.\ 
The convergence is uniform in the variable $\res_0$, since
$\sigma(u)>0$.\ Therefore, Lemma~\ref{lemma:unif_conv} (i) applies,
which guarantees convergence of the entire term
(\ref{eq:induction:start:lhs:nonneg}) to
$\EE(g_0(E_0)) \EE\left[g(\bm \Res_{\kplusonek})\right]$ due
to Assumption $\AAA$.  \\
  \noindent {(II) for $t=k$:} Since $K(\{0\})=0$, there exists
$\delta>0$ such that $K([0,\delta])<\eps$, which immediately entails
$\mu_k([0,\infty)^{k}\times[0,\delta])=H_0([0,\infty))\,
\smash{\left[G([0,\infty)^{k-1})\right]^{ 1(k>1)
}}\,K([0,\delta])<\epsilon$.\\

  \noindent Now, let us assume that both statements ((I) and (II)) are
proved for some $t \in \NN$.  \\
  
\noindent {(I) for $t+1$:} It suffices to show that for any
$g_0 \in C_b([0,\infty)\times [0,\infty)^t)$, $g\in C_b([0,\infty))$
  \begin{align}\label{eq:induction:step:nonneg} \notag
    &\int_{[0,\infty)\times [0,\infty)^{t+1}} g_0(\bm \res_{\tplusonetplusone})
      g(\res_{t+1}) \mu^{(u)}_{t+1}(d\res_0,d\res_1,\dots,d\res_t,d\res_{t+1})\\ \notag
    &=\int_{[0,\infty)\times [0,\infty)^{t}} g_0(\bm \res_{\tplusonetplusone})
      \left[\int_{[0,\infty)} g(\res_{t+1})
      \pi(\bm b_{\tplusonek}(v_u(\res_0), \bm \res_{\tplusonek}),
      b_{t+1}(v_u(\res_0))\,d\res_{t+1}) \right]\\
    &\hspace{9.5cm}\mu^{(u)}_{t}(d\res_0,d\res_1,\dots,d\res_t)
  \end{align} converges to
  \begin{IEEEeqnarray}{rCl}
    \notag&\int_{[0,\infty)\times [0,\infty)^{t+1}} g_0(\bm \res_{\tplusonetplusone})
            g(\res_{t+1}) \mu_{t+1}(d\res_0,d\res_1,\dots,d\res_t,d\res_{t+1})\\ 
          &=\int_{[0,\infty)\times [0,\infty)^{t}} g_0(\bm \res_{\tplusonetplusone})
            \left[\int_{[0,\infty)}g(\res_{t+1})
            K\left(d\res_{t+1}/\psi_\tpo^\scale(\bm \res_{\tplusonek})\right)
            \right] \mu_{t}(d\res_0,d\res_1,\dots,d\res_t).
            \label{eq:induction:step:limit:nonneg}
  \end{IEEEeqnarray}
  From Lemma~\ref{lemma:2} and Assumptions $\BA$ and $\BB$ we know
  that, for any $\delta>0$, the (bounded) term in the square brackets
  of (\ref{eq:induction:step:nonneg}) converges uniformly on compact
  sets in the variable
  $\bm \res_{\tplusonek} \in \prod_{i=1}^k[\delta_i,\infty)$ to the
  continuous function
  \[\int_{[0,\infty)} g(\psi_\tpo^\scale(\bm \res_{\tplusonek})
    \res_{t+1}) K(d\res_{t+1})\] (the term in the square brackets of
  (\ref{eq:induction:step:limit:nonneg})). This convergence holds even
  uniformly on compact sets in both variables
  $(\res_0,\bm \res_{\tplusonek}) \in [0,\infty)\times
  \prod_{i=1}^k[\delta_i,\infty)$ jointly, since $\sigma(u)>0$.
  Hence, the induction hypothesis (I) and Lemma~\ref{lemma:unif_conv}
  (i) imply that for any $\delta>0$ the integral in
  (\ref{eq:induction:step:nonneg}) converges to the integral in
  (\ref{eq:induction:step:limit:nonneg}) if the integrals with respect
  to $\mu_t$ and $\mu_t^{(u)}$ were restricted to
  $A_\delta:=[0,\infty)\times [0,\infty)^{t-1} \times [\delta,\infty)$
  (instead of integration over
  $[0,\infty)\times [0,\infty)^{t-1} \times [0,\infty)$).
                
  Since $g_0$ and $g$ are bounded, it suffices to control the mass of
$\mu_t$ and $\mu_t^{(u)}$ on the complement
$A_\delta^c=[0,\infty)\times [0,\infty)^{t-1} \times [0,\delta)$.  For
some prescribed $\eps>0$ it is possible to find some sufficiently
small $\delta>0$ and sufficiently large $u$, such that
$\mu_t(A_\delta^c)<\eps$ and $\mu^{(u)}_t(A^c_\delta)<2\eps$.  Because
of the induction hypothesis (II), we have indeed
$\mu_t(A_{\delta_t})<\eps$ for some $\delta_t>0$.  Choose
$\delta=\delta_t/2$ and note that the sets of the form $A_\delta$ are
nested. Let $C_\delta$ be a continuity set of $\mu_t$ with $A^c_\delta
\subset C_\delta \subset A^c_{2\delta}$.  Then the value of $\mu_t$ on
all three sets $A^c_\delta,C_\delta,A^c_{2\delta}$ is smaller than
$\eps$ and because of the induction hypothesis (I), the value
$\mu^{(u)}_t(C_\delta)$ converges to $\mu_t(C_\delta)<\eps$. Hence,
for sufficiently large $u$, we also have
$\mu^{(u)}_t(A^c_\delta)<\mu^{(u)}_{t}(C_\delta)<\mu_t(C_\delta)+\eps<2\eps$,
as desired.\\

\noindent \underline{(II) for $t+1$:} We have for any $\delta>0$ and
any $c>0$
\begin{align*}
  &\mu_{t+1}([0,\infty)\times[0,\infty)^t \times [0,\delta]) =
    \int_{[0,\infty)\times[0,\infty)^t}
    K\left(\left[0,\delta/\psi_\tpo^\scale(\bm \res_{\tplusonek})\right]\right)
    \mu_t(d\res_0,\dots,d\res_t).
\end{align*}
Splitting the integral according to
$\{\psi_\tpo^\scale(\bm \res_{\tplusonek})>c\}$ or
$\{\psi_\tpo^\scale(\bm \res_{\tplusonek})\leq c\}$ yields
\begin{align*}
  &\mu_{t+1}([0,\infty)\times[0,\infty)^t \times [0,\delta]) \leq
    K\left(\left[0,\delta/c\right]\right) + \mu_t( [0,\infty)\times
    [0,\infty)^{t-1}\times (\psi_\tpo^\scale)^{-1}([0,c])\}).
\end{align*}
By Assumption $\BB$ (i) and the induction hypothesis (II) we may
choose $c>0$ sufficiently small, such that the second summand
$\mu_t([0,\infty)\times [0,\infty)^{t-1}\times
(\psi_\tpo^\scale)^{-1}([0,c])\})$ is smaller than
$\eps/2$. Secondly, since $K(\{0\})=0$, it is possible to choose
$\delta_{t+1}=\delta>0$ accordingly small, such that the first
summand $K\left(\left[0,\frac{\delta}{c}\right]\right)$ is smaller
than $\eps/2$, which shows (II) for $t+1$.
\end{proof}
\subsection{Proof of Proposition \ref{lemma:doa_equivalence}}
\label{sec:proof_doa_equivalence}
\begin{proof}
  We start by proving that $(i)$ implies $(ii)$.\ Suppose there exist
  $a_t$, $b_t$ $\psi_t^a$ and $\psi_t^b$ such that $(i)$ holds.\ Then,
  for $t=k,k+1\dots$,
\begin{IEEEeqnarray*}{rCl}
  \frac{a(\bm A_t(\level, \bm \res_\level))-a(\bm A_t(\level, \bm
    0))}{b(\bm A_t(\level, \bm 0))} &=& \frac{a(\bm A_t(\level, \bm
    \res_\level))-a_t(\level)}{b_t(\level)}\frac{b_t(\level)}{b(\bm
    A_t(\level,\bm 0))} - \frac{a(\bm A_t(\level, \bm
    0))-a_t(\level)}{b_t(\level)}\frac{b_t(\level)}{b(\bm
    A_t(\level,\bm 0))}\\
  &\to& \frac{\psi_t^a(\bm \res)}{\psi_t^b(\bm 0)} -
  \frac{\psi_t^a(\bm 0)}{\psi_t^b(\bm 0)} = \frac{\psi_t^a(\bm \res) -
    \psi_t^a(\bm 0)}{\psi_t^b(\bm 0)}, \quad \text{whenever
    $\bm z_\level \to \bm z$ as $v\to\infty$},
\end{IEEEeqnarray*}
and
\begin{IEEEeqnarray*}{rCl}
  \frac{b(\bm A_t(\level, \bm \res_\level))}{b(\bm A_t(\level, \bm
    0))} &=& \frac{b(\bm A_t(\level, \bm \res_\level))}{b_t(\level)}
  \, \frac{b_t(\level)}{b(\bm A_t(\level, \bm 0))} \to
  \frac{\psi_t^b(\bm \res)}{\psi_t^b(\bm 0)}, \quad \text{whenever
    $\bm z_\level \to \bm z$ as $v\to\infty$}.
\end{IEEEeqnarray*}
Next we prove $(ii)$ implies $(i)$.\ Let
$a_t(\level) = a(\bm A_t(\level, 0)) - c\,b(\bm A_t(\level, \bm 0))$
and $b_t(\level)=d\, b(\bm A_t(\level, \bm 0))$ for arbitrary
constants $c\in \RR$, $d\in \RR_+$.\ Then, for $t=k,k+1\dots$,
\[
  \frac{a(\bm A_t(\level, \bm \res_\level))-a_t(\level)}{b_t(\level)} = \frac{b(\bm A_t(\level,\bm
    0))}{b_t(\level)} \left[\frac{a(\bm A_t(\level, \bm \res_\level))-a(\bm A_t(\level, \bm
      0))}{b(\bm A_t(\level,\bm 0))} + c\right]\to
  \frac{\lambda_t^b(\bm \res)}{d}\left[\lambda_t^a(\bm \res)+c\right],
\]
whenever $\bm \res_\level\to\bm \res\in\RR^k$ as $\level\to\infty$.\
Define $\psi_t^a(\bm \res) = (\lambda_t^a(\bm \res) + c)/d$ and
$\psi_t^b(\bm \res)=\lambda_t^b(\bm \res)/d$. By assumption,
$\lambda_t^a(\bm 0)=0$ and $\lambda_t^b(\bm 0) = 1$. Hence,
$\lambda_t^a(\bm \res) = [\psi_t^a(\bm \res) - \psi_t^a(\bm
0)]/\psi_t^b(\bm 0)$ and
$\lambda_t^b(\bm \res)=\psi_t^b(\bm \res)/\psi_t^b(\bm 0)$, which
completes the proof.\
\end{proof}

\subsection{Proof of Corollaries}
\begin{proof}[Proof of Corollary~\ref{cor:SDE_AD}]
  Since $a$ is continuous, we have that
  $a(\level\bm 1 + \bm \res_\level) - \level = a(\bm \res_\level)\to
  a(\bm \res)$, whenever $\bm \res_\level\to\bm \res\in\RR^k$.\ Hence,
  convergence \eqref{eq:psi_functions} holds true with
  $\psi_t^a(\bm \res)=a(\bm \res)$ and $\psi_t^b(\bm \res)=1$.

  For any $s\in \NN$, \cite{asensege2022} show that under the
  assumptions of Corollary~\ref{cor:SDE_AI}, the random vector
  $\bm X_{P}= \exp(\bm X_{0\,:\, s})$ is multivariate regularly
  varying, that is, for any $A\subset \SI_{s}$, 
  \[
    \PP\left(\frac{\bm X_{P}}{\lVert \bm
        X_{P}\rVert}\in A, \lVert\bm X_{P}\rVert >
      \level\, r ~\Big|~ \lVert\bm X_{P}\rVert > \level\right)
    \rightarrow H(A)\, r^{-1}, \quad r\geq 1,
  \]
  where $H$ is a Radon measure on $\SI_{s}$ satisfying $H(\SI_s)=s+1$
  and $\int_{\SI_s} w_j dH(\bm w_{0\,:\,s}) = 1$ for $j=0,\dots,s$.\
  Theorem \ref{thm:tailchain} and Proposition 4 of \cite{heffres07}
  imply that $\Res_t\sim G_t$ where
  \[
    G_t(z)=\int_{0}^{q(z)}(1-w) H_t(dw), \quad \text{with }
    q(z)=e^z/(1+e^z), \quad z\in\RR,
  \]
  where $H_t$ denotes the lag $t$ bivariate spectral measure
  associated with $H$, that is, for every $t \geq 1$, $H_t$ is a Radon
  measure on $\SI_1$ that satisfies $H_t(\SI_1)=2$ and
  $\int_{\SI_1} w dH_t(w)=1$.\ Thus, we have that the expected value
  of $\Res_t$ satisfies
  \begin{IEEEeqnarray*}{rCl}
    \mathbb{E}(\Res_t)&=&\int_\RR z \,\mathrm{d}G_t(z)= \int_\RR \int_{0}^z \,\mathrm{d}u\,
    \,\mathrm{d}G_t(z) =-\int_{-\infty}^0 G_t(z)\,\mathrm{d}{z} +
    \int_{0}^{\infty} [1 - G_t(z)]\,\mathrm{d}{z}=\\
    &=&-\int_{0}^{1/2} \int_0^u (1-w)\,\mathrm{d}
    H_t(w)\,\mathrm{d}\log\{u/(1-u)\} + \int_{1/2}^{1}
    \int_u^1 (1-w)\,\mathrm{d} H_t(w)\,\mathrm{d}\log\{u/(1-u)\} =\\
    &=&-\int_{0}^{1/2} \int_w^{1/2} [\mathrm{d}\log\{u/(1-u)\}]\,
    (1-w)\,\mathrm{d} H_t(w) + \int_{1/2}^{1}
    \int_{1/2}^w [\mathrm{d}\log\{u/(1-u)\}]\,(1-w)\,\mathrm{d} H_t(w) =\\
    &=& \int_{\SI_1} \log\{w/(1-w)\}\, (1-w)\,\mathrm{d} H_t(w) < \log
    \left[\int_{\SI_1} w\,\mathrm{d} H_t(w)\right]=0.
  \end{IEEEeqnarray*}
  The strict inequality follows from Jensen's inequality, the strict
  concavity of the $\log$ function, and due to
  Corollary~\ref{cor:SDE_AI} which requires $G_t$ to be a
  non-degenerate distribution.\ The latter ensures
  $H_t\neq 2\,\delta_{1/2}$, where $\delta_{x}$ denotes the Dirac
  measure at $\{x\}$.

\end{proof}

\begin{proof}[Proof of Corollary~\ref{cor:SDE_AI}]
  It suffices to show that for $a_t(x)=\alpha_t \,x$ and
  $b_t(x)=x^\beta$, with $\alpha_t$ given by expression
  \eqref{eq:recurrence_slope}, then convergence
  \eqref{eq:psi_functions} holds true with
  $\psi_t^a(\bm \res)=\nabla a(\bm \alpha_{\tk})\cdot \bm \res$ and
  $\psi_t^b(\bm \res)=b(\bm \alpha_{\tk})$, and that $\alpha_t\to 0$
  as $t\to\infty$.\ Since $a$ is twice continuously differentiable, we
  have
  \begin{IEEEeqnarray*}{rCl}
    & & v^{-\beta}[a(\bm \alpha_{\tk}\,v + v^\beta\,\bm \res_v)- a(\bm
    \alpha_{\tk}\,v)] =  \nabla a(\bm
    \alpha_{\tk}\,v)\cdot \bm \res_v + v^{2\beta-1} \bm \res_v^\top
    \nabla
    \nabla^\top a(\bm \alpha_{\tk}v) \bm \res_v + O(v^{3(\beta-1)})\\
    && = \nabla a(\bm \alpha_{\tk})\cdot \bm \res_v + v^{\beta-1} \bm
    \res_v^\top \nabla \nabla^\top a(\bm \alpha_{\tk}) \bm \res_v +
    o(v^{\beta-1}), \quad \text{as $v\to\infty$},
  \end{IEEEeqnarray*}
  where the last equality follows because $a$ is $1$--homogeneous,
  which gives that
  $\nabla a(\bm \alpha_{\tk} v)=\nabla a(\bm \alpha_{\tk})$ and
  $\nabla\nabla^\top a(\bm \alpha_{\tk} v)=v^{-1}\nabla\nabla^\top
  a(\bm \alpha_{\tk})$.\ Similarly, because $b$ is continuous and
  $\beta$--homogeneous with $\beta\in[0,1)$, this gives
  $v^{-\beta}b(\bm \alpha_{\tk}\,v + v^\beta\,\bm \res_v) = b(\bm
  \alpha_{\tk} + v^{\beta-1}\,\bm \res_v) \to b(\bm \alpha_{\tk})$, as
  $v\to\infty$.\ Hence, convergence \eqref{eq:psi_functions} holds
  true with
  $\psi_t^a(\bm \res)=\nabla a(\bm \alpha_{\tk})\cdot \bm \res$ and
  $\psi_t^b(\bm \res)=b(\bm \alpha_{\tk})$.


  Lastly, we show that $\alpha_t\to 0$ as $t\to \infty$.\ Suppose
  there exists a fixed point $\bm x_{1\,:\,k}^\star$ of $f$, that is,
  $\bm x_{1\,:\,k}^\star$ satisfies
  $f(\bm x_{1\,:\,k}^\star)=(\bm x_{2\,:k}^\star, a(\bm
  x_{1\,:\,k}^\star))=\bm x_{1\,:\,k}^\star$.\ Equating terms
  element-wise gives that $\bm x_{1\,:\,k}^\star = x\bm 1$ and
  $a(x \bm 1)=x$ for some $x\in\RR$. By assumption, $a$ is continuous
  and 1--homogeneous.\ This gives that $a(\bm 0)=0$ and hence, $\bm 0$
  is a fixed point, that is $f(\bm 0)=\bm 0$.\ By assumption, we also
  have $a(\bm 1)\neq 1$ and therefore, $\bm 0$ is the unique fixed
  point of $f$.\ Thus, $\bm x_{1\,:\,k}^\star$ exists, is unique, and
  satisfies $\bm x_{1\,:\,k}^\star=\bm 0$.\ The condition
  $\rho(\bm J_f(\bm 0)) < 1$ ensures $\bm x_{1\,:\,k}^\star$ is a
  stable and attracting fixed point in the Lyapunov sense.\ Because
  $f$ is a homogeneous map, a simple adaptation of Theorem 17.1 of
  \cite{hahn67} to the case of autonomous discrete dynamical systems
  gives that $\bm x_{1\,:\,k}^\star$ is globally asymptotically
  stable.\ The latter implies that $f^t(\bm x) \to \bm x^\star$ as
  $t\to\infty$ for all $\bm x\in\RR^k$, where $f^t$, $t\geq 2$,
  denotes the $t$th functional power of $f$ defined by
  $f^t = f \circ f^{t-1}$.\ Therefore
  $f^t(1,\alpha_1,\dots,\alpha_{k-1})=(\alpha_{t},\dots,\alpha_{t+k-1})\to
  \bm 0$ as $t\to \infty$, which gives $\alpha_t \to 0$ as
  $t\to \infty$.
\end{proof}

\begin{proof}[Proof of Corollary~\ref{cor:SDE_AI_scale}]
  Let
  $\log \beta_t = \log \beta +
  \log\left(\max_{i=1,\ldots,k}\beta_{t-i}\right)$ and consider
  convergence \eqref{eq:psi_functions_scale}.\ For $t=k$, we have
  \[
    \level^{-\beta}b(\level \res_0, \level^\beta \res_1, \dots, \level^{\beta} \res_{k-1} ) =
    b(\res_0, \level^{\beta-1} \res_1, \dots, \level^{\beta-1} \res_{k-1} ) \to
    b(\res_0, \bm 0_{k-1}), \quad \text{as $\level\to\infty$}.
  \]
  For $t=k+1$ we have
  \[
    \level^{-\beta^2}b(\level^\beta \res_0, \level^\beta \res_1, \dots, \level^{\beta} \res_{k-1} )
    = b(\res_0, \res_1, \dots, \res_{k-1} ).
  \]
  and for $t=k + j$ with $j\in\{2,\dots,k-1\}$, we have
  \begin{IEEEeqnarray*}{rCl}
    \level^{-\beta^2}b(\level^\beta \res_0, \dots \level^{\beta}
    \res_{k-j}, \level^{\beta^2} \res_{k-j+1}\dots, \level^{\beta^2}
    \res_{k-1} ) &=& b(\res_0, \dots \res_{k-j},
    \level^{\beta(\beta-1)} \res_{k-j+1},\dots, \level^{\beta(\beta-1)} \res_{k-1} )\\
    &\to& b(\res_0,\hdots, \res_{k - j}, \bm 0_{j-1}), \quad \text{as
      $\level\to\infty$}.
  \end{IEEEeqnarray*}
  Iterating forwards for $t=2k,2k+1,\dots$, we see that convergence
  \eqref{eq:psi_functions_scale} holds with
  \begin{IEEEeqnarray*}{rCl}
    \psi_t^b(\bm \res)&=&
    \begin{cases}
      b(\res_{0}, \bm 0_{k-1})
      & \text{when $\text{mod}_k(t)=0$}\\
      b(\res_{0},\hdots, \res_{k-1})
      & \text{when $\text{mod}_k(t) = 1$}\\
      b(\res_{0},\hdots, \res_{k - j}, \bm 0_{j-1})
      & \text{when
        $\text{mod}_k(t) = j \in
        \{2,\ldots,k-1\}$},
    \end{cases}
  \end{IEEEeqnarray*}
  which completes the proof.
\end{proof}

\subsection{Proof of Proposition~\ref{prop:closedform} }
\begin{proof}[Proof of Proposition~\ref{prop:closedform}]
  The recurrence relation
  \smash{$\alpha_t = c \, [\sum_{i=1}^k \gamma_i (\gamma_i\,
  \alpha_{t-k+i-1})^\delta ]^{1/\delta}$}
can be converted to the order-$k$ homogeneous linear recurrence
relation $y_t = \sum_{i=1}^k c_i \, y_{t-k+i-1}$, where
$\{y_t\} = \{\alpha_{t}^{\delta}\}$ and
$c_i=c^\delta \gamma_i^{1+\delta}$.\ Solving the linear recurrence
relation and transforming the solution to the original sequence
$\{\alpha_t\}$ leads to the claim.
\end{proof}

\subsection{Proof of Proposition~\ref{prop:max_stable}}
\begin{proof}[Proof of Proposition~\ref{prop:max_stable}]
  \begin{description}[wide=0\parindent]
  \item[$(i)$] 
    Because $a_P$ is $1$--homogeneous, $a$ satisfies
    property~\eqref{eq:ams_condition}.\ By definition of the
    right-inverse $V_{\kk}^{\leftarrow}$ and due to Property $K_1$, we
    have that, for all $(\bm y_\kk,q)$ in the domain of
    $V_\kk^\leftarrow$,
    \[
      V_{\kk}\{\bm y_\kk, V_{\kk}^{\leftarrow}(q;\bm y_\kk)\}
      =V_\kk(\bm y_\kk,\infty) \, K_P\{V_\kk^{\leftarrow}(q;\bm
      y_{\kk})/a_P(\bm y_\kk)\} =q.
    \] Hence,
    $V_\kk^{\leftarrow}(q;\bm y_{\kk}) = a_P(\bm y_\kk)\,
    K_P^{\leftarrow}\{q/V_\kk(\bm y_\kk,\infty)\}$.\ Taking
    logarithms, setting $\bm y_\kk=e^{\bm x_\kk}$, and letting
    $q=p^\star\,V_\kk(\bm y_\kk,\infty)$ where $p^\star$ satisfies
    $K_P^\leftarrow(p^\star)=1$ gives
    \begin{IEEEeqnarray*}{rCl} a(\bm x_\kk) &=& \log
      V_\kk^\leftarrow(p^\star\,V_\kk(e^{\bm x_{\kk}},\infty);e^{\bm
        x_{\kk}}) - \log K_P^{\leftarrow}(p^\star).
    \end{IEEEeqnarray*}
  \item[$(ii)$] Since $a$ satisfies property~\eqref{eq:ams_condition},
    then Appendix~\ref{sec:proof_ms} shows that Assumption $\AB$ holds
    with limit distribution $K(x;\bm \res_{\kk})$ given by expression
    \eqref{eq:ms_convergence_example}.\ Using Property $K_1$, we
    further have that under the normalizing functionals $a$ and $b$,
    expression \eqref{eq:ms_convergence_example} simplifies to
    $K(x;\bm \res_{\kk})=K_P(e^x)$ for all $\bm \res_\kk\in\RR^k$.
  \item[$(iii)$] The limit distribution $K(x;\bm \res_{\kk})$ in
    expression \eqref{eq:ms_convergence_example} does not depend on
    $\bm \res_{\kk}$.\ Because $a_P$ is positive, $1$--homogeneous, and
    continuous, $1$ is in the image of $a_P$ and thus, $0$ is in the
    image of $a$.\ Hence, there exists $\bm z_\kk^\star\in\RR^k$ such
    that $a(\bm z_\kk^\star)=0$ and $K_P(e^x)=K(x;\bm z_\kk^\star)$ which
    proves the claim.
  \end{description}
\end{proof} \color{black}

\subsection{Convergence of multivariate normal full conditional
distribution}
\label{sec:kernel_MVN} Let $\bm X_N = (X_{N,0}, \dots, X_{N,k}) \sim
\mathcal{N}(\bm 0_{k+1}, \bm \Sigma)$ where $\bm
\Sigma\in\RR^{(k+1)\times (k+1)}$ is a positive definite correlation
matrix with $(i+1,j+1)$ element $\rho_{ij}$, $i,j=0,\ldots,k$.\ Let
$\bm Q= \bm \Sigma^{-1}$ and write $q_{ij}$ for its $(i+1,j+1)$
element, $i,j=0,\ldots,k$.\ For $k\geq 1$ and $\bm \res_{\kk} \in
\RR^{k}$, the conditional distribution of $X_{N,k}$ given $\bm
X_{N,\kk} = \bm \res_\kk$ is normal with mean $-q_{kk}^{-1}
\sum_{i=0}^{k-1} q_{ik} \,\res_i$ and variance $q_{kk}^{-1}$.\ Let
$\bm X=(X_0,\dots, X_k)$ with $X_i=-\log\{1-\Phi(X_{N,i})\}$, so that
$X_i\sim \text{Exp}(1)$ for $i=0,\dots,k$.\ Following the strategy
outlined in Section~\ref{sec:strategy}, we have that for any $t\geq k
\geq 1$,
\begin{equation} \Pr(X_k < a(\bm X_{\kk}) \mid \bm X_{\kk}= \bm
A_t(\level,\bm \res_{\kk})) =
\Phi\left[q_{kk}^{1/2}\left\{\Phi_{t,\level}^{-1} -
\sum_{i=0}^{k-1}\left(-\frac{q_{ik}}{q_{kk}}\right)\,\Phi_{t-k+i,\level}^{-1}\right\}\right],
  \label{eq:full_conditional_Gaussian}
\end{equation} where, for $i=t-1,\dots, t-k$, $\Phi_{i,\level}^{-1} :=
\Phi^{\leftarrow} \left[1-\exp\left\{-A_{i}(\level,
\res_i)\right\}\right]$ and $\Phi_{t,\level}^{-1} := \Phi^{\leftarrow}
\left\{1-\exp\left(-\left[a\{\bm A_t(\level,\bm \res_{\kk})\}
\right]\right)\right\}$.\

Now, let $t=k$.\ Firstly, we seek to find a function $a$ such that the
conditional probability in equation
\eqref{eq:full_conditional_Gaussian} converges to a number
$p\in(0,1)$.\ Suppose that this function satisfies $a(\bm a_\tk(v))\to
\infty$ as $v\to \infty$.\ Using standard asymptotic series for the
cumulative distribution function of the standard normal distribution,
we have
\begin{IEEEeqnarray*}{rCl} \Phi_{i,\level}^{-1} &=& \{2\,A_{i}(\level,
\res_i)\}^{1/2} - \frac{\log A_{i}(\level, \res_i) + \log
4\pi}{2\{2\,A_{i}(\level,\res_i)\}^{1/2}}+o(A_{i}(\level,\res_i)^{-1/2}),\qquad
i=t-k,\dots, t-1,\\ \Phi_{t,\level}^{-1} &=& \{2\,a(\bm A_{t}(\level,
\bm \res_{\kk}))\}^{1/2} - \frac{\log a(\bm A_{t}(\level, \bm
\res_{\kk})) + \log 4\pi}{2\{2\,a(\bm A_{t}(\bm \res_{\kk}))\}^{1/2}}
+o(a(\bm A_{t}(\level, \bm \res_{\kk}))^{-1/2}),
\end{IEEEeqnarray*} as $v\to \infty$.\ Therefore,
\begin{equation} \Phi_{t,\level}^{-1} -
\sum_{i=0}^{k-1}\left(-\frac{q_{ik}}{q_{kk}}\right)\,\Phi_{t-k+i,\level}^{-1}=
\left(2 \left[a\{\bm A_{t}(\level, \bm
\res_{\kk})\}\right]\right)^{1/2} -
\sum_{i=0}^{k-1}\left(-\frac{q_{ik}}{q_{kk}}\right)
\{2\,A_{t-k+i,\level}( \res_i)\}^{1/2} + o(1),
  \label{eq:Mills_difference}
\end{equation} as $\level\to\infty$.\ Substituting in
\eqref{eq:full_conditional_Gaussian}, we observe that for the choice
of $a$ being $a(\bm y_{\kk}) =
\big\{\sum_{i=0}^{k-1}(-q_{ik}/q_{kk})\,|y_i|^{1/2} \big\}^2$, for
$\bm y_{\kk} \in \RR^k$, the conditions set out in Section
\ref{sec:strategy} are met.\ In particular, due to expression
\eqref{eq:Mills_difference} converging to zero and since $\Phi$ is
continuous, we have the conditional probability
\eqref{eq:full_conditional_Gaussian} converging to $p=1/2$, that is
\[ \lim_{\level\to
\infty}\Phi\left[q_{kk}^{1/2}\left\{\Phi_{t,\level}^{-1} -
\sum_{i=0}^{k-1}\left(-\frac{q_{ik}}{q_{kk}}\right)\,\Phi_{t-k+i,\level}^{-1}\right\}\right]
= 1/2.
\] Using similar asymptotic series, we have that for $b(\bm y_{\kk}) =
a(\bm y_{\kk})^{1/2}$ and any $x_k \in \RR$,
\begin{equation} \lim_{\level \to \infty}\PR\left(X_k < a(\bm
X_{\kk})+b(\bm X_{\kk})\,x_{k}~\Big |~ \bm X_{\kk}= \bm A_t(\level,
\bm \res_\kk)\right)=\Phi\left\{(q_{kk}/2)^{1/2}\,x_k \right\}.
 \label{eq:mvn_conv_proof}
\end{equation} The convergence in \eqref{eq:mvn_conv_proof} holds
uniformly on compact sets in the variable $\bm \res_{\kk}$ by
continuous convergence \citep[see Section 0.1 in][]{resn87}. That is,
expression \eqref{eq:mvn_conv_proof} holds true after replacing $\bm
\res_{\kk}$ by $\bm \res_{\kk}(\level)$ satisfying $\bm
\res_{\kk}(\level) \to \bm \res_{\kk}$ as $\level\to \infty$ and since
the limit function is continuous in $\bm \res_{\kk}$ (constant
function), the argument follows.\ Additionally, we have that for any
$\bm \alpha_{\tk}\in(0,1]^k$,
\begin{IEEEeqnarray*}{rCl} &&\lim_{\level\to\infty}
\level^{-1/2}\,[a(\bm \alpha_{\tk} \, \level+\level^{1/2}\bm
\res_{\kk})-a(\bm \alpha_{\tk} \, \level)] = \nabla a(\bm
\alpha_{\tk}) \cdot \bm \res_{\kk}\\ &&\lim_{\level\to\infty}
\level^{-1/2}\,b(\bm \alpha_{\tk} \, \level+\level^{1/2}\bm
\res_{\kk})=b(\bm \alpha_{\tk}),
\end{IEEEeqnarray*} where both convergences hold uniformly on compact
sets in the variable $\bm \res_{\kk}$ since monotone increasing
functions (in every argument) are converging point-wise to a
continuous limit.\ Thus, Assumption $\AB$ holds true for the special
case $t=k$ with
\[
a_t(\level)=\left\{\sum_{i=0}^{k-1}\left(-\frac{q_{ik}}{q_{kk}}\right)\,\rho_{t-k+i}^{1/2}\right\}^2\,\level
\quad \text{and}\quad b_t(\level)=\level^{1/2}.
\] Finally, observe that the entire argument after expression
\eqref{eq:full_conditional_Gaussian} remains unchanged upon changing
$t=k$ to $t+1 = k+1$.\ The claim is proved through iteration.  

\subsection{Convergence of multivariate inverted logistic full
  conditional distribution}
\label{sec:kernel_inv_log}
The transition probability kernel of this process is given by
expression~\eqref{eq:ims_kernel} with
$V(\bm x)=\lVert \bm x^{-1/\alpha}\rVert^{\alpha}$,
$\bm x \in \RR_+^{k+1}$. For $t\geq k\geq 1$ and
$\bm \res_{\kk} \in \RR_+^{k}$, we have that %
$\Pr(X_k/b(\bm X_{\kk})<1\mid \bm X_\kk=\bm B_{t}(\level, \bm
\res_{\kk}) )$ is equal to
\begin{equation}
  \mathscr{L}\left[v, \bm \res_{\kk}\right]
  \exp\left\{\big\lVert \bm B_{t}(\level,\bm
    x_{\kk})^{1/\alpha} \big\lVert^{\alpha} - \big\lVert (\bm
    B_{t}(\level,\bm \res_{\kk}), b\{\bm B_{t}(\level,\bm \res_{\kk})\})^{1/\alpha}
    \big\lVert^{\alpha}\right\},
  \label{eq:ims_conditional}
\end{equation}
where
$\bm B_{t}(\level,\bm \res_{\kk}) = (B_{t-k}(\level, \res_{0}),\dots,
B_{t-1}(\level,\res_{k-1})) $ and $\mathscr{L}(v, \bm \res_{\kk})=1+o(1)$
for all $\bm \res_{\kk}\in \RR_+^k$ as $v\to\infty$.

Now let $t=k$ and set $z_0 = 1$.\ Firstly, we seek to find a function
$b$ such that the conditional probability in equation
\eqref{eq:ims_conditional} converges to a number $p\in(0,1)$.\ Suppose
that this function satisfies $b(\bm b_{\kk}(\level))\to \infty$ as
$\level\to \infty$ with $b(\bm b_{\kk}(\level)) = o(\level)$.\ Under
this assumption, we have that as $\level \to \infty$,
\begin{IEEEeqnarray*}{rCl}
  \lefteqn{\log \overline{\pi}^{\text{inv}} [\bm B_{t}(\level,\bm
    \res_{\kk}), b\{\bm B(\level,\bm \res_{\kk})\}]=} \\\\ &=& \lVert
  \{\bm B_{t} (\level,\bm \res_{\kk})\}^{1/\alpha} \lVert^\alpha - \lVert
  [\{\bm B_{t}(\level,\bm \res_{\kk})\}^{1/\alpha}, \{b(\bm
  B_{t}(\level,\bm
  \res_{\kk}))\}^{1/\alpha} ]\lVert^\alpha + o(1)\\\\
  &=& \Big(\sum_{i=t-k}^{t-1} B_{i}(\level,\res_{i})^{1/\alpha} + [b\{\bm
  B_{t}(\level,\bm \res_{\kk})\}]^{1/\alpha}\Big)^\alpha - \Big(
  \sum_{i=t-k}^{t-1} B_{i}(\level,\res_{i})^{1/\alpha}\Big)^\alpha + o(1)\\
  &=& \Big(\sum_{i=t-k}^{t-1}
  B_{i}(\level,\res_{i})^{1/\alpha}\Big)^{\alpha}\Big( 1+ \frac{[b\{\bm
    B_{t}(\level,\bm \res_{\kk})\}]^{1/\alpha}} { \sum_{i=t-k}^{t-1}
    B_{i}(\level,\res_{i})^{1/\alpha}}\Big)^\alpha - \Big(
  \sum_{i=t-k}^{t-1} B_{i}(\level,\res_{i})^{1/\alpha}\Big)^\alpha + o(1)\\
  & = & \alpha\, \Big(\sum_{i=t-k}^{t-1}
  B_{i}(\level,\res_{i})^{1/\alpha}\Big)^{-(1-\alpha)}\, [b\{\bm
  B_{t}(\level, \bm \res_{\kk})\}]^{1/\alpha} + o(1).
\end{IEEEeqnarray*}
This expression converges to a positive constant provided
$b(\bm B(\level,\bm \res_{\kk})) = \mathcal{O}\big[\big\{
\sum_{i=0}^{k-1}
B_{t}(\level,\res_{i})^{1/\alpha}\big\}^{\alpha(1-\alpha)}\big]$ as
$\level \to \infty$.\ Hence, choosing $b$ equal to
$b(\bm y) = \lVert \bm y^{1/\alpha}\lVert^{\alpha (1-\alpha)}$,
$\bm y \in \RR_+^{k}$, gives the conditional probability
\eqref{eq:ims_conditional} converging to $p=1-\exp(-\alpha)$, that is
\[
  \pi^{\text{inv}} [\bm B_{t}(\level,\bm \res_{\kk}), b\{\bm
  B(\level,\bm \res_{\kk})\}]\to 1-\exp(-\alpha), \qquad
  \alpha\in(0,1),
\]
and generally, we also have that for any $x_k\in \RR_+$,
\begin{equation}
  \lim_{\level\to \infty}\pi^{\text{inv}}(\bm B(\level,\bm \res_{\kk}),
  b(\bm B(\level,\bm \res_{\kk}))\,\res_k) = 1- \exp(-\alpha x_k^{1/\alpha}).
  \label{eq:inv_log_conv}
\end{equation}

Lastly, we note that the convergence in \eqref{eq:inv_log_conv} holds
uniformly on compact sets in the variable
$\bm x_{\kk}\in [\delta_1,\infty)\times\cdots\times [\delta_k,
\infty)$ by continuous convergence \citep[see Section 0.1
in][]{resn87}. That is, expression \eqref{eq:inv_log_conv} holds true
after replacing $\bm \res_{\kk}$ by $\bm \res_{\kk}(\level)$
satisfying
$\bm \res_{\kk}(\level) \to \bm \res_{\kk}\in [\delta_1,\infty)\times
\cdots\times [\delta_k, \infty)$ as $\level\to \infty$ and since the
limit function is continuous in $\bm \res_{\kk}$ (constant function),
the argument follows.\

Let $\beta_t$ satisfy the recurrence relation
$\log \beta_t = \log (1-\alpha) +
\log\left(\max_{i=1,\ldots,k}\beta_{t-i}\right)$ subject to
$\beta_{i} = 1-\alpha$ for $i=1,\dots,k-1$.\ For all
$\delta_1,\ldots,\delta_k > 0$ and
$\bm \res_{\kk} \in [\delta_1,\infty) \times\ldots \times [\delta_k,
\infty)$,
\[
  v^{-\beta_t}\, b( \bm B_{t}(v,\bm \res_v))\to \psi_{t}^b(\bm \res),
  \quad \text{whenever $\bm \res_v \to \bm \res$ as $v\to \infty$},
\]
where $\psi_t^\scale>0$ is continuous and has the same form as in
Corollary \ref{cor:SDE_AI_scale}.
Thus, Assumption $\BB$ holds for the special case $t=k$ with
$b_t(\level)=\level^{\beta_t}$.

Finally, observe that the entire argument after expression
\eqref{eq:ims_conditional} remains unchanged upon changing $t=k$ to
$t+1 = k+1$.\ The claim is proved through iteration.

\subsection{Convergence of max-stable full conditional distribution -
  no mass on boundary}
\label{sec:proof_ms}
Suppose that $a(\level\bm 1_k)\to \infty$ as $\level\to \infty$.\ Let
$\Pi_{k-1}$ denote the set of partitions of $[k-1]=\{0,\dots,k-1\}$.\
Then, for $\bm \res_{\kk} \in \RR^{k}$ and with some rearrangement,
$\Pr(X_k < a(\bm X_{\kk}) \mid \bm X_{\kk} = \bm A_t(\level,\bm
\res_{\kk}) )$ is equal to
\begin{IEEEeqnarray}{rCl}
  && \frac{1+ \sum_{p \in \Pi_{k-1}\sm [k-1] }(-1)^{\lvert p
      \lvert} \{\prod_{J \in p} V_J(\bm y_{\kplusonekplusone})\}/
    V_{\kk}(\bm y_{\kplusonekplusone})} {1+\sum_{p \in \Pi_{k-1}
      \sm [k-1]}(-1)^{\lvert p\lvert} \{\prod_{J \in p} V_J(\bm
    y_{\kk}, \infty)\}/V_{\kk}(\bm y_{\kk},
    \infty)}\times\frac{V_{\kk}(\bm y_{\kplusonekplusone})}
  {V_{\kk}(\bm y_{\kk}, \infty)}\nonumber\\\nonumber\\
  && \times \exp\left\{V(\bm y_{\kk}, \infty) - V(\bm
    y_{\kplusonekplusone})\right\}
  \label{eq:ms_kernel_ra}
\end{IEEEeqnarray}
where
$\bm y_{\kk} = -1/\log[1-\exp\{- \bm A_t(\level,\bm \res_\kk)\}]$ and
$y_k=-1/\log[1-\exp\{- a(\bm A_t(\level,\bm \res_{\kk}))\}]$.\ Since
$V_J$ is a $-(\vert J \vert+1)$-homogeneous function
\citep{coletawn91}, it follows that
\begin{IEEEeqnarray}{l}
  \frac{\prod_{J \in p}V_J(\bm y_{\kplusonekplusone})}{V_{[k-1]}(\bm
    y_{\kplusonekplusone})} = \mathcal{O}\left(\exp\{(1-\lvert p
    \rvert)\,\level \}\right),\qquad \frac{\prod_{J\in p} V_J(\bm
    y_{\kk}, \infty)}{V_{[k-1]}(\bm y_{\kk}, \infty)} =
  \mathcal{O}\left(\exp\{(1-\lvert p \rvert)\,\level
    \}\right),\label{eq:VderivO1}
\end{IEEEeqnarray}
as $\level\to \infty$.\ Because $\lvert p \rvert \geq 2$ for any
$p\in \Pi_{k-1} \sm [k-1]$, it follows that the first fraction
in expression~\eqref{eq:ms_kernel_ra} converges to unity as
$\level\to \infty$ whereas the homogeneity property of the exponent measure
$V$ also guarantees that the last term in
expression~\eqref{eq:ms_kernel_ra} converges to unity since
$V(\bm y_{\kk}, \infty) = \mathcal{O}\{\exp(-\level)\}$ and
$V(\bm y_{\kplusonekplusone}) = \mathcal{O}\{\exp(-\level)\}$.
This leads to
\begin{equation}
  \Pr(X_k < a(\bm X_{\kk})\mid \bm X_{\kk} =  
  \bm A_t(\level,\bm \res_{\kk})) = \frac{V_{[0\,:\,k-1]}(\bm y_{\kk}, y_k)} {V_{[0\,:\,k-1]}(\bm y_{\kk},
    \infty)}(1+o(1)),
  \label{eq:ms_asequiv}
\end{equation}
as $\level \to \infty$.\ Therefore, for any functional $a$ satisfying
property~\eqref{eq:ams_condition}, we have
\begin{IEEEeqnarray}{rCl}
  \lim_{\level\to \infty}\frac{V_{\kk}(\bm y_{\kk}, y_k)}
  {V_{\kk}(\bm y_{\kk}, \infty)}&=& \ddfrac{V_{\kk} [
    \exp(\bm \res_{\kk}), \exp\{a(\bm \res_{\kk})\} ]}{V_{\kk}
    [\exp\{\bm \res_{\kk},\infty\}]}.
  \label{eq:ms_convergence}
\end{IEEEeqnarray}
Similarly, we see that for $x_k \in \RR$, then
$\Pr(X_k < a(\bm X_{\kk}) + x_k \mid \bm X_{\kk} = \bm A_t(\level,\bm
\res_{\kk}))$ converges to $K$ given by expression
\eqref{eq:ms_convergence_example}.\ The convergence in
\eqref{eq:ms_convergence} holds uniformly on compact sets in the
variable $\bm \res_{\kk}$ by continuous convergence \citep[see Section
0.1 in][]{resn87}. That is, expression \eqref{eq:ms_convergence} holds
true after replacing $\bm \res_{\kk}$ by $\bm \res_{\kk}(\level)$
satisfying $\bm \res_{\kk}(\level) \to \bm \res_{\kk}$ as
$\level\to \infty$ and since the limit function is continuous in
$\bm \res_{\kk}$, the argument follows.\

\subsection{Convergence of logistic full conditional distribution}
\label{sec:logistic_convergence}
Under Assumption $\AB$, expression \eqref{eq:ms_asequiv} implies that
for all $\bm \res_{\kk} \in \RR^{k}$,
\[
  \Pr\left(X_k < a(\bm X_{\kk})\mid \bm X_{\kk}=\bm A_t(\level, \bm
    \res_{\kk})\right)= \left[1 + \frac{\exp\left\{-a(\bm
        y_{\kk})/\alpha\right\}}{\big\lVert \exp(-\bm y_{\kk}/\alpha)
      \big\lVert} \right]^{ \alpha-k} (1+o(1))
\]
as $\level\to\infty$, where
$\bm y_{\kk} = -1/\log[1-\exp\{- \bm A_t(\level, \bm \res_{\kk})\}]$.\
Making the choice of $a$ to be
$a(\bm \res_\kk) = -\alpha \log\,\left\{ \lVert \exp(-\bm
  \res_{\kk}/\alpha)\rVert\right\}$ we see that
\[
  \lim_{\level\to \infty} \Pr(X_k< a(\bm X_{\kk}) \mid \bm X_{\kk} = \bm
  A_\level(\bm \res_{\kk})) = 2^{\alpha - k} \in(0,1),
\]
and more generally, for any $x_k \in \RR$,
\[
  \lim_{\level\to \infty} \Pr(X_k< a(\bm X_{\kk}) + x_k \mid \bm X_{\kk} =
  \bm A_t(\level, \bm \res_{\kk})) = \{1+\exp(-x_k/\alpha)\}^{\alpha-k}.
\]
The limit distribution does not depend on $\bm \res_{\kk}$ since $a$
satisfies property~\eqref{eq:ams_condition}.
\subsection{Convergence of H\"{u}sler--Reiss full conditional
  distribution}
\label{sec:HR_convergence}
\citet[equation~(15)]{wadtawn14} and expression \eqref{eq:ms_asequiv}
imply that for all $\bm \res_{\kk} \in \RR^{k}$,
\begin{IEEEeqnarray*}{rCl}
  \Pr(X_k < a(\bm X_{\kk}) \mid \bm X_{\kk}=\bm A_\level(\bm \res_{\kk}))&=&
  \Phi\left[\tau^{-1}\{a(\bm A_\level(\bm \res_{\kk})) - \mu(\bm A_\level(\bm \res_{\kk}))\}\right] (1+o(1))
\end{IEEEeqnarray*}
as $\level\to\infty$, where $\Phi$ denotes the cumulative distribution
function of the standard normal distribution and
\smash{$\mu(\bm y_{\kk}) = - \tau (\bm K_{01}^\top \bm C \bm K_{10}
  \cdot \bm y_{\kk} + \bm K_{01}^\top \bm \Sigma^{-1}\bm
  1_{k+1}^\top/\bm 1_{k+1}^\top \bm q)$} where
$\tau^{-1}=\bm K_{01}^\top \bm C \bm K_{01}$,
$\bm C=(\bm \Sigma^{-1} - \bm q \bm q^\top/\bm 1_{k+1}^\top \bm q)$
is a $(k+1)\times (k+1)$ matrix of rank $k$,
$\bm q=\bm\Sigma^{-1}\,\bm 1_{k+1}$, and
\[
  \bm{K}_{10} = \left(
    \begin{matrix}
      \bm I_{k}\\
      \bm 0_{1,k}
    \end{matrix}
  \right),\quad
  \bm K_{01} = \left(
    \begin{matrix}
      \bm 0_{k,1}\\
      1
    \end{matrix}
  \right).
\]
Making the choice of $a$ to be
$a(\bm \res_{\kk}) = -\tau \bm K_{01}^\top\bm C \bm K_{10} \cdot \bm
\res_{\kk}$ we see that for any $x_k\in\RR$,
\[
  \lim_{\level\to \infty}\Pr(X_k < a(\bm X_{\kk})+x_k \mid \bm
  X_{\kk}=\bm A_t(\level, \bm \res_{\kk}))=\Phi[\{x_k+\tau (\bm
  K_{01}^\top \bm \Sigma^{-1}\bm 1_{k+1}^\top/\bm 1_{k+1}^\top \bm
  q)\}/\tau].
\]
The limit distribution does not depend on $\bm \res_{\kk}$ since $a$
satisfies property~\eqref{eq:ams_condition}.\ The latter follows from
the properties $\bm K_{10}\cdot \bm 1_{k} = \bm 1_{k+1} - \bm K_{01}$
and $\bm C \cdot \bm 1_{k+1} = \bm 0_{k+1, 1}$ which give
$-\tau \, \bm K_{01}^\top \bm C \bm K_{10} \cdot \bm 1_{k} = 1$.
\bibliographystyle{agsm}

\end{document}